\numberwithin{equation}{section}
\let\originalleft\left
\let\originalright\right
\renewcommand{\left}{\mathopen{}\mathclose\bgroup\originalleft}
\renewcommand{\right}{\aftergroup\egroup\originalright}
\newlength{\bibitemsep}
\newlength{\bibparskip}\setlength{\bibparskip}{0pt}
\let\oldthebibliography\thebibliography
\renewcommand\thebibliography[1]{\oldthebibliography{#1}
  \setlength{\parskip}{\bibitemsep}
  \setlength{\itemsep}{\bibparskip}}
\DeclareMathOperator{\re}{Re}
\DeclareMathOperator{\im}{Im}
\DeclareMathOperator{\Tr}{Tr}
\DeclareMathOperator{\Crt}{Crt}
\DeclareMathOperator{\Id}{Id}
\DeclareMathOperator{\diag}{diag}
\DeclareMathOperator{\supp}{supp}
\DeclareMathOperator{\Spec}{Spec}
\newcommand{\mc}[1]{\mathcal{#1}}
\newcommand{\mf}[1]{\mathfrak{#1}}
\newcommand{\ms}[1]{\mathscr{#1}}
\newcommand{\rd}{{\rm d}}
\newcommand{\ii}{\mathrm{i}}
\renewcommand{\epsilon}{\varepsilon}
\renewcommand{\leq}{\leqslant}
\renewcommand{\geq}{\geqslant}
\renewcommand{\P}{\mathbb{P}}
\newcommand{\E}{\mathbb{E}}
\newcommand{\R}{\mathbb{R}}
\newcommand{\C}{\mathbb{C}}
\newcommand{\N}{\mathbb{N}}
\newcommand{\Z}{\mathbb{Z}}
\newcommand{\abs}[1]{\left\lvert #1 \right\rvert}
\newcommand{\vertiii}[1]{{\left\vert\kern-0.25ex\left\vert\kern-0.25ex\left\vert #1 
    \right\vert\kern-0.25ex\right\vert\kern-0.25ex\right\vert}}
\newcommand{\ip}[1]{\left\langle #1 \right\rangle}
\newcommand{\diff}{\mathop{}\!\mathrm{d}}
\theoremstyle{plain} 
\newtheorem{thm}{Theorem}[section]
\newtheorem{mthm}[thm]{Metatheorem}
\newtheorem{lem}[thm]{Lemma}
\newtheorem{prop}[thm]{Proposition}
\newtheorem{defn}[thm]{Definition}
\newtheorem{example}[thm]{Example}
\newtheorem{rem}[thm]{Remark}
\renewcommand{\section}{\@startsection
{section}
{1}
{0mm}
{-2\baselineskip}
{1\baselineskip}
{\normalfont\large\scshape\centering}} 
\renewcommand{\subsection}{\@startsection
{subsection}
{2}
{0mm}
{-\baselineskip}
{0 \baselineskip}
{\normalfont\bf\itshape}} 
\renewcommand{\subsubsection}{\@startsection
{subsubsection}
{3}
{0mm}
{-\baselineskip}
{0 \baselineskip}
{\normalfont\bf\itshape}} 
\def\author#1{\par
    {\centering{\authorfont#1}\par\vspace*{0.05in}}
}
\def\titlefont{\fontsize{13}{15}\bfseries\boldmath\selectfont\centering{}}
\def\authorfont{\fontsize{13}{15}}
\let\affiliationfont\rhfont
\def\address#1{\par
    {\centering{\affiliationfont#1\par}}\par\vspace*{11pt}
}
\def\title#1{
    \thispagestyle{plain}
    \vspace*{-14pt}
    \vskip 79pt
    {\centering{\titlefont #1\par}}%
    \vskip 1em
}
\begin{document}

~\vspace{-1.4cm}

\title{Landscape Complexity Beyond Invariance and the Elastic Manifold}

\vspace{1cm}
\noindent

\begin{minipage}[c]{0.33\textwidth}
 \author{G\'{e}rard Ben Arous}%
\address{Courant Institute\\%
   New York University\\%
   E-mail: benarous@cims.nyu.edu}%
 \end{minipage}%
 \begin{minipage}[c]{0.33\textwidth}
 \author{Paul Bourgade}%
\address{Courant Institute\\%
   New York University\\%
   E-mail: bourgade@cims.nyu.edu}%
 \end{minipage}%
\begin{minipage}[c]{0.33\textwidth}
 \author{Benjamin M\textsuperscript{c}Kenna}%
\address{Courant Institute\\%
   New York University\\%
   E-mail: mckenna@cims.nyu.edu}%
 \end{minipage}%

\begin{abstract}
This paper characterizes the annealed, topological complexity (both of total critical points and of local minima) of the elastic manifold. This classical model of a disordered elastic system captures point configurations with self-interactions in a random medium. We establish the simple-vs.-glassy phase diagram in the model parameters, with these phases separated by a physical boundary known as the Larkin mass, confirming formulas of Fyodorov and Le Doussal. 

One essential, dynamical, step of the proof also applies to  a general signal-to-noise model of soft spins in an anisotropic well, for which we prove a negative-second-moment threshold distinguishing positive from zero complexity. A universal near-critical behavior appears within this phase portrait, namely quadratic near-critical vanishing of the complexity of total critical points, and cubic near-critical vanishing of the complexity of local minima.

These two models serve as a paradigm of complexity calculations for Gaussian landscapes exhibiting few distributional symmetries, i.e. beyond the invariant setting. The two main inputs for the proof are determinant asymptotics for non-invariant random matrices from our companion paper \cite{BenBouMcK2021I}, and  the atypical convexity and integrability of the limiting variational problems.
\end{abstract}

{
	\hypersetup{linkcolor=black}
	\tableofcontents
}


\section{Introduction}


\subsection{Complexity of the landscape of disordered elastic systems.}\ \label{sec:1.1}
The elastic manifold is a paradigmatic representative of the class of disordered elastic systems. These are surfaces with rugged shapes resulting from a competition between random spatial impurities (preferring disordered configurations), on the one hand, and elastic self-interactions (preferring ordered configurations), on the other. The model is defined through its Hamiltonian \eqref{eqn:fld_hamiltonian}; for example, a one-dimensional such surface is a polymer; a $d$-dimensional such surface could describe the interface between ordered phases with opposite signs in a $(d+1)$-dimensional Ising model.  Among other motivations, the elastic manifold is interesting because it displays a (de)pinning phase transition,  which is a certain nonlinear response to a driving force: if one applies an external force to the surface at zero-temperature equilibrium, then the surface moves if and only if the force is above the depinning threshold.
The elastic manifold also has a long history as a testing ground for new approaches, for example for fixed $d$ by Fisher using functional renormalization group methods \cite{Fis1986}, and  in the high-dimensional limit by M{\'e}zard and Parisi using the replica method \cite{MezPar1991}. 

In the same diverging dimension regime, we study the energy landscape of this model,  through the expected number of configurations that locally minimize the Hamiltonian against small perturbations.  We also count the expected number of critical configurations.   Our main result,  Theorem \ref{thm:betterFLDcomplexity}, gives the phase diagram in the model parameters, and identifies the boundary between simple and glassy phases as a physical parameter known as the Larkin mass, which appears in the (de)pinning theory, confirming recent formulas by Fyodorov and Le Doussal \cite{FyoLeD2020}. 

The proof proceeds by dimension reduction and naturally leads to analyzing
a generalization of the zero-dimensional elastic manifold. The original zero-dimensional elastic manifold is
\begin{equation}
\label{eqn:fyodorov2004model}
    \mc{H}_N(x) = V_N(x) + \frac{\mu}{2}\|x\|^2,
\end{equation}
where $V_N : \R^N \to \R$ is an isotropic Gaussian field and $\mu > 0$. This has been studied by Fyodorov as a toy model of a disordered system; it admits a continuous phase transition between order for large $\mu$ and disorder for small $\mu$ \cite{Fyo2004}.  We replace the parabolic well confinement $\frac{\mu}{2}\|x\|^2$ with any positive definite quadratic form $\frac{1}{2}\ip{x,D_Nx}$, to see how different signal strengths in different directions affect the complexity; this defines the model of {\it  soft spins in an anisotropic well}.  Theorem \ref{thm:softspins_threshold} identifies a simple scalar parameter distinguishing between positive and zero complexity in high dimension, namely the negative second moment of the limiting empirical measure of $D_N$. We also find that the near-critical decay of complexity is described by universal exponents: quadratic for total critical points, and cubic for minima. 

Our work is part of the landscape complexity research program, which was initially developed for a variety of functions which are invariant under large classes of isometries (see Section \ref{sec:rotationallyinvariantmodels}). 
We address landscapes lacking this property, which we call ``non-invariant.'' The elastic manifold model is a proof of concept for our general approach,  which relies on
the Kac-Rice formula to reduce complexity to the calculation of the determinant of random matrices,  and on our  companion paper \cite{BenBouMcK2021I} for such determinant asymptotics for random matrix ensembles which are not invariant under orthogonal conjugacy.
This gives variational formulas for the annealed complexity such as Theorem \ref{thm:FLDcomplexity} for the elastic manifold.

Such variational problems associated to high dimensional Gaussian fields are not solvable in general (see e.g.  the companion paper \cite{McK2021} about bipartite spherical spin glasses).  However, for the elastic manifold,  a key convexity property inherited from the associated Matrix Dyson Equation (see Proposition \ref{prop:FLDconcavity}) reduces the dimension of the relevant variational formula,  mapping the problem to the complexity of the soft spins in an anisotropic well model for a specific $D_N$. We then find integrable dynamics to analyze the variational problems associated to the general soft  spins in an anisotropic well model,  and obtain the complexity thresholds mentioned above.


\subsection{Determinants and the Kac-Rice formula.}\
As mentioned  in the previous section, the Kac-Rice formula provides a bridge between random geometry and random matrix theory. If $f$ is a Gaussian field with enough regularity on a nice compact manifold $\mc{M}$, and if $\Crt_f(t,k)$ denotes the number of critical points of $f$ of index $k$ at which $f \leq t$, then this formula reads
\[
    \E[\Crt_f(t,k)] = \int_{\mc{M}} \E\left[ \left. \abs{\det(\nabla^2 f(\sigma)} \mathbf{1}\{f(\sigma) \leq t, i(\nabla^2f(\sigma)) = k\} \right| \nabla f(\sigma) = 0 \right] \phi_\sigma(0) \diff \sigma.
\]
Here $i(\cdot)$ is the index and $\phi_\sigma(0)$ is the density of $\nabla f(\sigma)$ at $0$. In the models of this paper, we will always take $\mc{M}$ to be the whole Euclidean space (with the necessary arguments to account for non-compactness). Thus the Kac-Rice formula transforms questions about critical points into questions about the (conditional) determinant of the random matrix $\nabla^2 f(\sigma)$. For an introduction to the Kac-Rice formula, we direct the reader to \cite{AdlTay2007, AzaWsc2009}. In a digestible special case, if $\Crt_f$ is the total number of critical points of $f$, then 
\begin{equation}
\label{eqn:kacrice}
    \E[\Crt_f] = \int_{\mc{M}} \E\left[ \left. \abs{\det(\nabla^2 f(\sigma)} \right| \nabla f(\sigma) = 0 \right] \phi_\sigma(0) \diff \sigma.
\end{equation}
In one dimension, this formula dates back to the 1940s \cite{Kac1943, Ric1944}. For many years it was used for small, fixed dimension in applications such as signal processing \cite{Ric1945} and oceanography \cite{Lon1957}. For more modern results in fixed dimension, we refer the reader to \cite{AzaDel2019}.


\subsection{Rotationally invariant models.}\
\label{sec:rotationallyinvariantmodels}
In a breakthrough insight, \cite{Fyo2004} used the Kac-Rice formula in \emph{diverging} dimension, to study \emph{asymptotic} counts of critical points via \emph{asymptotics} of random determinants. For example, if $f = f_N$ in the above discussion is defined on an $N$-dimensional manifold, one attempts to compute $\lim_{N \to \infty} \frac{1}{N}\log\E[\Crt_{f_N}]$. The papers \cite{Fyo2004} and \cite{FyoWil2007} studied isotropic Gaussian fields in radially symmetric confining potentials; the centered isotropic case without confining potentials (but in finite volume) was treated in \cite{BraDea2007}. Work has been done in the mathematics and physics literature on complexity for spherical $p$-spin models, starting with \cite{AufBenCer2013} (for pure models) and \cite{AufBen2013} (for mixtures). Similar techniques were used to understand the spiked-tensor model in \cite{BenMeiMonNic2019}. Intricate questions, such as the number of critical points with fixed index at given overlap from a minimum, are considered for pure $p$-spin models in \cite{Ros2020}. We also mention \cite{FanMeiMon2021} for an upper bound on the number of critical points of the TAP free energy of the Sherrington-Kirkpatrick model, and the recent works \cite{BasKeaMezNaj2020, BasKeaMezNaj2021} on neural networks, \cite{AufZen2020} on Gaussian fields with isotropic increments, \cite{BenFyoKho2020} on stable/unstable equilibria in systems of non-linear differential equations, and \cite{BelCerNakSch2021} on mixed spherical spin glasses with a deterministic external field. In most of these models, the conditioned Hessian is closely related to the Gaussian Orthogonal Ensemble (GOE), a consequence of distributional symmetries of the landscapes.

The above results handle the average number of critical points. It is another question entirely to prove concentration, i.e. to show that the \emph{average} (annealed) number of points is also \emph{typical} (quenched). Proving concentration typically involves intricate second-moment computations, which are also possible via the Kac-Rice formula, but which involve determinant asymptotics for a pair of (usually correlated) random matrices. To our knowledge this has only been carried out for $p$-spin models, both for pure models \cite{Sub2017, AufGol2020} and for certain mixtures which are close to pure \cite{BenSubZei2020}. The quenched asymptotics are not always expected to match the annealed ones; for more intricate questions in pure $p$-spin, physical computations based on the replica trick suggest a qualitative picture of this failure \cite{RosBenBirCam2019, RosBirCam2019}. 


\subsection{Non-invariant models.}\
In many models of interest, it happens that the law of the conditioned Hessian in \eqref{eqn:kacrice} does not depend on $\sigma$, and that it has long-range correlations induced by a fixed (not depending on $N$) number $m$ of independent Gaussian random variables. For example, this law might match that of $W_N + \xi \Id$, where $W_N$ is symmetric with independent Gaussian entries with a variance profile or large zero blocks, and $\xi \sim \mc{N}(0,\tfrac{1}{N})$ is independent of $W_N$; the resulting matrix has ``long-range correlations'' because the diagonal entries are all correlated with each other, and $m = 1$ because these correlations are induced by $\xi \in \R^1$. In these models, by integrating over this small number of variables last, the difficult term in the Kac-Rice formula \eqref{eqn:kacrice} takes the form
\begin{equation}
\label{eqn:overallproblem}
    \int_{\R^m} e^{-N\frac{\|u\|^2}{2}} \E[\abs{\det(H_N(u))}] \diff u
\end{equation}
for some Gaussian random matrices $H_N(u)$ which may be far from GOE. (In the example above, $H_N(u) = W_N + u\Id$.) 

The problem then reduces to the exponential asymptotics of \eqref{eqn:overallproblem}.
In the companion paper \cite{BenBouMcK2021I}, we establish two types of results about \eqref{eqn:overallproblem}. First, we show asymptotics for a single matrix of the form
\begin{equation}
\label{eqn:informal_detcon}
    \E[\abs{\det(H_N(u))}] = \exp\left(N \int_\R \log\abs{\lambda} \mu_N(u,\diff \lambda) + o(N) \right).
\end{equation}
Here the deterministic probability measures $\mu_N(u) = \mu_N(u,\cdot)$ come from the theory of the \emph{Matrix Dyson Equation} (MDE), developed in the random-matrix literature by Erd\H{o}s and co-authors in the last several years. Second, after this identification, \eqref{eqn:overallproblem} looks like a Laplace-type integral (with error terms), but the measures $\mu_N$ depend on $N$, meaning \eqref{eqn:overallproblem} may take the form $\int_{\R^m} e^{Nf_N(u)} \diff u$ instead of the more-desirable $\int_{\R^m} e^{Nf(u)} \diff u$. In \cite{BenBouMcK2021I}, we show that -- \emph{assuming} the limits $\mu_N(u) \to \mu_\infty(u)$ exist -- the Laplace method can be carried out on \eqref{eqn:overallproblem}. 

In this paper we discuss how to identify the limits $\mu_N(u) \to \mu_\infty(u)$ for the elastic manifold and soft spins in an anisotropic well (a third model is treated in the companion paper \cite{McK2021}). This is model-dependent, although we identify some common techniques. This leads to the following informal statement:

\begin{mthm}
\label{thm:metathm}
Let $\mc{M}_N$ be a nice sequence of $N$-dimensional manifolds, and let $f_N : \mc{M}_N \to \R$ be a sequence of Gaussian random landscapes with the properties discussed above (namely, the law of the conditioned Hessian is independent of the basepoint on $\mc{M}_N$, and long-range correlations are induced by $m$ independent variables). If the limiting empirical measures $\mu_\infty(u)$ can be identified and some regularity established in $u$ (and we present models where this is possible), then
\begin{equation}
\label{eqn:metathm_variational}
    \lim_{N \to \infty} \frac{1}{N} \E[\Crt_{f_N}] = \sup_{u \in \R^m} \left\{ \int_\R \log\abs{\lambda} \mu_\infty(u,\diff \lambda) - \frac{\|u\|^2}{2}\right\} + \text{simpler non-variational term}.
\end{equation}
The non-variational term comes from the density of the gradient in the Kac-Rice formula: precisely, it is equal to $\lim_{N \to \infty} \frac{1}{N} \log \int_{\mc{M}_N} \phi_\sigma(0) \diff \sigma$, which is typically easy to calculate.
\end{mthm}

We also wish to count local minima, for which the analogue of \eqref{eqn:overallproblem} is
\[
    \int_{\mf{D}} e^{-N\frac{\|u\|^2}{2}} \E[\abs{\det(H_N(u))} \mathbf{1}_{H_N(u) \geq 0}] \diff u.
\]
If we define the set
\[
    \mc{G} = \{u \in \R^m : \mu_\infty(u)((-\infty,0)) = 0\}
\]
of good $u$ values for which $\{H_N(u) \geq 0\}$ is a likely event, then the upshot is that at exponential scale we have
\begin{equation}
\label{eqn:overviewminima}
    \E[\abs{H_N(u)}\mathbf{1}_{H_N(u) \geq 0}] \approx \begin{cases} \E[\abs{H_N(u)}] & \text{if } u \in \mc{G}, \\ 0 & \text{otherwise.} \end{cases}
\end{equation}
(All the matrices $H_N(u)$ we encounter have asymptotically no outliers; otherwise, large-deviations estimates for edge eigenvalues would impact the final result.) This gives an analogue of Metatheorem \ref{thm:metathm} for the complexity of local minima, where the variational problem is restricted to a supremum over $u \in \mc{G}$ instead of $u \in \R^m$. Again, the argument was presented in \cite{BenBouMcK2021I} assuming the existence of limits $\mu_N(u) \to \mu_\infty(u)$; in this paper we verify this assumption.

The goal of this paper is to carry out this program for the elastic manifold and the anisotropic soft spins model, yielding precise versions of Metatheorem \ref{thm:metathm} and its analogue for minima. In fact, for these particular models the variational problem in \eqref{eqn:metathm_variational} turns out to be integrable, as mentioned at the end of Section \ref{sec:1.1}: By introducing a dynamic version of the optimization \eqref{eqn:metathm_variational}, we can distinguish regimes of positive and zero complexity. In addition, we can study near-critical behavior at this phase transition, showing that complexity of total critical points tends to zero quadratically, whereas complexity of local minima tends to zero cubically. These critical exponents were already known for certain models \cite{Fyo2004, FyoWil2007}; we show their universality by extending substantially the class of models exhibiting these quadratic and cubic transitions.

We state our main results in Section \ref{sec:complexity}. Section \ref{sec:stability} provides techniques that will be shared across models, showing how the (well-established) stability theory of the MDE allows one to replace $\mu_N(u)$ by $\mu_\infty(u)$ as discussed above, if one has a candidate $\mu_\infty$. In the remaining sections, we propose candidates for $\mu_\infty$ and carry out this program for each of our models in turn. In the Appendix, we prove a result in free probability  necessary to identify near-critical complexity of our models, and possibly of independent interest: The free convolution of any (compactly supported) measure with the semicircle law decays at least as quickly as a square root at its extremal edges. 


\subsection*{Notations.}\
We write $\|\cdot\|$ for the operator norm on elements of $\C^{N \times N}$ induced by Euclidean distance on $\C^N$, and if $\mc{S} : \C^{N \times N} \to \C^{N \times N}$, we write $\|\mc{S}\|$ for the operator norm induced by $\|\cdot\|$. We let
\[
    \|f\|_{\text{Lip}} = \sup_{x \neq y} \abs{\frac{f(x)-f(y)}{x-y}}
\]
for test functions $f : \R \to \R$, and consider the following three distances on probability measures on $\R$ (called bounded-Lipschitz, Wasserstein-$1$, and L\'evy, respectively):
\begin{align*}
    d_{\textup{BL}}(\mu,\nu) &= \sup\left\{\abs{\int_\R f \diff (\mu - \nu)} : \|f\|_{\text{Lip}} + \|f\|_{L^\infty} \leq 1\right\}, \\
    {\rm W}_1(\mu,\nu) &= \sup\left\{\abs{\int_\R f \diff (\mu - \nu)} : \|f\|_{\text{Lip}} \leq 1\right\}, \\
    d_{\textup{L}}(\mu,\nu) &= \inf\{\epsilon > 0 : \mu((-\infty,x-\epsilon]) - \epsilon \leq \nu((-\infty,x]) \leq \mu((-\infty,x+\epsilon])+\epsilon \text{ for all } x\}.
\end{align*}
We will need the semicircle law of variance $t$, which we write as
\[
    \rho_{\text{sc},t}(\diff x) = \frac{\sqrt{4t-x^2}}{2\pi t} \, \mathbf{1}_{x \in [-2\sqrt{t},2\sqrt{t}]} \diff x,
\]
as well as the abbreviation $
    \rho_{\text{sc}} = \rho_{\text{sc},1}$
for the usual semicircle law supported in $[-2,2]$. We write $\mathtt{l}(\mu)$ for the left edge (respectively, $\mathtt{r}(\mu)$ for the right edge) of a compactly supported measure $\mu$. For an $N \times N$ Hermitian matrix $M$, we write $\lambda_{\min{}}(M) = \lambda_1(M) \leq \cdots \leq \lambda_N(M) = \lambda_{\max{}}(M)$ for its eigenvalues and 
\[
    \hat{\mu}_M = \frac{1}{N}\sum_{i=1}^N \delta_{\lambda_i(M)}
\]
for its empirical measure. We write $\odot$ for the entrywise (i.e., Hadamard) product of matrices, and $\boxplus$ for the free (additive) convolution of probability measures. Given a matrix $T$, we write $\diag(T)$ for the diagonal matrix of the same size obtained by setting all off-diagonal entries to zero. In equations, we sometimes identify diagonal matrices with vectors of the same size. We write $B_R$ for the ball of radius $R$ about zero in the relevant Euclidean space. We use $(\cdot)^T$ for the matrix transpose, which should be distinguished both from $(\cdot)^\ast$ for the matrix conjugate transpose, and from $\Tr(\cdot)$ for the matrix trace.

Unless stated otherwise, $z$ will always be a complex number in the upper half-plane $\mathbb{H} = \{z \in \C : \im(z) > 0\}$, and we always write its real and imaginary parts as $z = E+\ii\eta$. 


\subsection*{Acknowledgements.}\
We wish to thank L\'{a}szl\'{o} Erd\H{o}s and Torben Kr\"{u}ger for many helpful discussions about the MDE for block random matrices, in particular for communicating arguments that became the proofs of Lemmas \ref{lem:kroneckermde_Ldbound} and \ref{lem:erdoskruger}. We are also grateful to Yan Fyodorov and Pierre Le Doussal for bringing the elastic-manifold problem to our attention, and for helpful discussions, for which we also thank Krishnan Mody, Eliran Subag, and Ofer Zeitouni. GBA acknowledges support by the Simons Foundation collaboration Cracking the Glass Problem, PB was supported by NSF grant DMS-1812114 and a Poincar\'e chair, and BM was supported by NSF grant DMS-1812114.


\section{Main results}
\label{sec:complexity}


\subsection{Elastic manifold.}\
\label{subsec:elasticmanifold}
Fix positive integers $L$ (``length'') and $d$ (``internal dimension''), positive numbers $\mu_0$ (``mass'') and $t_0$ (``interaction strength''), and write $\Omega$ for the lattice $\llbracket 1, L \rrbracket^d \subset \Z^d$, understood periodically. Let $V_N$ be a centered Gaussian field on $\R^N \times \Omega$ with 
\[
    \E[V_N(y_1,x_1)V_N(y_2,x_2)] = N B\left(\frac{\|y_1-y_2\|^2}{N}\right) \delta_{x_1,x_2},
\]
for some function $B : \R_+ \to \R_+$ called the correlator. Schoenberg characterized all possible such correlators \cite[Theorem 2]{Sch1938} (see also \cite{Yag1957}); $B$ must have the representation
\begin{equation}
\label{eqn:schoenberg}
    B(x) = c_0 + \int_0^\infty \exp(-t^2x) \nu(\diff t)
\end{equation}
for some $c_0 \geq 0$ and some finite non-negative measure $\nu$ on $(0,\infty)$. In particular $B$ is infinitely differentiable and non-increasing on $(0,\infty)$. We assume that $B$ is also four times differentiable at zero, which implies via Kolmogorov's criterion that each $V_N(\cdot,x)$ is almost surely twice differentiable. We will also assume
\[
    0 < |B^{(i)}(0)| \qquad \text{for } i = 0, 1, 2,
\]
which should be interpreted as a non-degeneracy condition on the field ($i = 0$), its gradient ($i = 1$), and its Hessian ($i = 2$). This is a very mild assumption; indeed it holds by dominated convergence as soon as the measure $\nu$ in \eqref{eqn:schoenberg} has a finite fourth moment and is not the zero measure.

To each deterministic function $\mathbf{u} : \Omega \to \R^N$ (``point configuration,'' but sometimes ``manifold'' after the continuous analogue) associate the random Hamiltonian
\begin{equation}
\label{eqn:fld_hamiltonian}
    \mc{H}[\mathbf{u}] = \sum_{x,y \in \Omega} (\mu_0 \Id - t_0\Delta)_{xy} \ip{\mathbf{u}(x), \mathbf{u}(y)} + \sum_{x \in \Omega} V_N(\mathbf{u}(x),x).
\end{equation}
Here $\Delta \in \R^{L^d \times L^d}$ is the (periodic) lattice Laplacian on $\Omega$, so the $(x,y)$ entry of $\mu_0 \Id - t_0 \Delta$ is given by
\[
	(\mu_0 \Id - t_0\Delta)_{xy} = \mu_0\delta_{x = y} - t_0(\delta_{x \sim y} - 2d\delta_{x = y}),
\]
where $x \sim y$ means that $x$ and $y$ are lattice neighbors. (Following \cite{FyoLeD2020}, our Laplacian is a negative sign off from the typical mathematical convention.)

Notice that the different energies compete: If the disorder $V_N$ vanished in \eqref{eqn:fld_hamiltonian}, then since $\mu_0\Id$ and $-t_0\Delta$ are both positive semidefinite, the ground-state configuration would be the flat one $\mathbf{u} \equiv 0$. On the other hand, the disorder $V_N$ prefers certain random configurations; the interaction $-t_0\Delta$ prefers to keep these configurations from becoming too jagged; and the confinement $\mu_0$ prefers to keep them close to the origin. See Figure \ref{fig:elasticmanifold} for a graphical interpretation.

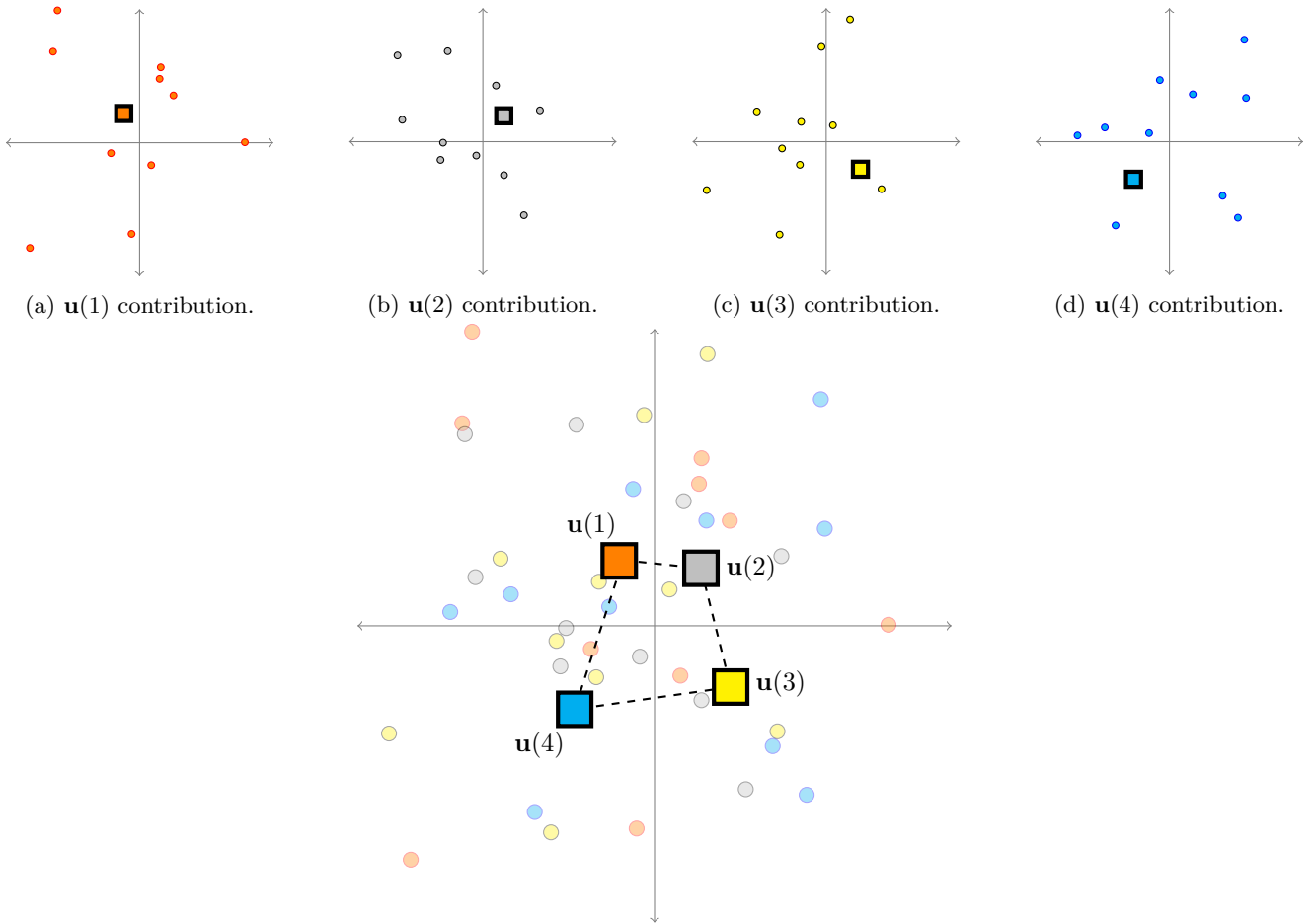
\begin{figure}[h!]
\centering
    \begin{subfigure}{0.22\textwidth}
        \centering
        \begin{tikzpicture}[scale=0.45]
            \draw[<->,gray] (-4,0) -- (4,0);
            \draw[<->,gray] (0,-4) -- (0,4);
            \draw [red, fill=orange] (-0.240559, -2.73191) circle [radius=0.1];
            \draw [red, fill=orange] (-3.28216, -3.15222) circle [radius=0.1];
            \draw [red, fill=orange] (-2.45617, 3.96398) circle [radius=0.1];
            \draw [red, fill=orange] (3.15055, 0.0142884) circle [radius=0.1];
            \draw [red, fill=orange] (-0.857254, -0.312903) circle [radius=0.1];
            \draw [red, fill=orange] (0.633215, 2.25918) circle [radius=0.1];
            \draw [red, fill=orange] (1.01345, 1.41645) circle [radius=0.1];
            \draw [red, fill=orange] (-2.58943, 2.72889) circle [radius=0.1];
            \draw [red, fill=orange] (0.598993, 1.91382) circle [radius=0.1];
            \draw [red, fill=orange] (0.348252, -0.6715046) circle [radius=0.1];
            \draw [black, ultra thick, fill=orange] (-0.7,1.1) rectangle (-0.25,0.65);
        \end{tikzpicture}
        \caption{$\mathbf{u}(1)$ contribution.}
    \end{subfigure}
    \hfill
    \begin{subfigure}{0.22\textwidth}
        \centering
        \begin{tikzpicture}[scale=0.45]
            \draw[<->,gray] (-4,0) -- (4,0);
            \draw[<->,gray] (0,-4) -- (0,4);
            \draw [black, fill=lightgray] (0.392661, 1.67969) circle [radius=0.1];
            \draw [black, fill=lightgray] (-1.19199, -0.0299144) circle [radius=0.1];
            \draw [black, fill=lightgray] (1.22834, -2.20258) circle [radius=0.1];
            \draw [black, fill=lightgray] (0.632006, -1.00243) circle [radius=0.1];
            \draw [black, fill=lightgray] (-2.41178, 0.655762) circle [radius=0.1];
            \draw [black, fill=lightgray] (-1.05262, 2.70988) circle [radius=0.1];
            \draw [black, fill=lightgray] (1.70791, 0.938488) circle [radius=0.1];
            \draw [black, fill=lightgray] (-2.55408, 2.58293) circle [radius=0.1];
            \draw [black, fill=lightgray] (-0.196709, -0.415952) circle [radius=0.1];
            \draw [black, fill=lightgray] (-1.26783, -0.546373) circle [radius=0.1];
            \draw [black, ultra thick, fill=lightgray] (0.4,1) rectangle (0.85,0.55);
        \end{tikzpicture}
        \caption{$\mathbf{u}(2)$ contribution.}
    \end{subfigure}
    \hfill
    \begin{subfigure}{0.22\textwidth}
        \centering
        \begin{tikzpicture}[scale=0.45]
            \draw[<->,gray] (-4,0) -- (4,0);
            \draw[<->,gray] (0,-4) -- (0,4);
            \draw [black, fill=yellow] (0.714484, 3.6619) circle [radius=0.1];
            \draw [black, fill=yellow] (-3.57512, -1.45147) circle [radius=0.1];
            \draw [black, fill=yellow] (-1.39413, -2.78396) circle [radius=0.1];
            \draw [black, fill=yellow] (0.202412, 0.489715) circle [radius=0.1];
            \draw [black, fill=yellow] (-0.749156, 0.593857) circle [radius=0.1];
            \draw [black, fill=yellow] (1.6556, -1.42229) circle [radius=0.1];
            \draw [black, fill=yellow] (-2.07423, 0.905042) circle [radius=0.1];
            \draw [black, fill=yellow] (-0.140773, 2.84032) circle [radius=0.1];
            \draw [black, fill=yellow] (-1.32014, -0.20337) circle [radius=0.1];
            \draw [black, fill=yellow] (-0.785649, -0.692477) circle [radius=0.1];
            \draw [black, ultra thick, fill = yellow] (0.8, -0.6) rectangle (1.25,-1.05);
        \end{tikzpicture}
        \caption{$\mathbf{u}(3)$ contribution.}
    \end{subfigure}
    \hfill
    \begin{subfigure}{0.22\textwidth}
        \centering 
        \begin{tikzpicture}[scale=0.45]
            \draw[<->,gray] (-4,0) -- (4,0);
            \draw[<->,gray] (0,-4) -- (0,4);
            \draw [blue, fill=cyan] (2.29236,1.30946) circle [radius=0.1];
            \draw [blue, fill=cyan] (-1.61325,-2.50894) circle [radius=0.1];
            \draw [blue, fill=cyan] (-0.288295,1.84407) circle [radius=0.1];
            \draw [blue, fill=cyan] (-0.610573, 0.257208) circle [radius=0.1];
            \draw [blue, fill=cyan] (2.23916, 3.05355) circle [radius=0.1];
            \draw [blue, fill=cyan] (0.698387, 1.4187) circle [radius=0.1];
            \draw [blue, fill=cyan] (1.59153, -1.62097) circle [radius=0.1];
            \draw [blue, fill=cyan] (-1.93492, 0.424784) circle [radius=0.1];
            \draw [blue, fill=cyan] (-2.75064, 0.18661) circle [radius=0.1];
            \draw [blue, fill=cyan] (2.04872, -2.2776) circle [radius=0.1];
            \draw [black, ultra thick, fill=cyan] (-1.3,-0.9) rectangle (-0.85, -1.35);
        \end{tikzpicture}
        \caption{$\mathbf{u}(4)$ contribution.}
    \end{subfigure}
    \\ \vfill
    \centering
    \begin{subfigure}{0.8\textwidth}
        \centering
        \begin{tikzpicture}
            \draw[<->,gray] (-4,0) -- (4,0);
            \draw[<->,gray] (0,-4) -- (0,4);
            \draw [blue, fill=cyan, opacity = 0.35] (2.29236,1.30946) circle [radius=0.1];
            \draw [blue, fill=cyan, opacity = 0.35] (-1.61325,-2.50894) circle [radius=0.1];
            \draw [blue, fill=cyan, opacity = 0.35] (-0.288295,1.84407) circle [radius=0.1];
            \draw [blue, fill=cyan, opacity = 0.35] (-0.610573, 0.257208) circle [radius=0.1];
            \draw [blue, fill=cyan, opacity = 0.35] (2.23916, 3.05355) circle [radius=0.1];
            \draw [blue, fill=cyan, opacity = 0.35] (0.698387, 1.4187) circle [radius=0.1];
            \draw [blue, fill=cyan, opacity = 0.35] (1.59153, -1.62097) circle [radius=0.1];
            \draw [blue, fill=cyan, opacity = 0.35] (-1.93492, 0.424784) circle [radius=0.1];
            \draw [blue, fill=cyan, opacity = 0.35] (-2.75064, 0.18661) circle [radius=0.1];
            \draw [blue, fill=cyan, opacity = 0.35] (2.04872, -2.2776) circle [radius=0.1];
    
            \draw [red, fill=orange, opacity = 0.35] (-0.240559, -2.73191) circle [radius=0.1];
            \draw [red, fill=orange, opacity = 0.35] (-3.28216, -3.15222) circle [radius=0.1];
            \draw [red, fill=orange, opacity = 0.35] (-2.45617, 3.96398) circle [radius=0.1];
            \draw [red, fill=orange, opacity = 0.35] (3.15055, 0.0142884) circle [radius=0.1];
            \draw [red, fill=orange, opacity = 0.35] (-0.857254, -0.312903) circle [radius=0.1];
            \draw [red, fill=orange, opacity = 0.35] (0.633215, 2.25918) circle [radius=0.1];
            \draw [red, fill=orange, opacity = 0.35] (1.01345, 1.41645) circle [radius=0.1];
            \draw [red, fill=orange, opacity = 0.35] (-2.58943, 2.72889) circle [radius=0.1];
            \draw [red, fill=orange, opacity = 0.35] (0.598993, 1.91382) circle [radius=0.1];
            \draw [red, fill=orange, opacity = 0.35] (0.348252, -0.6715046) circle [radius=0.1];
    
            \draw [black, fill=yellow, opacity = 0.35] (0.714484, 3.6619) circle [radius=0.1];
            \draw [black, fill=yellow, opacity = 0.35] (-3.57512, -1.45147) circle [radius=0.1];
            \draw [black, fill=yellow, opacity = 0.35] (-1.39413, -2.78396) circle [radius=0.1];
            \draw [black, fill=yellow, opacity = 0.35] (0.202412, 0.489715) circle [radius=0.1];
            \draw [black, fill=yellow, opacity = 0.35] (-0.749156, 0.593857) circle [radius=0.1];
            \draw [black, fill=yellow, opacity = 0.35] (1.6556, -1.42229) circle [radius=0.1];
            \draw [black, fill=yellow, opacity = 0.35] (-2.07423, 0.905042) circle [radius=0.1];
            \draw [black, fill=yellow, opacity = 0.35] (-0.140773, 2.84032) circle [radius=0.1];
            \draw [black, fill=yellow, opacity = 0.35] (-1.32014, -0.20337) circle [radius=0.1];
            \draw [black, fill=yellow, opacity = 0.35] (-0.785649, -0.692477) circle [radius=0.1];
            
            \draw [black, fill=lightgray, opacity = 0.35] (0.392661, 1.67969) circle [radius=0.1];
            \draw [black, fill=lightgray, opacity = 0.35] (-1.19199, -0.0299144) circle [radius=0.1];
            \draw [black, fill=lightgray, opacity = 0.35] (1.22834, -2.20258) circle [radius=0.1];
            \draw [black, fill=lightgray, opacity = 0.35] (0.632006, -1.00243) circle [radius=0.1];
            \draw [black, fill=lightgray, opacity = 0.35] (-2.41178, 0.655762) circle [radius=0.1];
            \draw [black, fill=lightgray, opacity = 0.35] (-1.05262, 2.70988) circle [radius=0.1];
            \draw [black, fill=lightgray, opacity = 0.35] (1.70791, 0.938488) circle [radius=0.1];
            \draw [black, fill=lightgray, opacity = 0.35] (-2.55408, 2.58293) circle [radius=0.1];
            \draw [black, fill=lightgray, opacity = 0.35] (-0.196709, -0.415952) circle [radius=0.1];
            \draw [black, fill=lightgray, opacity = 0.35] (-1.26783, -0.546373) circle [radius=0.1];

            \draw [black, dashed, thick] (-0.425,0.885) -- (0.625, 0.775) -- (1.025,-0.825) -- (-1.075,-1.125) -- (-0.425,0.885);
            \draw [black, ultra thick, fill=orange] (-0.7,1.1) rectangle (-0.25,0.65);
            \draw [black, ultra thick, fill=lightgray] (0.4,1) rectangle (0.85,0.55);
            \draw [black, ultra thick, fill = yellow] (0.8, -0.6) rectangle (1.25,-1.05);
            \draw [black, ultra thick, fill=cyan] (-1.3,-0.9) rectangle (-0.85, -1.35);
            \node [black] at (-0.85,1.35) {$\mathbf{u}(1)$};
            \node [black] at (1.3,0.775) {$\mathbf{u}(2)$};
            \node [black] at (1.7,-0.775) {$\mathbf{u}(3)$};
            \node [black] at (-1.55,-1.6) {$\mathbf{u}(4)$};
        \end{tikzpicture}
        \caption{The elastic contribution to the configuration energy is small, because the sum $\ip{\mathbf{u}(1), \mathbf{u}(2)} + \ip{\mathbf{u}(2), \mathbf{u}(3)} + \ip{\mathbf{u}(3), \mathbf{u}(4)} + \ip{\mathbf{u}(4), \mathbf{u}(1)}$ of inner products between points whose indices are nearest neighbors in the underlying lattice is small.}
    \end{subfigure}
\caption{Informal schematic of one low-energy elastic manifold configuration when $d = 1$, $L = 4$, and $N = 2$. The four manifold points $\mathbf{u}(1)$, $\mathbf{u}(2)$, $\mathbf{u}(3)$, and $\mathbf{u}(4)$ are indicated by squares. In the top four subfigures, which indicate the contributions to the total energy made by each manifold point on its own, each manifold point sees its own (independent) Gaussian environment and tries to avoid points of high energy cost (represented by circles of the same color) while staying close to the origin. In the bottom subfigure, these environments are overlaid, showing that the points have achieved their separate goals while also keeping their lattice-neighbor inner products small. The inner products $\ip{\mathbf{u}(1),\mathbf{u}(3)}$ and $\ip{\mathbf{u}(2),\mathbf{u}(4)}$ do not contribute, because $\{1,3\}$ and $\{2,4\}$ are not lattice neighbors. Perhaps this configuration is a local minimum, meaning the energy $\mc{H}[\mathbf{u}]$ increases if we slightly perturb any of the images $\mathbf{u}(i)$. We are trying to count such minima (and total critical points) in the $N \to +\infty$ limit, when these four points are immersed not in the plane but in a high-dimensional space. This figure is inspired by \cite[Figure 2]{Gia2009}.}
\label{fig:elasticmanifold}
\end{figure}


\subsubsection*{History.}\
Hamiltonians of this flavor have been used to model a wide variety of problems featuring surfaces with self-interactions in disordered media. For example, when $d = 1$, the model is a polymer, related to the KPZ universality class; when $N = d+1$, the model is an interface, such as that between regions of opposite magnetization in a ferromagnet. We direct readers to \cite{Gia2009} and \cite{GiaLeD1997} for a review of disordered elastic media in general and to \cite{FyoLeD2020A} for a review of this specific Hamiltonian, which we summarize briefly here.

Two phenomena are of primary interest: the \emph{depinning threshold $f_c$} and the \emph{wandering (or roughness) exponent $\zeta$}. The former refers to the manifold's nonlinear response to an applied force $f$, a consequence of the impurities in the potential $V$: at zero temperature, it moves from its preferred position only if the force is above the depinning threshold $f > f_c = f_c(L,d,t_0,N)$, whereas if $f \leq f_c$ it does not move at all and is said to be \emph{pinned}. (Depinning is typically discussed in the massless limit $\mu_0 \downarrow 0$, but restricting the manifold points to lie in a finite box. At positive temperature, the manifold can move when $f < f_c$, but the movement is typically slow and is called \emph{creep}; the movement above $f_c$ is faster.) Depinning is related to complexity: Adding a force changes the Hamiltonian, and the landscape is supposed to simplify as $f$ increases; then $f_c$ can be defined as the smallest $f$ for which the resulting (quenched) complexity vanishes. We do not study this connection further, but refer readers to a discussion in \cite{FyoLeD2020}.

The wandering exponent $\zeta$, which depends on $d$ and $N$, is defined by
\[
	\E[(\mathbf{u}_0(x) - \mathbf{u}_0(y))^2] \sim \|x-y\|^{2\zeta}
\]
where $\mathbf{u}_0$ is the ground state. It is generally believed that $\zeta = 0$, i.e. that the manifold is flat, for $d \geq 4$. Larkin proposed a simplification of the Hamiltonian \eqref{eqn:fld_hamiltonian}, replacing the terms $V_N(\mathbf{u}(x),x)$ with their linearizations $V_N(0,x) + \partial_y V_N(0,x)|_{y = 0} \mathbf{u}(x)$. This so-called \emph{Larkin model} is solvable and gives $\zeta = \left(\frac{4-d}{2}\right)_+$; note also that the Larkin model is quadratic in $\mathbf{u}$, hence only has one local minimum, i.e., is necessarily zero-complexity. Physicists believe that the Larkin model is a good approximation for the elastic manifold when $L$ is below the \emph{Larkin length} $L_c$, with $L_c \sim (B''(0))^{-1/(4-d)}$ for weak disorder. Above the Larkin length the approximation is supposed to break down, and describing the physics of the elastic manifold (in particular finding $\zeta$) is more challenging. This regime inspired early technical developments of Fisher in functional renormalization group methods \cite{Fis1986} and of M{\'e}zard and Parisi in the replica method \cite{MezPar1991}; the latter paper suggested that the system exhibits zero-temperature replica symmetry breaking for small $\mu_0$ in the $N \to +\infty$ limit. (This is the same limit we will consider, although of course one is ultimately interested in finite-$N$ results.) Increasing the ``mass'' $\mu_0$ has the effect of simplifying the landscape, and for $\mu_0$ larger than a \emph{Larkin mass} $\mu_c$ (related to the Larkin length $L_c$), the system is believed to be replica symmetric. In fact the Larkin mass is central to our results; we are making rigorous a result of Fyodorov and Le Doussal suggesting that, for all other parameters fixed, $\mu_c$ is precisely the boundary between zero complexity (for $\mu_0 \geq 2\sqrt{B''(0)}\mu_c$) and positive complexity (for $\mu_0 \leq 2\sqrt{B''(0)}\mu_c$). The same $\mu_c$ serves as the boundary both for total critical points and for local minima.

There are some previous complexity results for special cases. When $d = 0$, the system is interpreted by convention as being a single point, i.e., it reduces to the Hamiltonian \eqref{eqn:fyodorov2004model}. Fyodorov computed the complexity of \eqref{eqn:fyodorov2004model} and found a continuous phase transition in $\mu$: For $\mu \geq \mu_c$, the annealed complexity (of the total number of critical points) is zero and the landscape is ``simple,'' but for $\mu < \mu_c$ the annealed complexity is positive and the landscape is ``complex'' or ``glassy'' \cite{Fyo2004}. Later, Fyodorov and Williams showed that this phase transition matches that of replica-symmetry/replica-symmetry-breaking at zero temperature \cite{FyoWil2007}, interpreting replica-symmetry-breaking as ``a replica-symmetric computation of the free energy becomes unstable in the zero-temperature limit.'' For more discussion of the $d = 0$ case, see Section \ref{subsec:softspins} below. When $d = 1$, the model is an elastic line, with complexity studied in the case of $N = 1$ and $L \to +\infty$ in \cite{FyoLeDRosTex2018}.


\subsubsection*{Results.}\
Let $\mc{N}_{\text{tot}}$ be the random number of stationary points of the Hamiltonian, i.e., of functions $\mathbf{u} : \Omega \to \R^N$ such that $\partial_{\mathbf{u}_i(x)} \mc{H}[\mathbf{u}] = 0$ for every $x \in \Omega$ and every $i = 1, \ldots, N$. Let $\mc{N}_{\text{st}}$ be the number of local minima.

\begin{defn}
For any $\mu_0, t_0, b > 0$, define
\begin{align}
\label{eqn:FLD_onedimensional}
\begin{split}
	&\Sigma(\mu_0, t_0, b) = \Sigma(\mu_0, t_0, b, L, d) \\
	&= - \frac{1}{L^d} \log(\det(\mu_0 \Id_{L^d \times L^d} - t_0 \Delta)) + \sup_{u \in \R} \left\{ \int_\R \log\abs{\lambda - u} (\rho_{\text{sc},b} \boxplus \hat{\mu}_{-t_0\Delta + \mu_0\Id})(\lambda) \diff \lambda - \frac{u^2}{2b} \right\}, \\
	&\Sigma_{\textup{st}}(\mu_0, t_0, b)  = \Sigma_{\textup{st}}(\mu_0, t_0, b, L, d) \\
	&= - \frac{1}{L^d} \log(\det(\mu_0 \Id_{L^d \times L^d} - t_0 \Delta)) + \sup_{u \leq \mathtt{l}(\rho_{\text{sc},b} \boxplus \hat{\mu}_{-t_0\Delta + \mu_0\Id})}  \left\{ \int_\R \log\abs{\lambda - u} (\rho_{\text{sc},b} \boxplus \hat{\mu}_{-t_0\Delta + \mu_0\Id})(\lambda) \diff \lambda - \frac{u^2}{2b} \right\}.
\end{split}
\end{align}
\end{defn}

\begin{thm}
\label{thm:variationalFLDcomplexity}
We have 
\begin{align}
\label{eqn:variationalFLDcomplexity}
\begin{split}
	\lim_{N \to \infty} \frac{1}{NL^d} \log\E[\mc{N}_{\textup{tot}}] &= \Sigma(\mu_0,t_0,4B''(0)), \\
	\lim_{N \to \infty} \frac{1}{NL^d} \log\E[\mc{N}_{\textup{st}}] &= \Sigma_{\textup{st}}(\mu_0,t_0,4B''(0)).
\end{split}
\end{align}
\end{thm}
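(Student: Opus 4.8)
The plan is to apply the Kac-Rice machinery described in the Metatheorem to the Hamiltonian \eqref{eqn:fld_hamiltonian}, which lives on the finite-dimensional manifold $(\R^N)^\Omega \cong \R^{NL^d}$. First I would write down the Kac-Rice formula \eqref{eqn:kacrice} for $\E[\mc{N}_{\textup{tot}}]$ (and its index-restricted analogue for $\E[\mc{N}_{\textup{st}}]$). The gradient $\nabla \mc{H}[\mathbf{u}]$ is a Gaussian vector in $\R^{NL^d}$ whose covariance splits along the $\Omega$-index because the fields $V_N(\cdot,x)$ are independent across $x$; since the field is isotropic in each $\R^N$ factor, the conditional law of the Hessian $\nabla^2\mc{H}[\mathbf{u}]$ given $\nabla\mc{H}[\mathbf{u}]=0$ should be \emph{independent of the basepoint} $\mathbf{u}$ (this is the crucial structural feature flagged in the introduction). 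A direct computation of the Gaussian covariance structure of $V_N$, its gradient and its Hessian --- using the four derivatives of $B$ at $0$ and the Schoenberg representation \eqref{eqn:schoenberg} --- should show that, after conditioning, the Hessian has the form of a block matrix: an $\Omega$-block structure carrying the deterministic operator $\mu_0\Id - t_0\Delta$ tensored with $\Id_N$, plus an independent GOE-type part in each $N$-block with variance governed by $B''(0)$, plus a small-rank random shift induced by the $m=1$ extra Gaussian variable coming from the $\|\mathbf u(x)\|^2$-type correlations (the value of the field versus its curvature). This reduces the problem to an integral of the form \eqref{eqn:overallproblem} over $u \in \R$.

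Next I would identify the limiting spectral measure of the conditioned Hessian. Because the deterministic part is $(\mu_0\Id - t_0\Delta) \otimes \Id_N$ and the random part is, in each $N$-block, a GOE of variance $\sim B''(0)/N$, with the appropriate scaling the Hessian's empirical measure should converge to the free convolution $\rho_{\text{sc},b} \boxplus \hat\mu_{-t_0\Delta + \mu_0\Id}$ with $b = 4B''(0)$ (the factor $4$ coming from the normalization of the GOE semicircle, where the edge is $2\sqrt{b}$ and the variance is $b$; one should check the precise constant against the covariance of the second derivatives of $B$). Here the measure $\hat\mu_{-t_0\Delta+\mu_0\Id}$ is a fixed (finitely supported, $N$-independent) measure since $L,d$ are fixed, so this free convolution is a genuine limiting object. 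The extra scalar variable $u$ enters as a shift $\lambda \mapsto \lambda - u$ against a Gaussian weight $e^{-NL^d u^2/(2b)}$, which after Laplace asymptotics produces the $\sup_{u}$ term with penalty $u^2/(2b)$, and the deterministic Gaussian-density prefactor $\phi_\sigma(0)$ in Kac-Rice produces exactly the $-\frac{1}{L^d}\log\det(\mu_0\Id - t_0\Delta)$ term (this is the "simpler non-variational term" of the Metatheorem, computed from the covariance of $\nabla\mc H$, which is $(\mu_0\Id-t_0\Delta)\otimes\Id_N$ up to the $B'(0)$-contribution). For local minima, the indicator $\mathbf 1_{\nabla^2\mc H \geq 0}$ restricts, at exponential scale via \eqref{eqn:overviewminima}, to those $u$ for which the shifted measure $\rho_{\text{sc},b}\boxplus\hat\mu_{-t_0\Delta+\mu_0\Id}$ shifted by $-u$ is supported in $[0,\infty)$, i.e. $u \leq \mathtt{l}(\rho_{\text{sc},b}\boxplus\hat\mu_{-t_0\Delta+\mu_0\Id})$, giving the restricted supremum in $\Sigma_{\textup{st}}$. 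One must also check there are no outlier eigenvalues (so the large-deviation cost of the positivity event is purely the bulk/edge cost already captured), which follows from the stability theory of the MDE referenced in Section \ref{sec:stability}.

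The main obstacle, I expect, is \emph{justifying the Laplace asymptotics rigorously}: the inner determinant expectation $\E[\abs{\det H_N(u)}]$ equals $\exp(NL^d \int \log|\lambda-u|\,\mu_N(u,d\lambda) + o(NL^d))$ only after one (i) proves the single-matrix determinant asymptotic \eqref{eqn:informal_detcon} for this non-GOE ensemble --- invoking the companion paper \cite{BenBouMcK2021I}, but one must verify its hypotheses apply to the specific block/variance-profile structure here --- and (ii) controls the $u$-integral uniformly, including the non-compact tails $|u|\to\infty$ (where the Gaussian weight must be shown to beat the $\log|\lambda-u| \sim \log|u|$ growth of the determinant) and the regularity of $u \mapsto \mu_N(u)$ and $u \mapsto \mu_\infty(u)$ needed to swap $\lim_N$ and $\sup_u$. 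The tail control and the uniform-in-$u$ convergence $\mu_N(u)\to\mu_\infty(u)$ are exactly the MDE-stability inputs promised in Section \ref{sec:stability}; assembling them into a clean two-sided bound $\frac{1}{NL^d}\log(\text{integral}) \to \Sigma$ is the technical heart. A secondary subtlety is verifying, via the explicit covariance computation, that the conditioned Hessian really does decouple into (deterministic lattice part) $+$ (independent GOE per block) $+$ (rank-one shift) with no residual basepoint dependence and with the claimed constant $b = 4B''(0)$; getting this constant and the exact form of the rank-one piece right is essential for matching the Fyodorov--Le Doussal formulas.
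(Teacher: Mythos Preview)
Your proposal has a genuine structural gap: you claim the auxiliary integration is over $u\in\R$ (i.e.\ $m=1$), but this is incorrect. Since the fields $V_N(\cdot,x)$ are \emph{independent across lattice sites} $x\in\Omega$, the conditioned Hessian decomposes as
\[
\nabla^2\mc{H}[\mathbf{u}] \;\stackrel{(d)}{=}\; \bigl(-t_0\Delta+\mu_0\Id\bigr)\otimes\Id_N \;+\; \sum_{i=1}^{L^d} E_{ii}\otimes X_i \;+\; \sum_{i=1}^{L^d} \xi_i\, E_{ii}\otimes\Id_N,
\]
where the $X_i$ are independent scaled GOE and the $\xi_i$ are \emph{independent} scalar Gaussians, one per lattice site. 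Thus $m=L^d$, not $m=1$: the Kac-Rice integral is over $u=(u_1,\ldots,u_{L^d})\in\R^{L^d}$, and the deterministic shift is $\diag(u)\otimes\Id_N$, not a scalar multiple of the identity. The limiting measure $\mu_\infty(u)$ for generic $u\in\R^{L^d}$ is \emph{not} a free convolution with semicircle; it is the MDE measure associated with the equation \eqref{eqn:kroneckermde_diagonal}, which only reduces to $\rho_{\text{sc},J^2}\boxplus\hat\mu_{-t_0\Delta+\mu_0\Id}$ when all coordinates of $u$ are equal.

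The paper therefore first establishes an $L^d$-dimensional variational formula (Theorem \ref{thm:FLDcomplexity}), and then carries out a nontrivial dimension reduction that your proposal entirely omits: one proves that the functional $u\mapsto\int\log|\lambda|\,\mu_\infty(u,\diff\lambda)-\frac{\|u\|^2}{2J^2L^d}$ is \emph{concave} on $\R^{L^d}$ (Proposition \ref{prop:FLDconcavity}, via a delicate computation of its Hessian using the differentiation identity $\partial_{u_k}(-Lm)=\partial_z m_k$ for the MDE solution and a positivity property of the stability operator, Lemma \ref{lem:erdoskruger}), and then combines this concavity with the invariance of the problem under lattice translations to conclude that the supremum is attained on the diagonal $\{(u,\ldots,u):u\in\R\}$ (Proposition \ref{prop:FLDrestricttodiagonal}). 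Only after this reduction does the free-convolution identification and the one-dimensional formula $\Sigma(\mu_0,t_0,4B''(0))$ emerge. This concavity-plus-symmetry step is the essential missing idea in your outline.
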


\begin{defn}
For any $t_0, b > 0$, let the \emph{Larkin mass} $\mu_c = \mu_c(t_0, b, L, d)$ be the unique positive solution to
\begin{equation}
\label{eqn:larkin}
	\int_\R \frac{\hat{\mu}_{-t_0\Delta}(\diff \lambda)}{(\mu_c+\lambda)^2} = \frac{1}{b}.
\end{equation}
It will also be useful to define, for any $\mu_0, t_0 > 0$, the critical noise parameter
\[
	b_c = b_c(\mu_0, t_0, L, d) = \left( \int_\R \frac{\hat{\mu}_{-t_0\Delta}(\diff \lambda)}{(\mu_0 + \lambda)^2} \right)^{-1}.
\]
\end{defn}

For $\mu_0 < \mu_c(t_0, b, L, d)$, we write $c = c(\mu_0, t_0, b, L, d)$ for the unique positive value satisfying
\[
	\int_\R \frac{\hat{\mu}_{-t_0\Delta}(\diff \lambda)}{(\mu_0 + \lambda)^2 + b^2c} = \frac{1}{b} 
\]
and use this to define
\[
	v = v(\mu_0, t_0, b, L, d) = -b \int_\R \frac{\mu_0 + \lambda}{(\mu_0 + \lambda)^2 + b^2c} \, \hat{\mu}_{-t_0\Delta}(\diff \lambda).
\]
Finally, we need the positive numbers
\[
	c_{\textup{tot}}(\mu_0, t_0, L, d) = \frac{\left( \int_\R \frac{\hat{\mu}_{-t_0\Delta}(\diff \lambda)}{(\mu_0 + \lambda)^2} \right)^4}{4\left( \int_\R \frac{\hat{\mu}_{-t_0\Delta}(\diff \lambda)}{(\mu_0 + \lambda)^4} \right)}, \qquad c_{\textup{min}}(\mu_0, t_0, L, d) = \frac{\left( \int_\R \frac{\hat{\mu}_{-t_0\Delta}(\diff \lambda)}{(\mu_0 + \lambda)^2} \right)^6}{24 \left( \int_\R \frac{\hat{\mu}_{-t_0\Delta}(\diff \lambda)}{(\mu_0 + \lambda)^3}\right)^2}.
\]

\begin{thm}
\label{thm:betterFLDcomplexity}
For each $t_0$ and $B''(0)$, the Larkin mass $\mu_c$ separates the phases of positive and zero complexity, both for total critical points (whose complexity exhibits quadratic near-critical behavior) and for local minima (whose complexity exhibits cubic near-critical behavior).

More precisely, the complexity functions satisfy the following, where $b=4B''(0)$:
\begin{enumerate}[label=(\roman*)]
\item if $\mu_0 \geq \mu_c(t_0, b, L, d)$, then $\Sigma(\mu_0, t_0, b) = \Sigma_{\textup{st}}(\mu_0, t_0, b) = 0$;
\item if $\mu_0 < \mu_c(t_0, b, L, d)$, then $\Sigma(\mu_0, t_0, b) > \Sigma_{\textup{st}}(\mu_0, t_0, b) > 0$, and these are given by
\begin{align*}
	\Sigma(\mu_0, t_0, b) &= -\frac{1}{L^d} \log(\det(\mu_0 \Id - t_0\Delta)) + \int_\R \log\abs{\lambda - v} (\rho_{\text{sc}, b} \boxplus \hat{\mu}_{-t_0 \Delta+\mu_0\Id })(\lambda)(\diff \lambda) - \frac{v^2}{2b}, \\
	\Sigma_{\textup{st}}(\mu_0, t_0, b) &= -\frac{1}{L^d} \log(\det(\mu_0 \Id - t_0\Delta)) + \int_\R \log\abs{\lambda - \ell} (\rho_{\text{sc}, b} \boxplus \hat{\mu}_{-t_0 \Delta+\mu_0\Id })(\lambda)(\diff \lambda) - \frac{\ell^2}{2b}
\end{align*}
where $\ell = \mathtt{l}(\rho_{\text{sc}, b} \boxplus \hat{\mu}_{-t_0 \Delta + \mu_0 \Id})$ and $v$ is as above; and
\item for fixed $\mu_0$ and $t_0$, and supercritical $b$, we have 
\begin{align*}
	\Sigma(\mu_0, t_0, b) = c_{\text{tot}}(\mu_0, t_0, L, d) \cdot (b - b_c)^2 + O((b-b_c)^3), \\ 
	\Sigma_{\textup{st}}(\mu_0, t_0, b) = c_{\textup{min}}(\mu_0, t_0, L, d) \cdot (b - b_c)^3 + O((b-b_c)^4).
\end{align*}
\end{enumerate}
\end{thm}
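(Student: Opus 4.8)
The plan is to analyze the one-dimensional variational problem defining $\Sigma$ and $\Sigma_{\textup{st}}$ in \eqref{eqn:FLD_onedimensional} directly, using the explicit structure of the function
\[
	\Phi(u) = \int_\R \log\abs{\lambda - u} (\rho_{\text{sc},b} \boxplus \hat{\mu}_{-t_0\Delta+\mu_0\Id})(\diff\lambda) - \frac{u^2}{2b}.
\]
First I would record the key fact that $G(u) := \int \log\abs{\lambda - u}\, \sigma(\diff\lambda)$, for $\sigma = \rho_{\text{sc},b}\boxplus\hat\mu_{-t_0\Delta+\mu_0\Id}$, is smooth and strictly concave on the interval $u \le \mathtt{l}(\sigma)$ to the left of the support, since its second derivative there is $-\int (\lambda-u)^{-2}\sigma(\diff\lambda) < 0$, while $u^2/(2b)$ is convex; hence $\Phi$ is strictly concave on $(-\infty,\mathtt{l}(\sigma)]$, and the constrained problem defining $\Sigma_{\textup{st}}$ has a unique maximizer. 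The first-order condition is $H_\sigma(u) := \int (\lambda-u)^{-1}\sigma(\diff\lambda) = u/b$, where $H_\sigma$ is (minus) the Stieltjes-type transform. The crucial computational input is the subordination relation for free convolution with a semicircle: if $\sigma = \rho_{\text{sc},b}\boxplus\hat\mu_{-t_0\Delta+\mu_0\Id}$, then its Stieltjes transform $m_\sigma$ satisfies $m_\sigma(z) = m_{\hat\mu_{-t_0\Delta+\mu_0\Id}}(z + b\, m_\sigma(z))$. Evaluating this just below the left edge $\mathtt{l}(\sigma)$ (where $m_\sigma$ is real) lets me translate the condition "$u = \mathtt{l}(\sigma)$ is the unconstrained maximizer" into an equation purely in terms of $\hat\mu_{-t_0\Delta}$ and $\mu_0$, which I expect to reduce exactly to $b = b_c$, i.e. $\int \hat\mu_{-t_0\Delta}(\diff\lambda)/(\mu_0+\lambda)^2 = 1/b$; equivalently, via the definition of $\mu_c$, to $\mu_0 = \mu_c$. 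This identifies $\mu_c$ (resp. $b_c$) as the transition.

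For part (i), $\mu_0 \ge \mu_c$: I would show the unconstrained maximizer of $\Phi$ over all of $\R$ is exactly $u = 0$, at which $\Phi(0) = \int \log|\lambda|\,\sigma(\diff\lambda)$, and that this cancels the determinant term $-\frac{1}{L^d}\log\det(\mu_0\Id - t_0\Delta)$. The cancellation is the identity $\int\log|\lambda|\,\sigma(\diff\lambda) = \frac{1}{L^d}\log\det(\mu_0\Id - t_0\Delta)$, which I'd prove by noting that adding $\rho_{\text{sc},b}$ via free convolution does not change $\int\log|\lambda|\,(\cdot)$ when the base measure is supported on $(0,\infty)$ at distance controlled so the semicircle perturbation keeps the support positive — precisely the regime $b \le b_c$ — because the logarithmic potential at $0$ is harmonic off the support and the relevant subordination evaluated at $z=0$ gives $\int\log|\lambda|\,\sigma(\diff\lambda) = \int\log|\lambda|\,\hat\mu_{-t_0\Delta+\mu_0\Id}(\diff\lambda)$. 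Checking $u=0$ is the unconstrained max reduces to $H_\sigma(0) = 0$, true by symmetry considerations / the subordination at $z\to 0$, combined with concavity of $\Phi$ near $0$; and since $0 \le \mathtt{l}(\sigma)$ in this regime, the constrained and unconstrained problems agree, giving $\Sigma = \Sigma_{\textup{st}} = 0$.

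For part (ii), $\mu_0 < \mu_c$: now $0 > \mathtt{l}(\sigma)$ is no longer admissible — I must check the unconstrained maximizer $v$ lies strictly inside the "bad" region $v > \mathtt{l}(\sigma)$ — wait, rather $v < 0$ and the first-order equation $H_\sigma(v) = v/b$ has its solution; plugging the subordination identity into $H_\sigma(v) = v/b$ produces exactly the defining equations for $c$ and $v$ stated before the theorem (the auxiliary parameter $c$ encodes $b^2 c = $ (imaginary-part)$^2$ of the subordination point, which becomes complex once we cross into the bulk). So $\Sigma$ is $\Phi(v)$ plus the determinant term, matching the displayed formula; and $\Sigma_{\textup{st}}$ is $\Phi$ evaluated at the constrained optimum, which by concavity is the boundary point $\ell = \mathtt{l}(\sigma)$ since the unconstrained optimum $v$ has moved outside the feasible set. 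Strict positivity $\Sigma > \Sigma_{\textup{st}} > 0$ follows from strict concavity (so $\Phi(v) > \Phi(\ell)$) plus the fact that $\Phi(\ell) > \Phi(0) = $ (value giving zero complexity) once $\mathtt{l}(\sigma) < 0$ — this last inequality uses monotonicity of $\Phi$ on $(-\infty, v)$. Finally, part (iii): I would Taylor-expand everything as $b \downarrow b_c$ (equivalently as $\mu_0\uparrow\mu_c$). Write $\epsilon = b - b_c$. From the defining equation for $c$, implicit differentiation gives $c \sim$ const$\cdot\epsilon$ to leading order with the constant expressed through $\int\hat\mu_{-t_0\Delta}(\diff\lambda)/(\mu_0+\lambda)^3$; then $v \sim$ const$\cdot\sqrt{c} \sim$ const$\cdot\sqrt\epsilon$; and substituting into $\Phi(v) - \Phi(0)$, the leading term is quadratic in $v$, hence $\propto c \propto \epsilon^2$, giving the $c_{\text{tot}}(b-b_c)^2$ law after bookkeeping the constants — where the appearance of $\int\hat\mu_{-t_0\Delta}(\diff\lambda)/(\mu_0+\lambda)^4$ in $c_{\text{tot}}$ comes from the next term in the expansion of the defining equation for $c$. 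For minima one instead expands $\Phi(\ell) - \Phi(0)$ where $\ell = \mathtt{l}(\sigma)$; here the constraint is active, and the edge $\ell$ behaves like $\sqrt\epsilon$ but the value $\Phi$ at a boundary optimum of a function whose derivative there is itself $O(\sqrt\epsilon)$ produces an extra half-power, yielding a cubic $\epsilon^3$ law with constant $c_{\text{min}}$ built from $\int\hat\mu_{-t_0\Delta}(\diff\lambda)/(\mu_0+\lambda)^3$.

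The main obstacle I anticipate is controlling the edge $\mathtt{l}(\rho_{\text{sc},b}\boxplus\hat\mu_{-t_0\Delta+\mu_0\Id})$ and the behavior of $m_\sigma$, $\Phi$ near it with enough precision for the near-critical expansions — in particular, justifying that the free convolution with the semicircle produces a genuine square-root edge (no atom, no harder vanishing), so that the $\sqrt\epsilon$ scalings above are correct. This is precisely the free-probability statement flagged in the introduction (the appendix result that $\mu\boxplus\rho_{\text{sc},t}$ decays at least like a square root at its edges); I would invoke it to rule out degenerate edge behavior, and combine it with the explicit subordination equation to get the sharp constants. A secondary technical point is verifying, in part (ii), that the unconstrained maximizer $v$ really is the point described by the auxiliary equations and that it is unique — this needs concavity of $u\mapsto G(u)$ on the whole line (which holds since $G'' = -\int(\lambda-u)^{-2}\sigma(\diff\lambda) < 0$ wherever defined, and $G$ is $C^1$ across the support because $\sigma$ has a bounded density by the square-root-edge input), so that $\Phi$ is globally strictly concave and the first-order condition has a unique root.
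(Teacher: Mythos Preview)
Your proposal contains a genuine error in part (i) that propagates throughout. You claim the unconstrained maximizer of $\Phi$ is at $u=0$ and that $\int\log\abs{\lambda}\,\sigma(\diff\lambda) = \int\log\abs{\lambda}\,\hat\mu_{-t_0\Delta+\mu_0\Id}(\diff\lambda)$. Both are false. In the subcritical regime $\sigma$ is supported in $(0,\infty)$, so $\Phi'(0) = -m_\sigma(0) = -\int\lambda^{-1}\sigma(\diff\lambda) < 0$; there is no symmetry, and the subordination relation at $z=0$ does not force $m_\sigma(0)=0$ either. The true maximizer is $u^\ast = -b\int\lambda^{-1}\hat\mu_{-t_0\Delta+\mu_0\Id}(\diff\lambda) < 0$ (this is the paper's $u_t$, and you can recover it from subordination since $u^\ast + b\,m_\sigma(u^\ast)=0$). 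A one-line check with $\mu_D=\delta_1$ shows free convolution with semicircle strictly decreases $\int\log\abs{\lambda}\,(\cdot)$ whenever the support stays positive, so your cancellation identity cannot hold. The correct identity $\Phi(u^\ast)=\int\log\lambda\,\hat\mu_{-t_0\Delta+\mu_0\Id}(\diff\lambda)$ carries the nonzero term $(u^\ast)^2/(2b)$, and the paper does not obtain it statically: it introduces the time variable $t$ (the semicircle variance), uses Burgers' equation $\partial_t m_t = m_t\partial_z m_t$ to show $m_t(u_t)\equiv m_0(0)$ and hence $\frac{\diff}{\diff t}F(u_t,t)=0$, whence $F(u_t,t)=F(0,0)=0$.

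Two further gaps. First, your concavity argument is incomplete: the formula $G'' = -\int(\lambda-u)^{-2}\sigma(\diff\lambda)$ diverges for $u$ inside $\supp(\sigma)$, which is exactly where the supercritical optimizer $v$ lives. The paper's Lemma \ref{lem:softspinsConcavity} handles the bulk by differentiating the Pastur relation to get $\partial_u\re m_t(u)\geq -1/t$, which is precisely the bound making $\Phi''\leq 0$; this is not the naive computation. Second, your near-critical scaling inherits the $u=0$ mistake: you write $v\sim\sqrt\epsilon$, but in fact $v\to\ell_{t_c}<0$ as $b\downarrow b_c$ and moves at order $\epsilon$ from there, so ``$\Phi(v)-\Phi(0)$ quadratic in $v$'' is the wrong expansion. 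The paper instead differentiates $F(v_t,t)$ and $F(\ell_t,t)$ in $t$: for the former one gets $\frac{\diff}{\diff t}F(v_t,t)=\tfrac12\im(m_t(v_t))^2$, which is $\sim\epsilon$ by the implicit equation for $c$; for the latter one uses the edge evolution $\partial_t\ell_t = -\re m_t(\ell_t)$ and the Guionnet--Ma\"ida subordination inequality (Lemma \ref{lem:GuionnetMaida}) to identify $t_c$ and to show the first two $t$-derivatives of $F(\ell_t,t)$ vanish there, the appendix square-root edge result being needed to differentiate through $\ell_t$.
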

For the proof of this theorem, we use determinant asymptotics from our companion paper \cite{BenBouMcK2021I} to give the complexity as a variational problem over $\R^{L^d}$. Using a remarkable MDE-induced convexity property, we reduce this to a variational problem over $\R$, namely \eqref{eqn:FLD_onedimensional}. We analyze this one-dimensional variational problem with a dynamic approach, varying $B''(0)$ for fixed $\mu_0$ and $t_0$.

We remark that Fyodorov and Le Doussal also exhibited a quadratic/cubic near-critical behavior for this model but in a different scaling, varying $\mu_0$ for fixed $B''(0)$ and $t_0$ \cite{FyoLeD2020}. 


\subsection{Soft spins in an anisotropic well.}\
\label{subsec:softspins}
We consider the random Hamiltonian $\mc{H}_N : \R^N \to \R$ given by
\[
    \mc{H}_N(x) = \frac{\ip{x, D_Nx}}{2} + V_N(x),
\]
where $D_N$ is a real symmetric matrix satisfying conditions below, and where $V_N$ is an isotropic centered Gaussian field with covariance
\[
    \E[V_N(x_1)V_N(x_2)] = NB\left( \frac{\|x_1 - x_2\|^2}{2N} \right)
\]
with $B : \R_+ \to \R_+$ a correlator function (meaning it has the representation \eqref{eqn:schoenberg}). As in Section \ref{subsec:elasticmanifold}, we assume that $B$ is four times differentiable at zero to ensure twice-differentiability of the field, and we assume 
\[
    0 < |B^{(i)}(0)| \qquad \text{for } i = 0, 1, 2,
\]
for nondegeneracy of the field and its first two derivatives.

We suppose that $(D_N)_{N=1}^\infty$ is a sequence of real symmetric matrices, $D_N \in \R^{N \times N}$, and that there exists some compactly supported measure $\mu_D$ such that, for some $\epsilon > 0$, we have
\begin{equation}
\label{eqn:speed_of_environment}
    d_{\textup{BL}}(\hat{\mu}_{D_N}, \mu_D) \leq N^{-\epsilon}
\end{equation}
and the eigenvalues are uniformly gapped away from zero and from infinity, in that
\[
	\epsilon \leq \inf_N \lambda_{\textup{min}}(D_N) \leq \sup_N \lambda_{\textup{max}}(D_N) \leq \frac{1}{\epsilon}.
\]

Although our results are for the $N \to +\infty$ limit, Figure \ref{fig:softspins} displays how changing $D_N$ can qualitatively change the count of critical points when $N = 2$.

\begin{figure}[h!]
\centering
    \begin{subfigure}{0.5\textwidth}
        \includegraphics[scale=0.22]{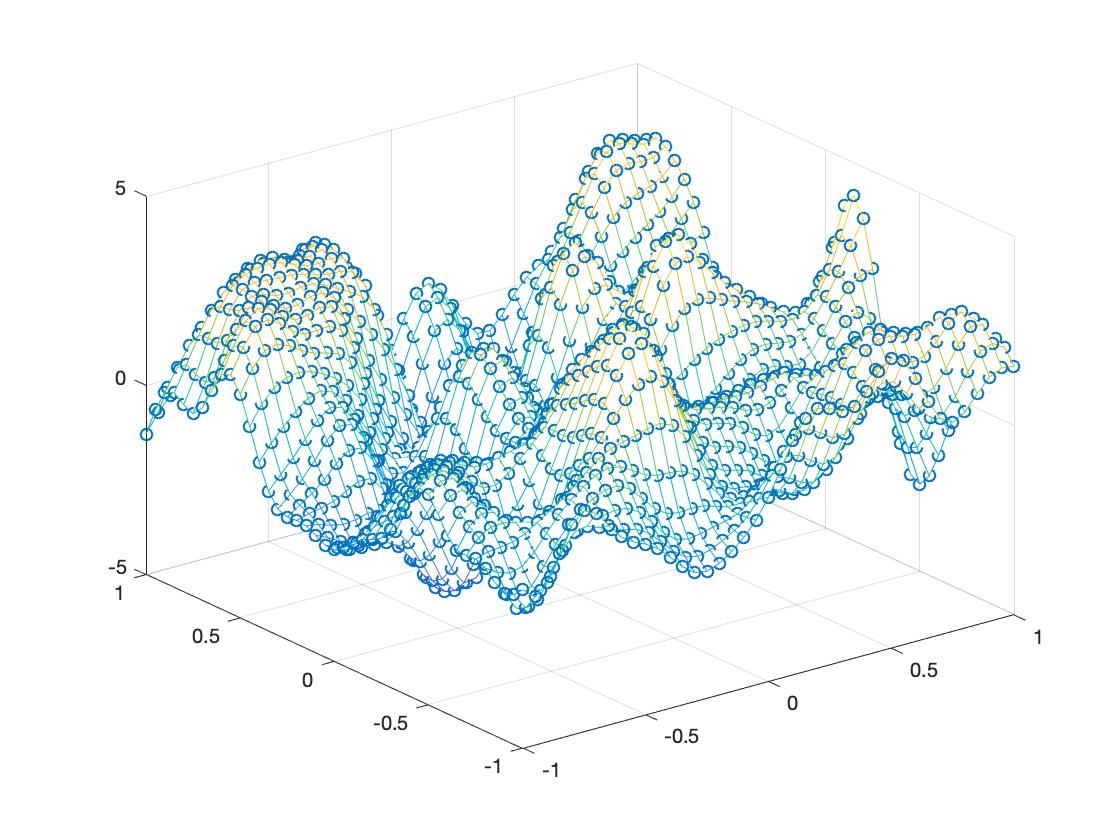}
        \caption{The landscape appears rugged when $D_2 = 0$.}
    \end{subfigure}%
    \begin{subfigure}{0.5\textwidth}
        \includegraphics[scale=0.22]{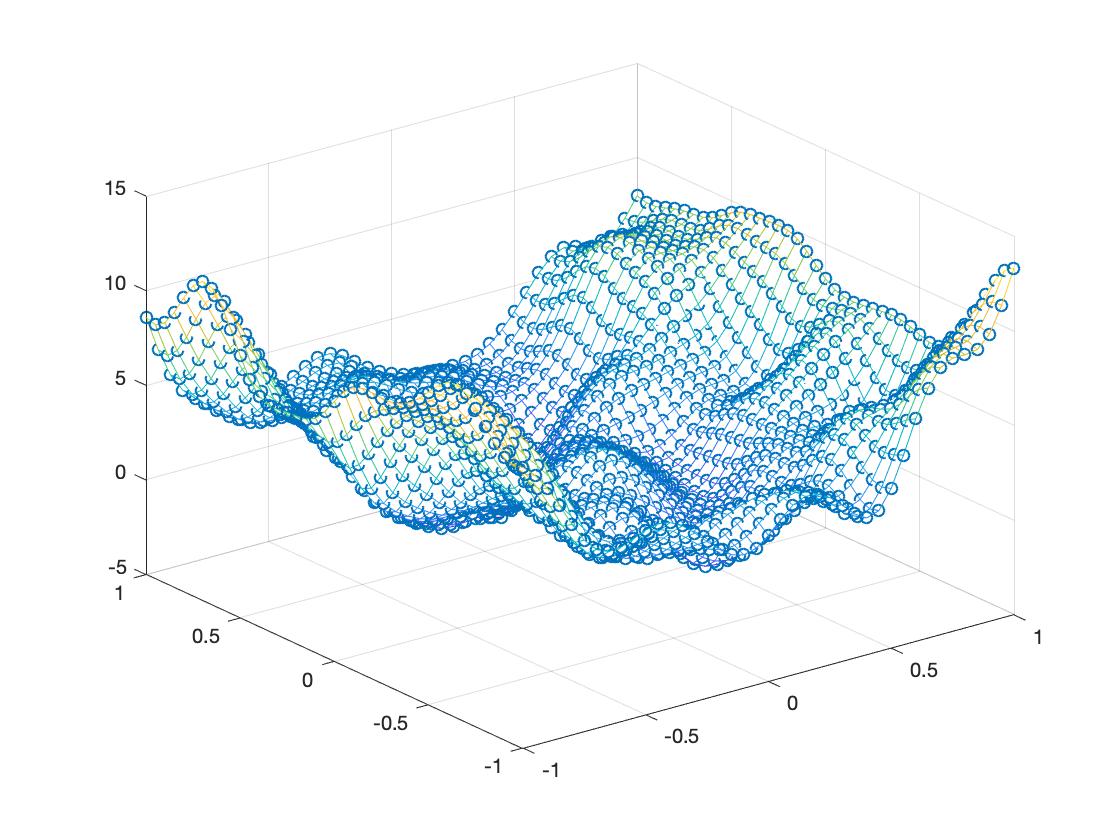}
        \caption{There are fewer critical points when $D_2 = 3 \cdot \left(\begin{smallmatrix} 6 & 0 \\ 0 & 1 \end{smallmatrix}\right)$.}
    \end{subfigure}%
    \\
    \begin{subfigure}{0.5\textwidth}
        \includegraphics[scale=0.22]{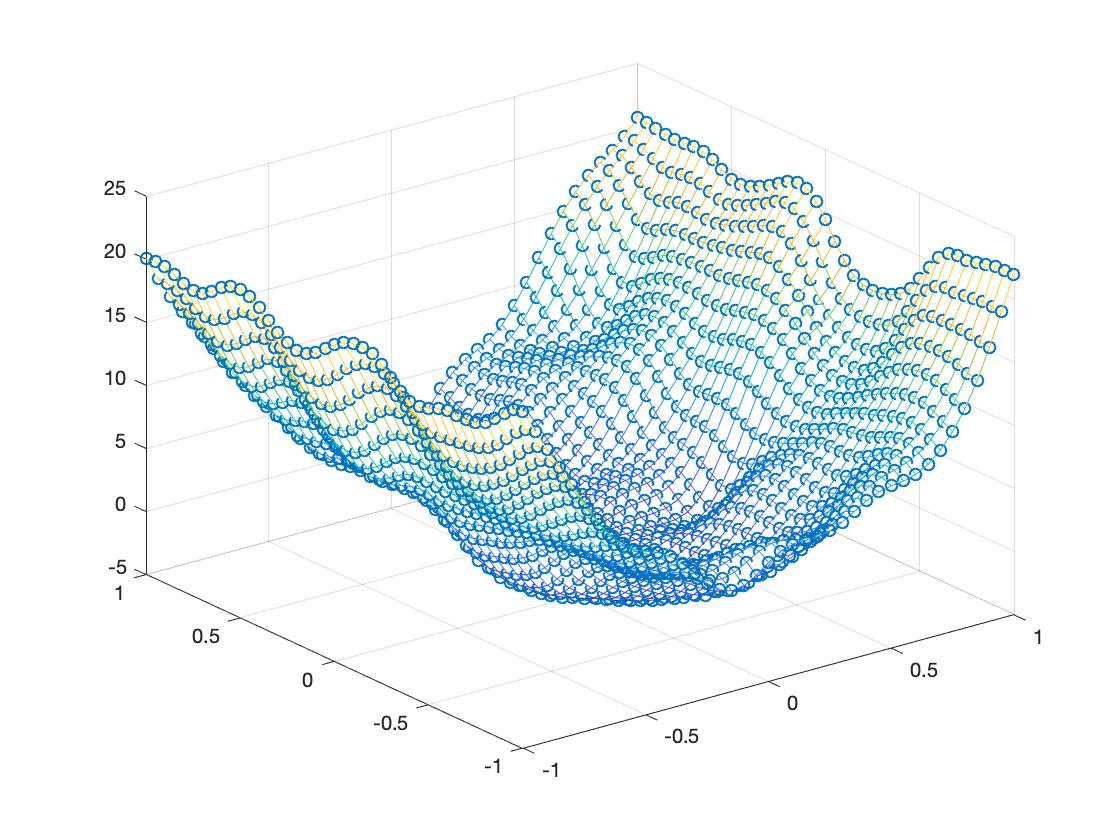}
        \caption{There are many fewer critical points when $D_2 = 6 \cdot \left(\begin{smallmatrix} 6 & 0 \\ 0 & 1 \end{smallmatrix}\right)$.}
    \end{subfigure}%
    \begin{subfigure}{0.5\textwidth}
        \includegraphics[scale=0.22]{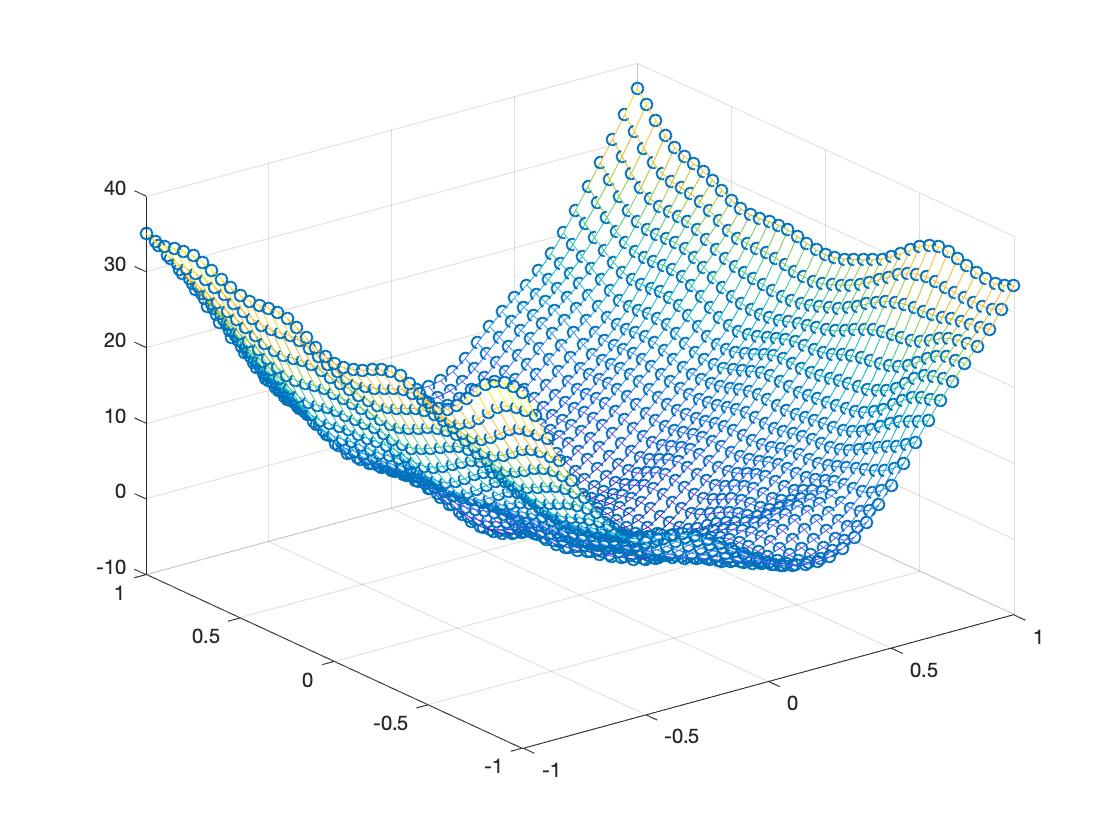}
        \caption{There are almost no critical points when $D_2 = 10 \cdot \left(\begin{smallmatrix} 6 & 0 \\ 0 & 1 \end{smallmatrix}\right)$.}
    \end{subfigure}%
\caption{Numerical (discretized) samples of $\mc{H}_2$ on $[-1,1]^2$ with the same (resampled) noise and four different choices of signal $D_N$. Precisely, these are scatterplots of $\mc{H}_2(x)$ values for $x$ on a $41 \times 41$ lattice, with an overlaid mesh fit, made with Matlab. Here $B(r) = \exp(-80r)$, meaning $\E[V_2(x)V_2(y)] = 2\exp(-20\|x-y\|^2)$.}
\label{fig:softspins}
\end{figure}


\subsubsection*{History.}\
Models of the form $V_N(x) + \frac{\mu}{2}\|x\|^2$ (recall \eqref{eqn:fyodorov2004model}), with various choices of randomness, have been considered in a wide variety of contexts. There are nice overviews of the literature in \cite{Fyo2004, FyoWil2007, AufZen2020}. In the early 1990s, the model was studied by M\'{e}zard-Parisi \cite{MezPar1992} and by Engel \cite{Eng1993} as a zero-dimensional case of the elastic manifold. The complexity was computed by Fyodorov \cite{Fyo2004} for total critical points and Fyodorov-Williams \cite{FyoWil2007} for minima, finding a phase transition between positive and zero complexity at an explicit $\mu_c$. Fyodorov and Nadal found that the complexity of minima for $\mu$ near $\mu_c$, scaled appropriately, tends to a limiting shape related to the Tracy-Widom distribution \cite{FyoNad2012}.  

There is also a long history of generalizing the model, as we do: Fyodorov and Williams actually studied the complexity after replacing the quadratic confinement $\frac{\mu}{2}\|x\|^2$ with a general radial confinement $NU(\frac{\|x\|^2}{2N})$ for some function $U : \R \to \R$ which is increasing and convex \cite{FyoWil2007}. In some sense our extension is orthogonal to theirs: they let the confinement be non-quadratic, whereas we let it be non-radial. As another generalization, if $V_N(x)$ is not isotropic but merely has isotropic \emph{increments} (meaning $\E[(V_N(x)-V_N(y))^2]$ depends only $\|x-y\|$), then the model can admit long-range correlations; this was studied in the physics literature by Fyodorov and co-authors \cite{FyoSom2007, FyoBou2008}, and its complexity was recently computed by Auffinger and Zeng \cite{AufZen2020}.

Our generalization is reminiscent of the work of Fan, Mei, and Montanari on an upper bound for the complexity of the TAP free energy of the Sherrington-Kirkpatrick model \cite{FanMeiMon2021}. Indeed, via the Kac-Rice formula, the random matrix that appears in our problem is a full-rank deformation of GOE (see \eqref{eqn:parabolastandardform}). A similar random matrix, in fact with an additional low-rank deformation, appears in \cite{FanMeiMon2021}.


\subsubsection*{Results.}\
Let $\Crt_N^{\text{tot}}(\mc{H}_N)$ be the total number of critical points of $\mc{H}_N$ and $\Crt_N^{\min{}}(\mc{H}_N)$ be the total number of local minima.

\begin{defn}
For any $t > 0$ and any $\mu_D$ compactly supported in $(0,\infty)$, define
\begin{align}
    \Sigma^{\textup{tot}}(\mu_D,t) &= -\int_\R \log(\lambda) \mu_D(\diff \lambda) + \sup_{u \in \R} \left\{ \int_\R \log\abs{\lambda - u} (\rho_{\text{sc},t} \boxplus \mu_D)(\lambda) \diff \lambda - \frac{u^2}{2t}\right\}, \label{eqn:defsigmatot} \\
    \Sigma^{\textup{min}}(\mu_D,t) &= -\int_\R \log(\lambda) \mu_D(\diff \lambda) + \sup_{u \leq \mathtt{l}(\rho_{\text{sc},t} \boxplus \mu_D)} \left\{ \int_\R \log\abs{\lambda - u} (\rho_{\text{sc},t} \boxplus \mu_D)(\lambda) \diff \lambda - \frac{u^2}{2t}\right\}. \label{eqn:defsigmamin}
\end{align}
We will show that these suprema are achieved, possibly not uniquely.
\end{defn}

Theorem \ref{thm:asywell} below shows the relevance of these functions for complexity, and Theorem \ref{thm:softspins_threshold} analyzes the variational problems from \eqref{eqn:defsigmatot} and \eqref{eqn:defsigmamin} to describe the phase portrait in $\mu_D$ and $t$. In particular, the regimes of positive complexity for the total number of critical points and local minima coincide for any $\mu_D$, and the exponents describing near-critical behavior are universal in $\mu_D$. 

\begin{thm}
\label{thm:asywell}
We have
\[
    \lim_{N \to \infty} \frac{1}{N}\log \E[\Crt_N^{\textup{tot}}(\mc{H}_N)] = \Sigma^{\textup{tot}}(\mu_D,B''(0)).
\]
If in addition $D_N$ has no external outliers, in the sense that 
\begin{align*}
    \lim_{N \to \infty} \lambda_{\min{}}(D_N) = \mathtt{l}(\mu_D) \qquad \text{and} \qquad \lim_{N \to \infty} \lambda_{\max{}}(D_N) = \mathtt{r}(\mu_D),
\end{align*}
then
\[
    \limsup_{N \to \infty} \frac{1}{N}\log \E[\Crt_N^{\min{}}(\mc{H}_N)] = \Sigma^{\min{}}(\mu_D,B''(0)).
\]
\end{thm}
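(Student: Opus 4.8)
The plan is to run the Kac-Rice formula \eqref{eqn:kacrice} over $\mc{M} = \R^N$, exploit the stationarity of the isotropic field to make the conditioned Hessian basepoint-independent, and then feed the resulting single random matrix into the determinant asymptotics and Laplace-method machinery of \cite{BenBouMcK2021I}, exactly as in Metatheorem \ref{thm:metathm}.

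\textbf{Step 1 (Kac-Rice and reduction to one matrix).} Since $\nabla \mc{H}_N(x) = D_N x + \nabla V_N(x)$ with $\nabla V_N(x) \sim \mc{N}(0,\sigma_1^2 \Id)$ (the law independent of $x$ because $V_N$ is isotropic, hence has stationary increments), the Gaussian density is $\phi_x(0) = (2\pi\sigma_1^2)^{-N/2} e^{-\|D_N x\|^2/(2\sigma_1^2)}$, and $\int_{\R^N} \phi_x(0)\diff x = 1/\abs{\det D_N}$. Moreover $\nabla^2 \mc{H}_N(x) = D_N + \nabla^2 V_N(x)$; a parity argument gives $\nabla^2 V_N(x) \perp \nabla V_N(x)$, and a direct covariance computation for an isotropic field shows that, conditionally on $\{\nabla V_N(x) = 0\}$,
\[
    \nabla^2 \mc{H}_N(x) \overset{d}{=} D_N + \sqrt{B''(0)}\, u\, \Id + W_N,
\]
where $W_N$ is a GOE-type matrix whose empirical measure converges to $\rho_{\text{sc},B''(0)}$ and $u \sim \mc{N}(0,\tfrac1N)$ is an independent scalar (the ``random multiple of the identity'' carried by the Hessian of any isotropic field, with no dependence on $x$). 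Together with $\int \phi_x(0)\diff x = 1/\abs{\det D_N}$ (and a routine truncation to handle the non-compactness of $\R^N$, using $\|D_N x\| \geq \sqrt{\epsilon}\,\|x\|$), this yields
\[
    \E[\Crt_N^{\textup{tot}}(\mc{H}_N)] = \frac{1}{\abs{\det D_N}} \int_\R \sqrt{\tfrac{N}{2\pi}}\, e^{-Nu^2/2}\, \E_{W_N}\!\left[\abs{\det(D_N + \sqrt{B''(0)}u\,\Id + W_N)}\right] \diff u,
\]
which is of the form \eqref{eqn:overallproblem} with $m = 1$ and $H_N(u) = D_N + \sqrt{B''(0)}u\,\Id + W_N$.

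\textbf{Step 2 (asymptotics and Laplace).} For each fixed $u$, $H_N(u)$ is a deformed GOE whose empirical measure converges to $\mu_\infty(u) = \tau_{\sqrt{B''(0)}u}(\mu_D) \boxplus \rho_{\text{sc},B''(0)}$, where $\tau_a$ is translation by $a$; these limits exist and depend on $u$ only through a deterministic shift, hence Lipschitz-continuously, so \eqref{eqn:informal_detcon} and the Laplace argument of \cite{BenBouMcK2021I} apply. Using $\tfrac1N \log(1/\abs{\det D_N}) = -\int \log\lambda\, \hat{\mu}_{D_N}(\diff\lambda) \to -\int \log\lambda\, \mu_D(\diff\lambda)$ (via \eqref{eqn:speed_of_environment} and the uniform spectral gap away from $0$), and substituting $w = -\sqrt{B''(0)}u$, one obtains
\[
    \lim_{N\to\infty} \frac1N \log \E[\Crt_N^{\textup{tot}}(\mc{H}_N)] = -\int \log\lambda\, \mu_D(\diff\lambda) + \sup_{w\in\R}\left\{ \int \log\abs{\lambda - w}(\rho_{\text{sc},B''(0)} \boxplus \mu_D)(\diff\lambda) - \frac{w^2}{2B''(0)} \right\} = \Sigma^{\textup{tot}}(\mu_D, B''(0)),
\]
after checking coercivity and finiteness of the objective so that the supremum is attained. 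This proves the first assertion.

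\textbf{Step 3 (minima) and the main obstacle.} The same reduction gives $\E[\Crt_N^{\min{}}(\mc{H}_N)] = \frac{1}{\abs{\det D_N}} \int_\R \sqrt{\tfrac{N}{2\pi}} e^{-Nu^2/2} \E_{W_N}[\abs{\det H_N(u)}\mathbf{1}_{H_N(u) \geq 0}]\diff u$. Setting $\mc{G} = \{u : \mathtt{l}(\mu_\infty(u)) \geq 0\} = \{u : \sqrt{B''(0)}u \geq -\mathtt{l}(\rho_{\text{sc},B''(0)} \boxplus \mu_D)\}$, the no-external-outliers hypothesis on $D_N$ — combined with the absence of outliers of $W_N$ and edge large-deviation bounds — forces $\lambda_{\min}(H_N(u))$ to be governed by $\mathtt{l}(\mu_\infty(u))$ rather than by a stray eigenvalue of $D_N$, so that at exponential scale $\E[\abs{\det H_N(u)}\mathbf{1}_{H_N(u)\geq 0}]$ equals $\E[\abs{\det H_N(u)}]$ for $u \in \mc{G}$ and is negligible for $u \notin \mc{G}$; this is \eqref{eqn:overviewminima}, established in \cite{BenBouMcK2021I}. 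Restricting the Laplace analysis to $\mc{G}$ (a closed half-line) and undoing the shift gives the constrained variational problem $\Sigma^{\min{}}(\mu_D, B''(0))$, and in particular $\limsup \leq \Sigma^{\min{}}$. The main obstacle is the matching lower bound: one must show that, near a maximizer $w^\ast$ of the constrained problem, the events $\{H_N(u) \geq 0\}$ and $\{\abs{\det H_N(u)} \geq e^{N\int\log\abs{\lambda}\mu_\infty(u,\diff\lambda) - o(N)}\}$ occur \emph{simultaneously} with probability $e^{-o(N)}$. This is straightforward when $w^\ast$ lies in the interior of $\mc{G}$ (where $\{H_N(u)\geq 0\}$ has probability tending to $1$ and the log-determinant concentrates), but delicate when $w^\ast$ sits on $\partial\mc{G}$, where the limiting Hessian is exactly critical; it is precisely this point that is handled only up to a $\limsup$ for general $\mu_D$ and $D_N$, and it is where the no-outlier hypothesis and the conditional-determinant estimates of \cite{BenBouMcK2021I} are essential.
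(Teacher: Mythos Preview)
Your proposal is correct and follows essentially the same route as the paper: Kac-Rice on $\R^N$, isotropy to remove the basepoint dependence and factor out $1/\det D_N$, the identification $\nabla^2\mc{H}_N(\mathbf{0})\overset{d}{=}D_N+\xi\Id+W_N$, and then the determinant/Laplace results of \cite{BenBouMcK2021I} (their Theorem~4.1 for totals and Theorem~4.5 for minima, the latter using the no-outlier hypothesis exactly as you describe). The only place where the paper is more explicit than your Step~2 is in checking the quantitative input $W_1(\mu_N(u),\mu_\infty(u))\leq N^{-\kappa}$: this needs both the MDE stability of Section~\ref{sec:stability} (to pass from the true GOE operator $\mc{S}_N$ to the Wigner-type $\mc{S}'_N$) and the free-probability bound $d_{\textup{L}}(\rho_{\text{sc},t}\boxplus\hat\mu_{D_N},\rho_{\text{sc},t}\boxplus\mu_D)\leq d_{\textup{L}}(\hat\mu_{D_N},\mu_D)$ together with \eqref{eqn:speed_of_environment}, rather than just the qualitative Lipschitz dependence on $u$ you invoke.
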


\begin{rem}
We emphasize that Theorem \ref{thm:asywell} shows that special directions in the environment (meaning outliers in $D_N$) have no effect on the total number of critical points at exponential scale, as long as there are $o(N)$ many of them. We leave open the effect of special directions on minima.
\end{rem}

We define the important threshold 
\begin{equation}
\label{eqn:softspins_threshold}
	t_c = t_c(\mu_D) = \left( \int_\R \frac{\mu_D(\diff \lambda)}{\lambda^2} \right)^{-1}.
\end{equation}
For $t > t_c$, we write $c = c(t,\mu_D)$ for the unique positive value satisfying 
\[
	\frac{1}{t} = \int_\R \frac{1}{\lambda^2 + t^2 c} \, \mu_D(\diff \lambda)
\]
and use this to define
\[
	v = v(t,\mu_D) = -t \int_\R \frac{\lambda}{\lambda^2+t^2 c(t,\mu_D)}  \, \mu_D(\diff \lambda).
\]
We also need the positive numbers
\begin{equation}
\label{eqn:singularityconstants}
    c_{\textup{tot}}(\mu_D) = \frac{\left(\int_\R \frac{\mu_D(\diff \lambda)}{\lambda^2}\right)^4}{4 \left( \int_\R \frac{\mu_D(\diff \lambda)}{\lambda^4}\right)}, \qquad c_{\textup{min}}(\mu_D) = \frac{\left(\int_\R \frac{\mu_D(\diff \lambda)}{\lambda^2}\right)^6}{24\left(\int_\R \frac{\mu_D(\diff \lambda)}{\lambda^3} \right)^2}.
\end{equation}

\begin{thm}
\label{thm:softspins_threshold}
For every $t > 0$ and every probability measure $\mu_D$ compactly supported in $(0,\infty)$, 
\begin{enumerate}[label=(\roman*)]
\item if $t \leq t_c$, then $\Sigma^{\textup{tot}}(\mu_D, t) = \Sigma^{\textup{min}}(\mu_D, t) = 0$;
\item if $t > t_c$, then $\Sigma^{\textup{tot}}(\mu_D, t) > \Sigma^{\textup{min}}(\mu_D, t) > 0$, and these are given by
\begin{align}
    \Sigma^{\textup{min}}(\mu_D,t) &= -\int_\R \log(\lambda) \mu_D(\diff \lambda) + \int_\R \log\abs{\lambda - \ell} (\rho_{\text{sc},t} \boxplus \mu_D)(\lambda) \diff \lambda - \frac{\ell^2}{2t}, \label{eqn:softspins_minima_formula}  \\
    \Sigma^{\textup{tot}}(\mu_D,t) &= -\int_\R \log(\lambda) \mu_D(\diff \lambda) + \int_\R \log\abs{\lambda - v} (\rho_{\text{sc},t} \boxplus \mu_D)(\lambda) \diff \lambda - \frac{v^2}{2t}, \label{eqn:softspins_total_formula}
\end{align}
where $\ell = \mathtt{l}(\rho_{\text{sc},t} \boxplus \mu_D)$ and $v$ is as above; and
\item for supercritical $t$, we have
\begin{align*}
    \Sigma^{\textup{tot}}(\mu_D,t) &= c_{\textup{tot}}(\mu_D) \cdot (t-t_c)^2 + O((t-t_c)^3), \\
    \Sigma^{\textup{min}}(\mu_D,t) &= c_{\textup{min}}(\mu_D) \cdot (t-t_c)^3 + O((t-t_c)^4),
\end{align*}
with $c_{\textup{tot}}(\mu_D)$, $c_{\textup{min}}(\mu_D)$ as in \eqref{eqn:singularityconstants}.
\end{enumerate}
\end{thm}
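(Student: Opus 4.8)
The plan is to analyze the one-dimensional variational problem
\[
    \Psi(u) = \int_\R \log\abs{\lambda - u}\,(\rho_{\text{sc},t}\boxplus\mu_D)(\diff\lambda) - \frac{u^2}{2t}
\]
by differentiating under the integral and exploiting the structure of the free convolution with a semicircle. First I would record that the Stieltjes transform $m$ of $\rho_D := \rho_{\text{sc},t}\boxplus\mu_D$ satisfies the subordination relation $m(z) = m_{\mu_D}(z - t\,m(z))$, equivalently $z = \omega + t\, m_{\mu_D}(\omega)$ along the subordination function $\omega(z) = z - t\, m(z)$. The key point is that on the real axis, away from the support, $\Psi'(u) = \re\, m(u) - u/t = -\,\omega(u)/t \cdot (\text{something})$; more precisely, for real $u$ one gets $\Psi'(u) = m(u) - u/t$, and combining with the subordination identity $u = \omega(u) + t\,m_{\mu_D}(\omega(u))$ and $m(u) = m_{\mu_D}(\omega(u))$ shows $\Psi'(u) = -\omega(u)/t$. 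Thus critical points of $\Psi$ correspond to zeros of the subordination function $\omega$. I would then change variables from $u$ to $\omega$: writing $u = \omega + t\, m_{\mu_D}(\omega) = \omega + t\int \frac{\mu_D(\diff\lambda)}{\lambda - \omega}$, the stationarity condition $\Psi'(u)=0$ becomes simply $\omega = 0$, which via the change of variables corresponds to $u = v$ with $v$ defined (after introducing the regularization $c$ for the complex branch) exactly as in the statement — this is where the formulas for $c$ and $v$ come from, $c$ being $-\,(\im\,\omega)^2/t^2$ or similar at the relevant branch point, chosen so that $\int \frac{\mu_D(\diff\lambda)}{\lambda^2+t^2c} = 1/t$ encodes $u|_{\omega} = 0$ on the correct sheet.

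Next I would settle the dichotomy in part (i)–(ii) by the sign of $\Psi'$ near $u=0$, or equivalently by whether the real equation $\int \frac{\mu_D(\diff\lambda)}{\lambda^2} = 1/t$ has the right sign: at $u=0$, $\Psi'(0) = m(0) - 0 = m_{\mu_D}(\omega(0))$, and a short computation shows the transition is governed by comparing $t$ with $t_c = (\int \lambda^{-2}\mu_D(\diff\lambda))^{-1}$. When $t \le t_c$ the function $u \mapsto \Psi(u)$ is maximized at... one must check that the supremum in \eqref{eqn:defsigmatot} equals $\int\log\lambda\,\mu_D(\diff\lambda)$, i.e. equals the non-variational prefactor, so that $\Sigma^{\textup{tot}} = 0$; the cleanest route is to verify directly that $\sup_u \Psi(u) = \int \log\lambda\,\mu_D(\diff\lambda)$ when $t\le t_c$, for instance by checking $\Psi(0) \ge$ this value is violated / that the maximizer sits at an interior point where one can evaluate $\Psi$ in closed form using $\rho_D$'s defining relations. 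For the minima functional $\Sigma^{\textup{min}}$, the constraint $u \le \mathtt{l}(\rho_D)$ is active exactly when the unconstrained maximizer $v$ would exceed $\mathtt{l}(\rho_D)$; I would show that this happens precisely for $t > t_c$ as well (so the two positivity regimes coincide), and that on $t > t_c$ the constrained sup is attained at the boundary $u = \ell := \mathtt{l}(\rho_D)$, giving \eqref{eqn:softspins_minima_formula}. Strict positivity and the strict inequality $\Sigma^{\textup{tot}} > \Sigma^{\textup{min}}$ follow because $v < \ell$ strictly in this regime (the unconstrained optimum lies strictly inside the forbidden region) together with strict concavity of $\Psi$ between $v$ and $\ell$ — or, failing concavity, monotonicity of $\Psi$ on $[v,\ell]$ which is immediate from $\Psi' = -\omega/t$ having a fixed sign there.

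For part (iii), the near-critical expansion, I would expand everything in $\varepsilon := t - t_c$. Set $s = 1/t$; the defining equation for $c$ is $F(c,s) := \int \frac{\mu_D(\diff\lambda)}{\lambda^2 + c/s^2} = s$ wait — I will use the form $\int \frac{\mu_D(\diff\lambda)}{\lambda^2+t^2c} = 1/t$, note $c \downarrow 0$ as $t \downarrow t_c$, and Taylor-expand the integrand in $t^2 c$: to leading order $\int \lambda^{-2}\mu_D - t^2 c \int \lambda^{-4}\mu_D + \cdots = 1/t$, while $1/t - 1/t_c \approx -\varepsilon/t_c^2$ and $\int\lambda^{-2}\mu_D = 1/t_c$; solving gives $c \sim \frac{\varepsilon}{t_c^4 \int\lambda^{-4}\mu_D}$ to leading order (constants to be tracked carefully). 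Then $\Sigma^{\textup{tot}}(\mu_D,t) = \Psi(v) - \int\log\lambda\,\mu_D(\diff\lambda)$, and since $v \to 0$ and $\Psi(0) - \int\log\lambda\,\mu_D = 0$ at criticality with $\Psi'(0)$ also vanishing at criticality (that is the content of $t = t_c$), one gets $\Sigma^{\textup{tot}} = \tfrac12 \Psi''(0)\big|_{\text{crit}} v^2 + \cdots$ combined with $v = O(\sqrt{c}) = O(\sqrt\varepsilon)$... this would naively give a \emph{linear} term, so the cancellations must be organized more carefully: the right bookkeeping is to substitute the change of variables $u \leftrightarrow \omega$ directly into $\Sigma^{\textup{tot}}$, obtaining a closed expression in $c$ alone, and then expand that expression in $c$ and finally re-expand $c$ in $\varepsilon$; the quadratic vanishing $c_{\textup{tot}}(\mu_D)(t-t_c)^2$ will emerge from the product of a factor $\sim c \sim \varepsilon$ with another factor $\sim c \sim \varepsilon$. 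For the minima, $\Sigma^{\textup{min}} = \Psi(\ell) - \int\log\lambda\,\mu_D$ with $\ell = \mathtt{l}(\rho_{\text{sc},t}\boxplus\mu_D)$; here I would use the Appendix result (square-root edge behavior of $\rho_{\text{sc},t}\boxplus\mu_D$) to control how $\ell$ and the local density near $\ell$ depend on $\varepsilon$, and the extra power of $\varepsilon$ (cubic vs.\ quadratic) comes from the gap $\ell - v$ being itself of order $\varepsilon$ together with $\Psi' = -\omega/t$ vanishing linearly, so $\Sigma^{\textup{tot}} - \Sigma^{\textup{min}} = \int_v^\ell(-\omega(u)/t)\,\diff u \sim \varepsilon^3$ — and since $\Sigma^{\textup{tot}} \sim \varepsilon^2$ it is $\Sigma^{\textup{min}}$ that is forced to start at order $\varepsilon^3$.

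\medskip
\noindent\textbf{Main obstacle.} The delicate part is the bookkeeping in (iii): establishing the precise constants $c_{\textup{tot}}, c_{\textup{min}}$ and, especially, verifying that the seemingly leading-order linear and (for minima) quadratic terms cancel. This requires handling the branch of the subordination function correctly across the critical point and expanding the free convolution $\rho_{\text{sc},t}\boxplus\mu_D$ — in particular its left edge $\ell$ and the behavior of $\int\log\abs{\lambda-\ell}\,(\rho_{\text{sc},t}\boxplus\mu_D)(\diff\lambda)$ — uniformly in $t$ near $t_c$, for which the square-root edge estimate proved in the Appendix is exactly the needed input. Everything else (parts (i), (ii), the identification of $v$, the coincidence of the positivity regimes, the strict inequality) should follow cleanly from the identity $\Psi'(u) = -\omega(u)/t$ and the subordination relation.
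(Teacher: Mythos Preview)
Your entry point via the subordination identity $\Psi'(u) = -\omega(u)/t$ (with $\omega(u)=u+tm_t(u)$ in the paper's sign convention) is correct and is a genuinely different organizing principle from the paper's. The paper proceeds \emph{dynamically}: it tracks the trajectory $t\mapsto u_t := -t\int\lambda^{-1}\mu_D(\diff\lambda)$, uses the Burgers equation $\partial_t m_t = m_t\partial_z m_t$ to show $m_t(u_t)\equiv m_0(0)$ and hence $F(u_t,t)\equiv 0$ for $t<t_c$, then tracks $t\mapsto F(\ell_t,t)$ using $\partial_t\ell_t=-\re m_t(\ell_t)$ and the Guionnet--Ma\"ida inequality $\omega(\ell_t)\le \mathtt l(\mu_D)$ to establish the sign change of $\Psi'(\ell_t)$ at $t=t_c$. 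For (iii) the paper differentiates $F(v_t,t)$ and $F(\ell_t,t)$ repeatedly in $t$ and evaluates at $t_c$, using the implicit-function theorem on the Pastur relations to control $y_t$, $v_t$, $m_t(\ell_t)$; the square-root edge result (Appendix) is used only to justify differentiability of $m_t(\ell_t)$.

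That said, several steps in your proposal are currently genuine gaps rather than shortcuts. First, you never establish concavity of $u\mapsto\Psi(u)$; the paper proves this separately (via the Pastur relation) and uses it to pass from ``$\Psi'(u_t)=0$'' to ``$u_t$ is the global maximizer,'' and from ``$\Psi'(\ell_t)>0$'' to ``the constrained supremum over $u\le\ell_t$ is attained at $\ell_t$.'' Your fallback ``monotonicity from the sign of $-\omega/t$'' only controls $\Psi'$ outside $\supp(\mu_t)$; inside the support you need $\re\omega$, whose sign is not immediate. Second, you do not actually show $\sup_u\Psi(u)=\int\log\lambda\,\mu_D(\diff\lambda)$ for $t\le t_c$: knowing the critical point $\omega=0$ does not evaluate $\Psi$ there. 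The paper gets $F(u_t,t)=0$ from $\frac{d}{dt}F(u_t,t)=0$ via Burgers; a static alternative would be the log-potential identity $\int\log|\lambda-u|\,\mu_t(\diff\lambda)=\int\log|\lambda-\omega(u)|\,\mu_D(\diff\lambda)+\tfrac{t}{2}m_t(u)^2$ for $u$ outside the support, but you would need to state and prove it. Third, your inequality ``$v<\ell$ strictly'' is backwards: for $t>t_c$ the unconstrained maximizer $v$ lies \emph{inside} the support, so $v\ge\ell$, and this is why the constraint $u\le\ell$ binds. Finally, your treatment of (iii) recognizes the bookkeeping problem but does not resolve it; the paper's route of computing successive $t$-derivatives of $F(v_t,t)$ and $F(\ell_t,t)$ and using $\partial_u F|_{u=\ell_t}=0$ at $t=t_c$ makes the cancellations automatic and yields the constants directly.
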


The proof of this theorem relies on a dynamic approach, like for results in Section \ref{subsec:elasticmanifold}. We also use two important inputs: (i) the Burgers' equation satisfied by the Stieltjes transform of the semicircle distribution, and (ii) an inequality from free probability, due to Guionnet and Ma{\"i}da, regarding the subordination function of the free convolution at the edge. We also need a new result in free probability, possibly of independent interest, which we prove in Appendix \ref{sec:freeconvolution}: The free convolution of any measure with semicircle decays at least as fast as a square-root at its extremal edges.

We remark that it is not obvious that the same threshold $t_c$ should work both for total critical points and for local minima, and the analogue is false in closely related models. For example, consider the Hamiltonian \eqref{eqn:fyodorov2004model}, i.e. $H_N(x) = \frac{\mu}{2}\|x\|^2 + V_N(x)$, but defined over $\{x \in \R^N : \|x\| \leq R\sqrt{N}\}$ for some fixed $R > 0$ rather than over the whole space. Fyodorov, Sommers, and Williams \cite{FyoSomWil2007} showed that, for some choices of $R$, the complexity of total critical points is positive but the complexity of minima vanishes. (See \cite{BraDea2007} for related independent work.) But \cite{FyoWil2007} proved the analogue of point 1 for their model, discussed above, which is defined on the full space. See \cite[Section 2.4]{FyoWil2007} for further discussion of the differences between the full-space models like ours with ``smooth confining potentials'' and the ``hard-wall confining potentials'' of \cite{FyoSomWil2007}. 

\begin{example}
The model \eqref{eqn:fyodorov2004model} is a special case when $D_N = \mu \Id$ for some scalar $\mu > 0$. In our notation, this corresponds to $\mu_D = \delta_\mu$. Theorem \ref{thm:asywell} yields
\begin{align}
\label{eqn:fyodorov_formulas}
\begin{split}
    \Sigma^{\textup{tot}}(\delta_\mu,B''(0)) &= \begin{cases} \frac{1}{2}\left(\frac{\mu^2}{B''(0)} - 1 \right) - \log\left(\frac{\mu}{\sqrt{B''(0)}}\right) & \text{if } \mu \leq \mu_c := \sqrt{B''(0)} \quad \text{(equivalently, if $\int \frac{\mu_D(\diff t)}{t^2} \geq \frac{1}{B''(0)}$)}, \\ 0 & \text{if } \mu \geq \mu_c, \end{cases} \\
    \Sigma^{\textup{min}}(\delta_\mu,B''(0)) &= \begin{cases} \frac{1}{2}\left[ -3 - \log\left( \frac{\mu^2}{B''(0)}\right) + 4 \cdot \frac{\mu}{\sqrt{B''(0)}} - \frac{\mu^2}{B''(0)} \right] & \text{if } \mu \leq \mu_c, \\
    0 & \text{if } \mu \geq \mu_c. \end{cases}
\end{split}
\end{align}
These recover results of \cite[Equations (18-19)]{Fyo2004} and \cite[Equation (81)]{FyoWil2007}, respectively. We also recover their results on decay near criticality, as one can check by hand that the behavior predicted by Theorem \ref{thm:softspins_threshold} (which gives $c_{\textup{tot}}(\delta_\mu) = \frac{1}{4\mu^4}$ and $c_{\textup{min}}(\delta_\mu) = \frac{1}{24\mu^6}$ here) is correct.
\end{example}

\begin{example}
We give one more explicit example, namely when
\[
    \mu_D(\diff x) = \rho_{\text{sc}}^{(m,\sigma^2)}(\diff x) = \frac{\sqrt{(4\sigma^2 - (x-m)^2)_+}}{2\pi\sigma^2} \diff x
\]
is the semicircle law of mean $m$ and variance $\sigma^2$. (Notice we need $\mu_D(\diff x)$ supported in $(0,\infty)$, equivalently $m - 2\sigma > 0$, for the model to be well-defined.) In this case we have 
\begin{align*}
    \Sigma^{\textup{tot}}(\mu_D,B''(0)) &= \begin{cases} \frac{m}{4\sigma^2} \left( \sqrt{m^2-4\sigma^2} - \frac{m}{1+2\frac{\sigma^2}{B''(0)}} \right) - \log \left( \frac{m+\sqrt{m^2-4\sigma^2}}{2\sqrt{B''(0) + \sigma^2}} \right) & \text{if } \int \frac{\mu_D(\diff t)}{t^2} = \frac{-1+\frac{m}{\sqrt{m^2-4\sigma^2}}}{2\sigma^2} \geq \frac{1}{B''(0)}, \\ 0 & \text{if } \int \frac{\mu_D(\diff t)}{t^2} \leq \frac{1}{B''(0)}, \end{cases} \\
    \Sigma^{\textup{min}}(\mu_D,B''(0)) &= \begin{cases}
    -1 + \frac{m(-m+\sqrt{m^2-4\sigma^2})}{4\sigma^2} - \frac{m^2+4\sigma^2-4m\sqrt{B''(0)+\sigma^2}}{2B''(0)} - \log \left( \frac{m+\sqrt{m^2-4\sigma^2}}{2\sqrt{B''(0)+\sigma^2}} \right) & \text{if } \int \frac{\mu_D(\diff t)}{t^2} \geq \frac{1}{B''(0)}, \\
    0 & \text{if } \int \frac{\mu_D(\diff t)}{t^2} \leq \frac{1}{B''(0)}. \end{cases}
\end{align*}
As a consistency check, in the limit $\sigma \downarrow 0$ we obtain exactly the formulas \eqref{eqn:fyodorov_formulas} with $\mu$ replaced by $m$. 
\end{example}


\section{Stability of the Matrix Dyson Equation}
\label{sec:stability}

In this section, our goal is to give general results on the stability of the Matrix Dyson Equation. For example, the MDE for GOE matrices is
\[
	\Id + \left(z\Id + \frac{1}{N}\Tr(M_N(z)) + \frac{1}{N}M_N(z)^T\right)M_N(z) = 0, \quad \im M_N(z) > 0,
\]
but $\frac{1}{N}M_N(z)^T$ should be thought of as an error, and it is more convenient to consider the unique solution $M'_N(z)$ to
\[
	\Id + \left(z\Id + \frac{1}{N}\Tr(M'_N(z))\right)M'_N(z) = 0, \quad \im M'_N(z) > 0.
\]
In this section we prove stability of such MDEs to conclude $\frac{1}{N}\Tr M_N(z) \approx \frac{1}{N}\Tr M'_N(z)$ for their respective unique solutions. Similar arguments have appeared in papers of Erd{\H o}s and collaborators, for example \cite{AltErdKruNem2019}, but in more involved contexts where an exact deterministic solution of the MDE is compared to a random near-solution with small (random) error. Since we are interested in slightly different perturbations of the MDE, and only in the deterministic case, we adapt their arguments to give a short self-contained proof here.

Fix a sequence $(P_N)_{N=1}^\infty$ of positive integers (typically we take $P_N = N$ or $P_N$ independent of $N$). It is known \cite{HelFarSpe2007} that, whenever $\mc{S} : \C^{P_N \times P_N} \to \C^{P_N \times P_N}$ is a linear operator that is self-adjoint with respect to the inner product $\ip{R,T} = \Tr(R^\ast T)$ and that preserves the cone of positive-semi-definite matrices, and whenever $a(u) \in \R^{P_N \times P_N}$ is symmetric, the problem
\begin{equation}
\label{eqn:generic_mde}
    -M^{-1}(u,z) = z\Id - a(u) + \mc{S}[M(u,z)] \quad \text{subject to} \quad \im M(u,z) > 0
\end{equation}
has a unique solution $M(u,z) \in \C^{P_N \times P_N}$ for each $z \in \mathbb{H}$ and $u \in \R^m$, and 
\begin{equation}
\label{eqn:mde_trivialbd}
    \|M(u,z)\| \leq \frac{1}{\eta}.
\end{equation}
Fix two sequences $(\mc{S}_N)_{N=1}^\infty$ and $(\mc{S}'_N)_{N=1}^\infty$ of such operators and two sequences $(a_N(u))_{N=1}^\infty$ and $(a'_N(u))_{N=1}^\infty$ of such matrices (i.e., $\mc{S}_N$ and $\mc{S}'_N$ act on $\C^{P_N \times P_N}$, and $a_N(u), a'_N(u) \in \R^{P_N \times P_N}$), and consider the associated solutions:
\[
    \mc{S}_N \text{ and } a_N(u) \quad \text{induce} \quad M_N(u,z), \qquad \mc{S}'_N \text{ and } a'_N(u) \quad \text{induce} \quad M'_N(u,z).
\]
In this section, our goal is to show that $M_N$ and $M'_N$ are close if $\mc{S}_N$ and $\mc{S}'_N$ are close and $a_N(u)$ and $a'_N(u)$ are close; we will use this to help identify $\mu_\infty$ for both of our models.

\begin{lem}
\label{lem:generic_distance_stability}
Suppose that, for some $\kappa > 0$,
\begin{align}
    \sup_N \max(\|a_N(u)\|,\|a'_N(u)\|) &\leq \kappa\max(1,\|u\|), \label{eqn:mde_bddmean} \\
    \|\mc{S}'_N\|_{\text{hs} \to \|\cdot\|} &\leq \kappa, \label{eqn:mde_bddoperator} \\
    \|\mc{S}_N - \mc{S}'_N\|_{\|\cdot\| \to \|\cdot\|} &\leq \frac{\kappa}{N}, \label{eqn:mde_closeoperators} \\
    \|a_N(u) - a'_N(u)\| &\leq \frac{\kappa\max(1,\|u\|)}{N}. \label{eqn:mde_closemeans}
\end{align}

If $0 < \gamma < \frac{1}{50}$, then for each $R$ and each $A$ there exists $\delta > 0$ with 
\[
    \sup_{u \in B_R} \frac{1}{N} \int_{-A}^A \abs{\Tr(M_N(u,E+\ii N^{-\gamma})) - \Tr(M'_N(u,E+\ii N^{-\gamma}))} \diff E \leq \frac{1}{\delta} N^{-\delta}.
\]
\end{lem}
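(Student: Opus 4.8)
The plan is to run a standard contraction/perturbation argument for the MDE in the regime $\eta = N^{-\gamma}$, following the strategy in the Erd\H{o}s school (e.g.\ \cite{AltErdKruNem2019}) but in the simplified deterministic setting. Write $M = M_N(u,z)$ and $M' = M'_N(u,z)$ for the two solutions, and set $\Delta = M - M'$. Subtracting the two equations \eqref{eqn:generic_mde} and multiplying on the left by $M$ and on the right by $M'$, one gets an identity of the schematic form
\[
    \Delta = M\bigl( (a_N(u)-a'_N(u)) - (\mc{S}_N-\mc{S}'_N)[M] - \mc{S}'_N[\Delta] \bigr) M'.
\]
Thus $\Delta = \mc{L}[\Delta] + \mc{E}$, where $\mc{L}[\cdot] = -M\,\mc{S}'_N[\,\cdot\,]M'$ is a linear ``stability operator'' and $\mc{E}$ collects the two genuinely small inputs. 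By \eqref{eqn:mde_trivialbd} we have $\|M\|,\|M'\| \le \eta^{-1} = N^{\gamma}$, and by \eqref{eqn:mde_bddoperator}, \eqref{eqn:mde_closeoperators}, \eqref{eqn:mde_closemeans} the error term satisfies $\|\mc{E}\| \le N^{2\gamma}\cdot \kappa\max(1,\|u\|)(1+N^{\gamma})/N \le C_R N^{-1+3\gamma}$ uniformly on $B_R$ (using the a priori bound $\|M\|\le N^\gamma$ to control $\mc{S}_N[M]$ via \eqref{eqn:mde_bddoperator} after splitting $\mc{S}_N = \mc{S}'_N + (\mc{S}_N - \mc{S}'_N)$, or more cleanly writing the correction as $(\mc{S}_N-\mc{S}'_N)[M]$ with operator norm $\le \kappa/N$).

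The crux is then to invert $\Id - \mc{L}$ with a bound that does not blow up too fast in $\eta^{-1}$. The naive bound $\|\mc{L}\| \le \|M\|\|M'\|\|\mc{S}'_N\| \le \kappa N^{2\gamma}$ is useless (it exceeds $1$). Instead one uses the self-improving structure of the MDE: taking imaginary parts of \eqref{eqn:generic_mde} gives $\im M = M^\ast(\eta \Id + \mc{S}_N[\im M])M \ge \eta M^\ast M$, so $M^\ast M \le \eta^{-1}\im M$, and hence for any $T$, $\Tr(M T M^\ast T^\ast)$-type quantities are controlled by $\eta^{-1}\Tr(\im M\, \cdot)$; this is exactly the mechanism that makes $\|\mc{L}\|$ effectively of size $1 - c\eta$ rather than $\eta^{-2}$ when $\mc{S}'_N$ is flat. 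Concretely, I would establish a bound of the form $\|(\Id - \mc{L})^{-1}\|_{\|\cdot\|\to\|\cdot\|} \le C\eta^{-1} = CN^{\gamma}$ (or at worst a small power $N^{C\gamma}$), valid uniformly for $u \in B_R$; for the flat operators arising here one can alternatively argue by a direct Gr\"onwall/bootstrap on the scalar quantity $\frac1N\Tr\Delta$ since $\mc{S}'_N[X]$ depends on $X$ only through $\frac1N\Tr X$ (the ``reduced'' GOE-type MDE), which turns the fixed-point equation into a genuinely scalar one and makes the stability constant completely explicit. Combining, $\|\Delta\| \le \|(\Id-\mc{L})^{-1}\|\,\|\mc{E}\| \le C_R N^{\gamma}\cdot N^{-1+3\gamma} = C_R N^{-1+4\gamma}$ pointwise in $E$, uniformly over $u\in B_R$.

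Finally I would integrate over $E \in [-A,A]$: since the bound just obtained is already uniform in $E$ (the only $E$-dependence entered through $\|M\|\le\eta^{-1}$, which is $E$-independent), we get
\[
    \sup_{u\in B_R}\frac1N\int_{-A}^{A}\abs{\Tr M_N(u,E+\ii N^{-\gamma}) - \Tr M'_N(u,E+\ii N^{-\gamma})}\,\diff E \;\le\; 2A\,C_R\, N^{-1+4\gamma},
\]
and since $\gamma < 1/50 < 1/4$ the exponent $1 - 4\gamma$ is a positive constant, so choosing $\delta = \min(1-4\gamma, \text{const})$ small enough absorbs the constants and yields the claimed $\frac1\delta N^{-\delta}$. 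The main obstacle I anticipate is the second step---pinning down the $\eta$-dependence of the stability operator $(\Id-\mc{L})^{-1}$ with a clean, uniform-in-$u$ bound; everything else is bookkeeping with the hypotheses \eqref{eqn:mde_bddmean}--\eqref{eqn:mde_closemeans}. If the general operator bound is awkward, the scalar reduction available for the flat $\mc{S}'_N$ in our applications is a safe fallback, at the cost of slightly less generality than \eqref{eqn:generic_mde} suggests.
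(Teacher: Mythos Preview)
Your setup is correct: subtracting the two MDEs and sandwiching gives exactly the identity $\Delta = -M\,\mc{S}'_N[\Delta]\,M' + \mc{E}$ you write, and your bound $\|\mc{E}\| \le C_R N^{-1+3\gamma}$ is right. The gap is precisely where you flag it. Your stability operator $\mc{L}[T] = -M\,\mc{S}'_N[T]\,M'$ involves the \emph{unknown} $M$, so there is no off-the-shelf bound on $(\Id-\mc{L})^{-1}$; the imaginary-part identity $\im M \ge \eta M^\ast M$ controls the symmetric ``saturated'' operator $|M|\mc{S}'_N[\cdot]|M|$, not your mixed one, and after rotating phases the resulting bound on the actual stability inverse is far worse than $\eta^{-1}$ (indeed the literature bound the paper imports is of order $\eta^{-22}$, which is why the constraint $\gamma<1/50$ appears). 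So the hoped-for linear step $\|\Delta\|\le \|(\Id-\mc{L})^{-1}\|\|\mc{E}\|\le CN^{\gamma}\cdot N^{-1+3\gamma}$ does not go through as written.

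The paper's argument differs in two essential ways. First, it uses the stability operator built from $M'$ alone, $\ms{L}_N[T]=T-M'\mc{S}'_N[T]M'$, for which \cite{AltErdKruNem2019} provides an explicit a priori bound $\|\ms{L}_N^{-1}\|\le C(1+\eta^{-2}+\|(M')^{-1}\|^9\eta^{-13})$; writing $M=M'+\Delta$ then produces a \emph{quadratic} inequality in $\|\Delta\|$ rather than your linear one. Second, the quadratic inequality alone does not pin down which branch $\|\Delta\|$ sits on, so the paper runs a two-step continuity argument in $\eta$: at large $\eta$ the trivial bound $\|\Delta\|\le 2/\eta$ forces the small branch, and one then slides $\eta$ down to $N^{-\gamma}$ continuously (using holomorphy of $M,M'$) while the two branches stay separated. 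This yields $\|\Delta\|\le C(N\eta^{25})^{-1}$, hence the integrated bound $N^{25\gamma-1}$. Your scalar-reduction fallback would indeed work for the specific flat $\mc{S}'_N$'s in the applications, but it does not prove the lemma as stated.
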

\begin{proof}
Notice that $M_N(u,z)$ almost solves the MDE \eqref{eqn:generic_mde} with $\mc{S} = \mc{S}'_N$ and $a(u) = a'_N(u)$; in fact,
\[
    -M_N(u,z)^{-1} = z\Id - a'_N(u) + \mc{S}'_N[M_N(u,z)] + \underbrace{(\mc{S}_N-\mc{S}'_N)[M_N(u,z)] + a'_N(u) - a_N(u)}_{=:d_N(u,z)},
\]
and $d_N(u,z)$ is an error term in the sense that, if $u \in B_R$ (we take $R \geq 1$ without loss of generality), from \eqref{eqn:mde_trivialbd}, \eqref{eqn:mde_closeoperators}, and \eqref{eqn:mde_closemeans} we have
\begin{equation}
\label{eqn:genericmde_dsmall}
    \|d_N(u,z)\| \leq \frac{\kappa}{N\eta} + \frac{\kappa\max(1,\|u\|)}{N} \leq \frac{\kappa(1+\eta R)}{N\eta}.
\end{equation}
We will apply standard stability theory of the MDE, which lets us conclude from this that $M_N$ is close to $M'_N$. In fact, our goal is significantly easier than that in the literature, because our approximate solution to the MDE is deterministic. In the generality we need, this theory has been developed in \cite{AltErdKruNem2019}, and manipulations exactly like those preceding (4.25) there let us conclude
\begin{equation}
\label{eqn:generic_first_stability_bd}
    \|M_N(u,z) - M'_N(u,z)\| \leq \|\ms{L}_N^{-1}(u,z)\| \|M'_N(u,z)\| \bigg( \|d_N(u,z)\| \|M_N(u,z)\| + \|\mc{S}'_N \| \|M_N(u,z) - M'_N(u,z)\|^2 \bigg).
\end{equation}
Here $\ms{L}_N(u,z) : \C^{P_N \times P_N} \to \C^{P_N \times P_N}$ is the ``stability operator''
\[
    \ms{L}_N(u,z)[T] = T - M'_N(u,z)\mc{S}'_N[T]M'_N(u,z),
\]
which is invertible for every $u$ and every $z$ by \cite[Lemma 3.7(i)]{AltErdKruNem2019}. Inserting the estimates \eqref{eqn:mde_trivialbd}, \eqref{eqn:mde_bddoperator}, and \eqref{eqn:genericmde_dsmall} into \eqref{eqn:generic_first_stability_bd} yields
\begin{equation}
\label{eqn:genericmde_quadraticineq}
    \|M_N(u,z) - M'_N(u,z)\| \leq \frac{\kappa}{\eta} \|\ms{L}^{-1}_N(u,z)\| \left( \frac{1+\eta R}{N\eta^2} + \|M_N(u,z) - M'_N(u,z)\|^2 \right).
\end{equation}
As usual, this quadratic inequality is fundamental to our strategy: We use it to show that $\|M_N-M'_N\|$ is small for very large $\eta$, then fix $E$ and decrease $\eta$ with a continuity argument. To make this bound useful, we import the following estimate on $\|\ms{L}^{-1}(u,z)\|$ from \cite[(3.23), (3.22), Convention 3.5]{AltErdKruNem2019} combined with \eqref{eqn:mde_trivialbd}: There exists a constant $C$ such that, for all $u$ and $z$, we have
\begin{equation}
\label{eqn:generic_mde_versatileLestimate}
    \|\ms{L}^{-1}(u,z)\| \leq C\left(1+\frac{1}{\eta^2} + \frac{\|M'_N(u,z)^{-1}\|^9}{\eta^{13}}\right).
\end{equation}
We use this estimate differently for $\eta \geq 1$ and $\eta \leq 1$, which are the two steps in the remainder of our argument.

\bigskip

\noindent \textbf{Step 1 ($\eta \geq 1$):} If $u \in B_R$ for some $R$ (we take $R \geq 1$ without loss of generality), then taking norms directly in the MDE \eqref{eqn:generic_mde} and applying \eqref{eqn:mde_trivialbd} and \eqref{eqn:mde_bddmean} yields
\[
    \|M'_N(u,z)^{-1}\| \leq \abs{z} + \|a_N(u)\| + \|M'_N(u,z)\| \leq \abs{z} + \kappa R + 1.
\]
If $\eta \geq 1$, then $\abs{z} \leq \eta \sqrt{1+E^2}$, so for any choice of $E_{\max{}}$ there exists a constant $C_{R,E_{\max{}}} = C_{R,E_{\max{}},\kappa_1}$ such that
\[
    \sup_{\abs{E} \leq E_{\max{}}, \, \eta \geq 1, \, u \in B_R} \frac{\|M'_N(u,z)^{-1}\|}{\eta} \leq C_{R,E_{\max{}}}. 
\]
Inserting this into \eqref{eqn:generic_mde_versatileLestimate} gives, for a new constant $\widetilde{C}_{R,E_{\max{}}} = \widetilde{C}_{R,E_{\max{}},\kappa_1}$,
\begin{equation}
\label{eqn:genericmde_crudeLestimate}
    \sup_{\abs{E} \leq E_{\max{}}, \, \eta \geq 1, \, u \in B_R} \|\ms{L}^{-1}(u,z)\| \leq \widetilde{C}_{R,E_{\max{}}}.
\end{equation}

Now fix $\abs{E} \leq E_{\max{}}$, and consider the functions $f_N : (0,\infty) \to \R$ and $g^{\pm}_N : [1,\infty) \to \R$ defined by
\begin{align*}
    f_N(\eta) &= f_{N,u}(\eta) = \|M_N(u,E+\ii \eta) - M'_N(u,E+\ii \eta)\|, \\ 
    g^{\pm}_N(\eta) &= \frac{\eta}{2\kappa\widetilde{C}_{R,E_{\max{}}}} \left( 1\pm\sqrt{1-\frac{4\kappa^2(\widetilde{C}_{R,E_{\max{}}})^2(1+\eta R)}{N\eta^4}} \right).
\end{align*}
(The functions $g^{\pm}_N(\eta)$ are well-defined if $N \geq 4(\widetilde{C}_{R,E_{\max{}}})^2(1+R)$.) The quadratic inequality \eqref{eqn:genericmde_quadraticineq} with the estimate \eqref{eqn:genericmde_crudeLestimate} inserted give, for all $N \geq 4(\widetilde{C}_{R,E_{\max{}}})^2$ and all $\eta \geq 1$, 
\[
    f_N(\eta) \in [0,g_N^{-}(\eta)] \cup [g_N^{+}(\eta),\infty).
\]
If $\eta \geq  \max\left\{1,\sqrt{4\kappa\widetilde{C}_{R,E_{\max{}}}}\right\}$, then the crude bound \eqref{eqn:mde_trivialbd} yields
\[
    f_N(\eta) \leq \frac{\eta}{2\widetilde{C}_{R,E_{\max{}}}} < g_N^{+}(\eta),
\]
so that $f_N(\eta) \leq g_N^{-}(\eta)$. But since $M_N(u,z)$ and $M'_N(u,z)$ are both holomorphic matrix-valued functions of $z$ \cite{HelFarSpe2007}, we know that $f_N(\eta)$ is a continuous function of $\eta$. Since $g_N^{-}(\eta) < g_N^{+}(\eta)$ for all $\eta > 1$, we have $f_N(\eta) \leq g_N^{-}(\eta)$ down to $\eta = 1$. Notice that this is uniform in $u \in B_R$.

\bigskip

\noindent \textbf{Step 2 ($\eta \leq 1$):} Now we estimate
\[
    \|M'_N(u,z)^{-1}\| \leq \abs{z}+\kappa R + \frac{1}{\eta} \leq \frac{C'_{R,E_{\max{}}}}{\eta}
\]
for some $C'_{R,E_{\max{}}} = C'_{R,E_{\max{}},\kappa_1}$. Inserting this and \eqref{eqn:mde_trivialbd} shows that, for some $C''_{R,E_{\max{}}} = C''_{R,E_{\max{}},\kappa_1}$, we have
\begin{equation}
\label{eqn:genericmde_fineLestimate}
    \sup_{\abs{E} \leq E_{\max{}}, \, \eta \leq 1, \, u \in B_R} \frac{\|\ms{L}^{-1}(u,z)\|}{\eta^{-22}} \leq C''_{R,E_{\max{}}}.
\end{equation}
Now fix $\abs{E} \leq E_{\max{}}$ and consider the functions $h_N^{\pm} : [N^{-1/50},1] \to \R$ defined by
\[
    h_N^{\pm}(\eta) = \frac{\eta^{23}}{2\kappa C''_{R,E_{\max{}}}} \left(1\pm \sqrt{1-\frac{4\kappa^2(C''_{R,E_{\max{}}})^2(1+\eta R)}{N\eta^{48}}} \right).
\]
As above, the quadratic inequality \eqref{eqn:genericmde_quadraticineq} with the estimate \eqref{eqn:genericmde_fineLestimate} inserted give, for all $N$ and all $\eta \leq 1$, 
\[
    f_N(\eta) \in [0,h_N^{-}(\eta)] \cup [h_N^{+}(\eta),\infty).
\]
But when $\eta = 1$ and $N \geq 4\kappa^2C''_{R,E_{\max{}}}\widetilde{C}_{R,E_{\max{}}}(1+R)$ we have (using $1-\sqrt{1-x} \leq x$)
\[
    f_N(1) \leq g_N^{-}(1) \leq \frac{2\kappa\widetilde{C}_{R,E_{\max{}}}(1+R)}{N} \leq \frac{1}{2\kappa C''_{R,E_{\max{}}}} < h_N^{+}(1),
\]
so the same continuity argument as above gives
\begin{equation}
\label{eqn:genericmde_resultofcty}
    f_N(\eta) \leq h_N^{-}(\eta) \leq \frac{2\kappa C''_{R,E_{\max{}}}(1+R)}{N\eta^{25}}.
\end{equation}
Again, this is uniform over $u \in B_R$ and $\abs{E} \leq E_{\max{}}$.

\bigskip

To show the statement of the lemma, given $R$, $0 < \gamma < \frac{1}{50}$, and $A$, we choose $E_{\max{}} = A$ above; then applying \eqref{eqn:genericmde_resultofcty} yields
\[
    \sup_{u \in B_R} \frac{1}{N} \int_{-A}^A \abs{\Tr(M_N(u,E+\ii N^{-\gamma})) - \Tr(M'_N(u,E+\ii N^{-\gamma}))} \diff E \leq (4A\kappa C''_{R,A}(1+R))N^{25\gamma-1}.
\]
This holds for $N$ large enough depending on $\kappa, R$, and $A$.
\end{proof}


\section{Elastic manifold}


\subsection{Establishing the variational formula.}\ In this subsection we establish a variational formula for complexity, given over $\R^{L^d}$. In the next subsection we analyze this variational problem, first by reducing it to a variational problem over $\R$ and then by relating it to the variational problem we analyze in depth for the soft-spins model.

In this subsection, we frequently reference notation and results in the companion paper \cite{BenBouMcK2021I}. Let
\[
	J = 2\sqrt{B''(0)}
\]
which will be an important scaling factor. For each $u \in \R^{L^d}$, define
\begin{equation}
\label{eqn:FLDdefau}
    a(u) = (-t_0\Delta + \diag(u) + \mu_0\Id_{L^d \times L^d}) \in \R^{L^d \times L^d}, 
\end{equation}
and for each $z \in \mathbb{H}$ let $m_\infty(u,z) = (m_\infty(u,z)_1, \ldots, m_\infty(u,z)_{L^d}) \in \C^{L^d}$ be the unique solution to
\begin{equation}
\label{eqn:kroneckermde_diagonal}
    m_\infty(u,z) = \diag[(a(u)-J^2m_\infty(u,z)-z\Id)^{-1}] \quad \text{such that} \quad \im m_\infty(u,z) > 0 \text{ componentwise}.
\end{equation}
(Recall we identify vectors with diagonal matrices; we will prove existence and uniqueness during the proof using the methods of Erd\H{o}s and co-authors.) Let $\mu_\infty(u)$ (which also depends on $L$, $d$, $t_0$, and $\mu_0$) be the measure whose Stieltjes transform is given by
\[
    \int \frac{\mu_\infty(u,\diff s)}{s-z} = \frac{1}{L^d} \sum_{i=1}^{L^d} m_\infty(u,z)_i.
\]

\begin{thm}
\label{thm:FLDcomplexity}
The probability measure $\mu_\infty(u)$ admits a bounded, compactly supported density $\mu_\infty(u,\cdot)$ with respect to Lebesgue measure, and
\begin{equation}
\label{eqn:FLD_tot}
    \Sigma(\mu_0) = \Sigma(\mu_0, L, d, t_0) := \lim_{N \to \infty} \frac{1}{NL^d}\log \E[\mc{N}_{\text{tot}}]
    = -\frac{1}{L^d} \log( \det(\mu_0 \Id - t_0\Delta)) + \sup_{u \in \R^{L^d}} \left\{\int \log\abs{\cdot} \rd\mu_\infty(u,\cdot) - \frac{\|u\|^2}{2J^2L^d} \right\}.
\end{equation}
Furthermore, if we define the set 
\begin{equation}
\label{eqn:FLD_G}
    \mc{G} = \{u \in \R^{L^d} : \mu_\infty(u)((-\infty,0)) = 0\}
\end{equation}
of $u$ values whose corresponding measures $\mu_\infty(u)$ are supported in the right half-line, we have
\begin{equation}
\label{eqn:FLD_min}
    \Sigma_{\text{st}}(\mu_0) = \Sigma(\mu, L, d, t_0) := \limsup_{N \to \infty} \frac{1}{NL^d} \log \E[\mc{N}_{\text{st}}] 
    = -\frac{1}{L^d} \log(\det(\mu_0 \Id - t_0\Delta)) + \sup_{u \in \mc{G}} \left\{\int \log\abs{\cdot}\rd \mu_\infty(u,\cdot) - \frac{\|u\|^2}{2J^2L^d} \right\}.
\end{equation}
The suprema in \eqref{eqn:FLD_tot} and \eqref{eqn:FLD_min} are achieved (possibly not uniquely).
\end{thm}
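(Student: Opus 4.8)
The plan is to obtain \eqref{eqn:FLD_tot} and \eqref{eqn:FLD_min} from the Kac--Rice formula, fed into the determinant and Laplace-type asymptotics of the companion paper \cite{BenBouMcK2021I}, whose hypotheses must be verified for this model; the two model-specific ingredients are the identification of the candidate $\mu_\infty(u)$ through the Matrix Dyson Equation \eqref{eqn:kroneckermde_diagonal} and the stability estimate of Section \ref{sec:stability}.

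\emph{Reduction to a random determinant.} First I would apply Kac--Rice on $\R^{NL^d}$, handling non-compactness by truncating to $\{\|\mathbf u\|\le R\sqrt N\}$, using $\mu_0>0$ together with the subgaussian tails of $V_N$ and its first two derivatives to discard the contribution of $\|\mathbf u\|\gtrsim R\sqrt N$, and letting $R\to\infty$ at the end. The gradient $\nabla\mc H[\mathbf u]$ is Gaussian with $\mathbf u$-independent covariance proportional to $|B'(0)|\Id_{NL^d}$, so $\int\phi_{\nabla\mc H[\mathbf u]}(0)\,\diff\mathbf u$ is an explicit Gaussian integral producing the term $-\tfrac1{L^d}\log\det(\mu_0\Id-t_0\Delta)$. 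Since each $V_N(\cdot,x)$ is isotropic, odd and even derivatives at a point are uncorrelated, so the conditioned Hessian has the same law as the unconditioned one; as distinct sites carry independent fields and the one-point law of $\nabla^2_y V_N$ does not depend on the base point, this law is independent of $\mathbf u$, realizing the hypotheses of Metatheorem \ref{thm:metathm}. A short covariance computation identifies the disorder Hessian blocks as i.i.d.\ GOE of variance $4B''(0)=J^2$ plus i.i.d.\ centered Gaussian multiples of the identity of variance $O(1/N)$, all independent. Integrating out this $L^d$-dimensional Gaussian vector of ``field-value directions'' puts $\E[\mc N_{\textup{tot}}]$ in the shape \eqref{eqn:overallproblem} with $m=L^d$, the Kronecker-type matrices $H_N(u)$ being exactly those whose deterministic MDE is \eqref{eqn:kroneckermde_diagonal} (the Gaussian weight comes from the density of those directions; tracking the scaling constants between $\nabla^2\mc H$ and the abstract framework is routine but fiddly).

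\emph{Identifying $\mu_\infty(u)$ and invoking the companion paper.} The fixed-point problem \eqref{eqn:kroneckermde_diagonal} is well-posed by the Helton--Far--Speicher/Erd\H{o}s-type argument, and crucially its solution $m_\infty(u,z)$ does not depend on $N$, since $a(u)$ is a fixed $L^d\times L^d$ matrix; hence the running hypothesis ``$\mu_N(u)\to\mu_\infty(u)$'' of \cite{BenBouMcK2021I} holds with $\mu_N(u)\equiv\mu_\infty(u)$ once one knows $\hat\mu_{H_N(u)}$ is asymptotically $\mu_\infty(u)$. For the latter I would combine a concentration estimate for $\hat\mu_{H_N(u)}$ (from \cite{BenBouMcK2021I}) with Lemma \ref{lem:generic_distance_stability}, applied with $a'_N(u)=a(u)\otimes\Id_N$, $\mc S'_N$ the idealized block self-energy $R\mapsto\bigoplus_x J^2\langle R|_{\text{block }x}\rangle\Id_N$, and $a_N,\mc S_N$ the $O(1/N)$-perturbations that genuinely describe $H_N(u)$; one checks \eqref{eqn:mde_bddmean}--\eqref{eqn:mde_closemeans} from the explicit structure. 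Standard MDE regularity then gives that $\mu_\infty(u)$ has a bounded, compactly supported density, with continuity and the required uniformity in $u$ (stability of the MDE in the shift, plus $\|a(u)\|\lesssim1+\|u\|$); together with the absence of outliers in $H_N(u)$ (its extreme eigenvalues being controlled by $\lambda_{\min/\max}(a(u))\mp2J$), all hypotheses of \cite{BenBouMcK2021I} are met, so \eqref{eqn:informal_detcon} and the Laplace method yield \eqref{eqn:FLD_tot}. For minima one inserts $\mathbf 1\{\nabla^2\mc H\succeq0\}$ and uses \eqref{eqn:overviewminima} (no outliers, edge rigidity of $\lambda_{\min}(H_N(u))$ when $u\in\mc G$, and a large-deviations bound on the number of negative eigenvalues when $u\notin\mc G$) to restrict the supremum to $\mc G$, giving \eqref{eqn:FLD_min}.

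\emph{Attainment, and the main obstacle.} The objective $u\mapsto\int\log|\lambda|\,\mu_\infty(u,\diff\lambda)-\tfrac{\|u\|^2}{2J^2L^d}$ is continuous (stability of the MDE) and coercive (the support of $\mu_\infty(u)$ lies within $O(\|u\|)$ of the origin, so the first term grows at most logarithmically while $-\|u\|^2$ dominates), hence attains its supremum on a compact set; and $\mc G=\{u:\mathtt l(\mu_\infty(u))\ge0\}$ is closed by continuity of $u\mapsto\mathtt l(\mu_\infty(u))$ and nonempty (taking all $u_x$ large forces $\lambda_{\min}(a(u))>2J$, whence $\mathtt l(\mu_\infty(u))\ge\lambda_{\min}(a(u))-2J>0$), so the restricted supremum is attained as well. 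I expect the genuine difficulty to lie in two places: controlling the non-compact Kac--Rice integral uniformly enough in $N$ to pass to the limit, and verifying the precise regularity/no-outlier inputs in the variable $u$ that the Laplace method of \cite{BenBouMcK2021I} requires (continuity and uniformity of $\mu_\infty(u)$, of its density, and of its edges), for which the stability theory of Section \ref{sec:stability} and MDE regularity are the essential tools.
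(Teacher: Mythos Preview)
Your proposal is correct and follows essentially the same route as the paper: Kac--Rice reduces $\E[\mc N_{\textup{tot}}]$ and $\E[\mc N_{\textup{st}}]$ to integrals of $\E[|\det H_N(u)|]$ (the paper simply cites \cite{FyoLeD2020} for this exact finite-$N$ identity rather than redoing it), then the MDE identification \eqref{eqn:kroneckermde_diagonal}, the stability Lemma \ref{lem:generic_distance_stability}, and the determinant/Laplace theorems of \cite{BenBouMcK2021I} yield the variational formulas. Two small points where the paper is more careful than your sketch: the MDE measure for $H_N(u)$ is not literally $\mu_\infty(u)$ (the GOE diagonal carries an $(N{+}1)/N$ variance factor, so $\mc S_N\neq\mc S_\infty$ and Lemma \ref{lem:generic_distance_stability} is genuinely used to pass from $\mu_N$ to $\mu_\infty$), and for minima the companion paper's Laplace theorem requires not just that $\mc G$ be closed and nonempty but that the approximating sets $\mc G_{+\epsilon}$ be convex, have positive measure, and satisfy $\overline{\cup_{\epsilon>0}\mc G_{+\epsilon}}=\mc G$, which the paper checks via the affine dependence $H_N(u+\delta\vec 1)=H_N(u)+\delta\Id$ and convergence of extreme eigenvalues.
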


We first build the relevant block matrix. With $a(u)$ as in \eqref{eqn:FLDdefau}, let
\[
    A_N(u) = a(u) \otimes \Id_{N \times N}.
\]
For each $N$, let $(X_i)_{i=1}^{L^d}$ be a collection of $L^d$ independent $N \times N$ matrices, each distributed as $J$ times a GOE matrix, with the normalization $\E[((X_i)_{jk})^2] = J^2 \frac{1+\delta_{jk}}{N}$. Let
\begin{align*}
    W_N &= \sum_{i=1}^{L^d} E_{ii} \otimes X_i, \\
    H_N(u) &= A_N(u) + W_N.
\end{align*}
This matrix is in the class of ``block-diagonal Gaussian matrices'' studied in \cite[Corollary 1.9]{BenBouMcK2021I}.  It appears naturally in the Kac-Rice formula, but we also introduce a slight modification that is easier to work with. Let
\begin{align*}
    \widetilde{W_N} &= \left(\mathbf{1} - \frac{1}{\sqrt{2}}\Id\right) \odot W_N, \\
    \widetilde{H_N(u)} &= A_N(u) + \widetilde{W_N},
\end{align*}
where $\mathbf{1}$ is the matrix of all ones and $\odot$ is the entrywise product, i.e., $\widetilde{W_N}$ is just $W_N$ rescaled to make the variances $J^2/N$ both on and off the diagonal, coupled appropriately with $W_N$.

Now we simplify the MDE. It is known \cite{HelFarSpe2007} that, whenever $\mc{S} : \C^{L^d \times L^d} \to \C^{L^d \times L^d}$ is a linear operator that is self-adjoint with respect to the inner product $\ip{R,T} = \Tr(R^\ast T)$ and that preserves the cone of positive-semi-definite matrices, the problem
\begin{equation}
\label{eqn:genericmde}
    -M^{-1}(u,z) = z\Id - a(u) + \mc{S}[M(u,z)] \quad \text{subject to} \quad \im M(u,z) > 0
\end{equation}
has a unique solution $M(u,z) \in \C^{L^d \times L^d}$ for each $z \in \mathbb{H}$ and $u \in \R^{L^d}$. We will consider this problem with two choices of operator $\mc{S}$:
\begin{align*}
    \mc{S}_N[T] = J^2 \frac{N+1}{N} \diag(T) \quad \text{induces} \quad M_N(u,z), \\
    \mc{S}_\infty[T] = J^2 \diag(T) \quad \text{induces} \quad M_\infty(u,z).
\end{align*}
Write $\ms{S}_i$ (respectively, $\widetilde{\ms{S}_i}$) for the ``stability operators'' of \cite[(1.15)]{BenBouMcK2021I} corresponding to the matrix $H_N(u)$ (respectively, $\widetilde{H_N(u)}$). Then we can compute
\[
    \ms{S}_i[\mathbf{r}] = J^2 \sum_{k=1}^N \frac{1+\delta_{ik}}{N}\diag(r_k), \qquad \widetilde{\ms{S}_i[\mathbf{r}]} = \frac{J^2}{N} \sum_{k=1}^N \diag(r_k).
\]
Thus the choices $\mathbf{m}(u,z) = (M_N(u,z), \ldots, M_N(u,z))$ and $\widetilde{\mathbf{m}}(u,z) = (M_\infty(u,z), \ldots, M_\infty(u,z))$ exhibit solutions to the block MDE \cite[(1.16)]{BenBouMcK2021I} for the matrices $H_N(u)$ and $\widetilde{H_N(u)}$, respectively. That is, the measure $\mu_N(u)$ that appears in the local laws for $H_N(u)$ has Stieltjes transform
\[
    \int \frac{\mu_N(u,\diff s)}{s - z} = \frac{1}{L^d} \Tr(M_N(u,z)),
\]
and the measure that appears in the local laws for $\widetilde{H_N(u)}$ is actually independent of $N$: We call it $\mu_\infty(u)$, and its Stieltjes transform is given by
\[
    \int \frac{\mu_\infty(u,\diff s)}{s - z} = \frac{1}{L^d} \Tr(M_\infty(u,z)).
\]

\begin{lem}
\label{lem:kroneckermde_Ldbound}
Both $\mu_N(u)$ and $\mu_\infty(u)$ admit densities $\mu_N(u,\cdot)$ and $\mu_\infty(u,\cdot)$ with respect to Lebesgue measure, and
\[
    \sup_{u \in \R^m, z \in \mathbb{H}, N \in \N} \max \left\{ \|M_N(u,z)\|, \|M_\infty(u,z)\|, \|\mu_N(u,\cdot)\|_{L^\infty}, \|\mu_\infty(u,\cdot)\|_{L^\infty} \right\} \leq \sqrt{L^d}/J.
\]
\end{lem}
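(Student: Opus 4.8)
The plan is to run the standard Helton--Far--Speicher / Erd\H{o}s-coauthors imaginary-part and stability argument on the two scalar-type MDEs at once, derive the diagonal bound $\abs{m_i}\le 1/J$ (which immediately yields absolute continuity and the $L^\infty$ density bounds), and then dispatch the operator-norm bound, which is the only delicate point, separately. Existence and uniqueness of $M_N(u,z)$ and $M_\infty(u,z)$ — hence of the vectors $m_N,m_\infty$, which are their diagonals — is \cite{HelFarSpe2007} applied to $\mc S_N[T]=J^2\tfrac{N+1}{N}\diag(T)$ and $\mc S_\infty[T]=J^2\diag(T)$, both self-adjoint for $\ip{R,T}=\Tr(R^\ast T)$ and positivity-preserving, with $a(u)$ symmetric; this also supplies the a priori bound $\|M\|\le 1/\eta$ and holomorphy of $z\mapsto M(u,z)$. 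Since $\mc S$ commutes with transposition and $a(u)$ is symmetric, uniqueness forces $M(u,z)^T=M(u,z)$, which I will use.

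Taking imaginary parts in $-M^{-1}=z\Id-a(u)+\mc S[M]$ gives $\im M=M^\ast(\eta\Id+\mc S[\im M])M$. With $q_i:=(\im M)_{ii}=\im m_i>0$ and $\mc S[\im M]=cJ^2\diag(q)$, where $c=\tfrac{N+1}{N}\ge 1$ (resp.\ $c=1$), the $(i,i)$ entry reads
\[
  q_i=\eta(M^\ast M)_{ii}+cJ^2\sum_k q_k\abs{M_{ki}}^2\;\ge\;cJ^2 q_i\abs{m_i}^2,
\]
so $\abs{m_i(u,z)}\le 1/(\sqrt c\,J)\le 1/J$ for all $i,u,z,N$. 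Consequently $\tfrac1\pi\im\!\big(\tfrac1{L^d}\Tr M(u,E+\ii\eta)\big)=\tfrac1{\pi L^d}\sum_i q_i\le\tfrac1{\pi J}$ uniformly in $E$ and $\eta$, and the standard Stieltjes-inversion argument gives absolute continuity of $\mu_N(u)$ and $\mu_\infty(u)$ with densities bounded by $\tfrac1{\pi J}\le\tfrac{\sqrt{L^d}}{J}$.

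The hard part is the bound $\|M\|\le\sqrt{L^d}/J$: this is genuinely an operator-norm statement — the Hilbert--Schmidt norm of $M$ can exceed $\sqrt{L^d}/J$, and the bound is approached as $t_0\to\infty$ — so $\abs{m_i}\le1/J$ alone is not enough. Using $M^T=M$ I would start from $\|M\|=\max_{\norm v=1}\abs{v^T M v}$ and, with $\psi:=Mv$, rewrite $v^T M v=\psi^T M^{-1}\psi=\ip{\psi,a(u)\psi}-z\,\psi^T\psi-J^2\sum_k m_k\psi_k^2$, then split on whether $E\in\supp\mu_\infty(u)$. Off the support, $m$ is real and $M$ is real symmetric, so $\|M\|^{-1}=\mathrm{dist}\big(E,\Spec(a(u)-J^2\diag(\re m))\big)$; expanding $M$ around its single near-extremal eigendirection $\psi$ by a Schur complement, bounding $J^2\sum_k m_k\psi_k^2$ by $J$ via $\abs{m_k}\le1/J$, and feeding in the self-consistent relation expressing $m_k$ in terms of $\psi_k^2$ should give $\mathrm{dist}\big(E,\Spec(a(u)-J^2\diag(\re m))\big)\ge J/\sqrt{L^d}$; the remaining $E$ would follow by continuity in $z$ together with the sharper identity $q_i=\sum_k(\eta+cJ^2q_k)\abs{M_{ki}}^2$. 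I expect this gap-case distance estimate to be the main obstacle; the cleanest route in practice is probably to quote the a priori norm bound for block-MDE solutions established for block-diagonal Gaussian matrices in \cite{BenBouMcK2021I}, applied to $H_N(u)$ and $\widetilde{H_N(u)}$, which specializes to exactly $\sqrt{L^d}/J$. The factor $\sqrt{L^d}$ is the imprint of a single active mode: if only one of the $L^d$ eigenmodes of $a(u)\otimes\Id_N$ is present at a given energy, the averaged diagonal $m\approx\tfrac1{L^d}(\text{that mode})$ inflates the effective variance that mode feels from $J^2$ to $L^dJ^2$, forcing its modulus to be $\sqrt{L^d}/J$ at the corresponding band center.
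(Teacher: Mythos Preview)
Your diagonal bound $|m_i|\le 1/J$ and the resulting density bound are fine (in fact sharper than the paper's $\sqrt{L^d}/(\pi J)$). But the operator-norm part is not proved: the Schur-complement sketch and the appeal to \cite{BenBouMcK2021I} are both vague, and your assertion that ``the Hilbert--Schmidt norm of $M$ can exceed $\sqrt{L^d}/J$'' is actually false and is what sent you in the wrong direction.

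You already wrote down the key identity
\[
q_i=\sum_k(\eta+cJ^2q_k)\,\abs{M_{ki}}^2,
\]
but you only used the single term $k=i$. The paper recognizes this as the vector equation $q=F(\eta\mathbf 1+cJ^2q)$, where $F\in\R^{L^d\times L^d}$ has entries $F_{ij}=\abs{M_{ij}}^2$. Since $M^T=M$, the matrix $F$ is real symmetric with nonnegative entries; taking the inner product with its Perron--Frobenius eigenvector (which has nonnegative entries, so pairs positively with $q>0$) gives $\|F\|\le 1/(cJ^2)$. Now the operator-norm bound is immediate from the Hilbert--Schmidt bound:
\[
\|M\|^2\le\Tr(M^\ast M)=\sum_{i,j}\abs{M_{ij}}^2=\ip{\mathbf 1,F\mathbf 1}\le\|F\|\,L^d\le L^d/J^2.
\]
So the ``hard part'' evaporates once you use the full matrix $F$ rather than only its diagonal; there is no need for any gap/bulk case split or spectral perturbation argument.
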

\begin{proof}
The following arguments are due to L\'{a}szl\'{o} Erd\H{o}s and Torben Kr\"{u}ger.
We prove the result for $M_N$ and $\mu_N$; the proofs for $M_\infty$ and $\mu_\infty$ are similar. By taking the imaginary part of \eqref{eqn:genericmde} and multiplying on the left by $M_N(u,z)^\ast$ and on the right by $M_N(u,z)$, then taking the diagonal of both sides, we obtain
\begin{align}
\label{eqn:im_mde}
\begin{split}
    \im \diag(M_N(u,z)) &= \diag\left(M_N(u,z)^\ast \left(\eta + J^2 \frac{N+1}{N}\im(\diag(M_N(u,z)))\right) M_N(u,z) \right) \\
    &= F_N(u,z)\left( \eta+ J^2 \frac{N+1}{N}\im(\diag(M_N(u,z))) \right),
\end{split}
\end{align}
where $F_N(u,z) \in \R^{L^d \times L^d}$ is given elementwise by $F_N(u,z)_{ij} = \abs{(M_N(u,z))_{ij}}^2$. By transposing the MDE \eqref{eqn:genericmde} and using the fact that $a(u)$ is symmetric, we see that $M_N(u,z)$ is symmetric (but not Hermitian) as well. Hence $F_N(u,z)$ is a real symmetric matrix with nonnegative entries. The inner product of \eqref{eqn:im_mde} with the Perron-Frobenius eigenvector of $F_N$ then gives $\|F_N\| \leq \frac{1}{J^2} \frac{N}{N+1}$, since $\im(\diag(M_N(u,z))$ has all positive components. Thus
\[
    \|M_N(u,z)\|^2 \leq \Tr(M_N(u,z)^\ast M_N(u,z)) = \ip{1, F_N(u,z) 1} \leq \frac{1}{J^2} \frac{N}{N+1}L^d.
\]
Now for any interval $[a,b]$ we have
\[
    \mu_N(u)([a,b]) + \frac{\mu_N(u)(\{a\}) + \mu_N(u)(\{b\})}{2} = \lim_{\eta \downarrow 0} \frac{1}{\pi} \int_a^b \im \frac{1}{L^d} \Tr(M_N(u,E+\ii \eta)) \diff E \leq (b-a)\frac{\sqrt{L^d}}{J\pi}.
\]
By standard continuity arguments we extend this to $\mu_N(u)(A) \leq \abs{A} \frac{\sqrt{L^d}}{J\pi}$ for any Borel set $A$; this implies that $\mu_N$ is absolutely continuous with respect to Lebesgue measure with a density that is pointwise bounded by $\frac{\sqrt{L^d}}{J\pi}$. 
\end{proof}

\begin{lem}
\label{lem:fld_reference_muinfty}
For every $R$, there exists $\epsilon > 0$ such that
\begin{equation}
\label{eqn:FLD_Wasserstein}
    \sup_{u \in B_R} {\rm W}_1(\E[\hat{\mu}_{H_N(u)}], \mu_\infty(u)) \leq N^{-\epsilon}.
\end{equation}
\end{lem}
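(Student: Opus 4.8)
The plan is to sandwich $\hat{\mu}_{H_N(u)}$ and $\mu_\infty(u)$ around the modified matrix $\widetilde{H_N(u)}$, which is tailor-made for this purpose: on the one hand $\widetilde{H_N(u)}$ differs from $H_N(u)$ only by a small diagonal perturbation, and on the other hand the computation preceding Lemma~\ref{lem:kroneckermde_Ldbound} shows that the measure appearing in the local law for $\widetilde{H_N(u)}$ is \emph{exactly} $\mu_\infty(u)$ (while for $H_N(u)$ it is the $N$-dependent $\mu_N(u)$). So by the triangle inequality it suffices to bound, uniformly over $u \in B_R$, the two quantities $\E[{\rm W}_1(\hat{\mu}_{H_N(u)},\hat{\mu}_{\widetilde{H_N(u)}})]$ and $\E[{\rm W}_1(\hat{\mu}_{\widetilde{H_N(u)}},\mu_\infty(u))]$ by some negative power of $N$.

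\textbf{First term (diagonal perturbation).} The difference $\widetilde{H_N(u)}-H_N(u)=\widetilde{W_N}-W_N$ is a random, $u$-independent \emph{diagonal} matrix, each of whose entries is a fixed scalar multiple of the corresponding diagonal entry of some $X_i$; hence $\E\|\widetilde{H_N(u)}-H_N(u)\|_{\mathrm{HS}}^2 = O_{L,d,J}(1)$. By the Hoffman--Wielandt inequality applied to the coupling that matches eigenvalues in increasing order, ${\rm W}_1(\hat{\mu}_{H_N(u)},\hat{\mu}_{\widetilde{H_N(u)}}) \le (NL^d)^{-1/2}\|\widetilde{H_N(u)}-H_N(u)\|_{\mathrm{HS}}$, so Cauchy--Schwarz gives $\E[{\rm W}_1(\hat{\mu}_{H_N(u)},\hat{\mu}_{\widetilde{H_N(u)}})] = O(N^{-1/2})$, with an implied constant independent of $u$.

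\textbf{Second term (local law for $\widetilde{H_N(u)}$).} Here I would invoke the (averaged) local law for block-diagonal Gaussian matrices of \cite[Corollary 1.9]{BenBouMcK2021I}, applied to $\widetilde{H_N(u)}$, whose deterministic part $A_N(u)=a(u)\otimes\Id$ has operator norm $\le C_R$ on $B_R$ and whose Gaussian part has precisely the constant variance profile generating the MDE solution $M_\infty(u,z)$: with overwhelming probability one gets, at all scales $\eta\ge N^{-1+\delta}$, an estimate of the form $\abs{m_{\hat{\mu}_{\widetilde{H_N(u)}}}(E+\ii\eta)-m_{\mu_\infty(u)}(E+\ii\eta)} \le N^{\delta}/(N\eta)$, together with the absence of eigenvalues of $\widetilde{H_N(u)}$ outside a fixed compact interval $I=I_R$. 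To promote this to a statement uniform over $u\in B_R$, I would run a standard net argument: $u\mapsto\widetilde{H_N(u)}$ is $1$-Lipschitz in operator norm; the Stieltjes transform of an empirical spectral measure at a fixed spectral parameter with $\im z=\eta$ is $\OO(\eta^{-2})$-Lipschitz in the matrix; and $u\mapsto m_{\mu_\infty(u)}(E+\ii\eta)$ is Lipschitz by MDE perturbation theory together with the a priori bounds of Lemma~\ref{lem:kroneckermde_Ldbound}. A polynomial-size net over $B_R$ plus the subexponential failure probability then yields the local law simultaneously for all $u\in B_R$. Since $\mu_\infty(u)$ is supported in $I$ with an $L^\infty$ density bounded uniformly in $u$ (Lemma~\ref{lem:kroneckermde_Ldbound}), the scale-$\eta$ bound at $\eta=N^{-1+\delta}$ upgrades in the usual way to a Kolmogorov-distance bound $\sup_x\abs{F_{\hat{\mu}_{\widetilde{H_N(u)}}}(x)-F_{\mu_\infty(u)}(x)}\le N^{-\epsilon'}$ on the good event, hence (both measures living in $I$) to ${\rm W}_1 \le \abs{I}\,N^{-\epsilon'}$ there; on the complementary event one uses the deterministic bound ${\rm W}_1 \le \OO(1+\tfrac{1}{NL^d}\|\widetilde{H_N(u)}\|_{\mathrm{HS}}^2)$ with $\E\|\widetilde{H_N(u)}\|_{\mathrm{HS}}^2=\OO(NL^d)$ and the exponentially small probability to see this contributes $\oo(N^{-\epsilon'})$. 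Taking expectations bounds the second term by $\OO(N^{-\epsilon'})$, and choosing $\epsilon<\min(1/2,\epsilon')$ finishes the proof.

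\textbf{Main obstacle.} The conceptual content is light; the work is in two routine-but-fiddly places. First, the companion paper's local law is naturally a per-$u$ statement, so one must carefully set up the Lipschitz estimates (in $u$) for both the random matrix and the deterministic MDE solution and run the net/union-bound argument to get uniformity over the compact ball $B_R$. Second, one must execute the standard passage from the mesoscopic Stieltjes-transform local law to an honest ${\rm W}_1$ (equivalently Kolmogorov) bound, which relies essentially on the uniform density bound of Lemma~\ref{lem:kroneckermde_Ldbound} and on the no-outliers input of \cite{BenBouMcK2021I} to keep all measures confined to a common compact interval. (One could instead route through $\mu_N(u)$, comparing $\hat{\mu}_{H_N(u)}$ to $\mu_N(u)$ by the local law and $\mu_N(u)$ to $\mu_\infty(u)$ by the pointwise MDE-stability estimate inside the proof of Lemma~\ref{lem:generic_distance_stability}; the $\widetilde{H_N(u)}$ route above avoids this second comparison entirely.)
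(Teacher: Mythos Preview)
Your proof is correct, but it takes a genuinely different route from the paper's. The paper goes through the $N$-dependent MDE measure $\mu_N(u)$: it invokes \cite[Corollary 1.9]{BenBouMcK2021I} directly for $H_N(u)$ (whose proof already yields $\sup_{u\in B_R}{\rm W}_1(\E[\hat\mu_{H_N(u)}],\mu_N(u))\le N^{-\epsilon}$, locally uniformly in $u$ because all the model parameters are), and then bridges $\mu_N(u)$ to $\mu_\infty(u)$ at the level of Stieltjes transforms via the deterministic MDE stability result of Lemma~\ref{lem:generic_distance_stability}, converting to Wasserstein by the argument of \cite[Proposition 3.1]{BenBouMcK2021I}. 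This is exactly the alternative you mention in your last paragraph.

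Your primary route instead goes through the \emph{random matrix} $\widetilde{H_N(u)}$: Hoffman--Wielandt handles the diagonal perturbation $H_N(u)-\widetilde{H_N(u)}$ (giving $O(N^{-1/2})$), and then you apply the local law to $\widetilde{H_N(u)}$, whose self-consistent measure is already $\mu_\infty(u)$, so no MDE perturbation step is needed. This is slightly more hands-on (you re-derive the Stieltjes-to-Kolmogorov-to-${\rm W}_1$ passage and worry about uniformity in $u$ via a net), whereas the paper simply cites the companion corollary as already locally uniform. Conversely, your route avoids invoking Lemma~\ref{lem:generic_distance_stability} entirely, which is a nice simplification. One small point: the lemma concerns ${\rm W}_1(\E[\hat\mu_{H_N(u)}],\mu_\infty(u))$, while you bound $\E[{\rm W}_1(\hat\mu_{H_N(u)},\mu_\infty(u))]$; you should note explicitly the one-line Jensen/Kantorovich-duality step ${\rm W}_1(\E\mu,\nu)\le\E[{\rm W}_1(\mu,\nu)]$ that links the two.
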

\begin{proof}
First, note that
\begin{equation}
\label{eqn:FLD_bddmean}
    \sup_N\|A_N(u)\| = \|a(u) \otimes \Id\| = \|a(u)\| \leq t\|\Delta\| + \|u\| + \mu < \infty.
\end{equation}
Along with Lemma \ref{lem:kroneckermde_Ldbound}, this verifies the assumptions of \cite[Corollary 1.9]{BenBouMcK2021I}, the proof of which shows that \eqref{eqn:FLD_Wasserstein} holds with $\mu_\infty(u)$ replaced by $\mu_N(u)$ (the result is locally uniform in $u$ since all the assumptions are). To compare $\mu_N(u)$ and $\mu_\infty(u)$, we use the result of Lemma \ref{lem:generic_distance_stability} (with the choices $P_N \equiv L^d$, $\mc{S}'_N = \mc{S}_\infty$ as above) and follow the proof of \cite[Proposition 3.1]{BenBouMcK2021I}.
\end{proof}

\begin{lem}
\label{lem:fld_detsublinear}
There exists $C > 0$ with
\[
    \E[\abs{\det(H_N(u))}] \leq (C\max(\|u\|,1))^N.
\]
\end{lem}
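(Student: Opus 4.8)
This lemma is the crude \emph{a priori} bound needed to truncate the $u$-integral arising in the Kac--Rice formula for $\mc{N}_{\textup{tot}}$; the plan is to reduce it entirely to the operator norm of $H_N(u)$ together with a standard moment estimate on the operator norm of a GOE matrix. Since $H_N(u) = A_N(u) + W_N$ is a real symmetric matrix of size $NL^d \times NL^d$, we have $\abs{\det(H_N(u))} = \prod_j \abs{\lambda_j(H_N(u))} \le \|H_N(u)\|^{NL^d} \le \pa{\|A_N(u)\| + \|W_N\|}^{NL^d}$. The deterministic part is immediate: $\|A_N(u)\| = \|a(u) \otimes \Id_{N\times N}\| = \|a(u)\| \le t_0\|\Delta\| + \|u\| + \mu_0 \le C_0\max(\|u\|,1)$, where $C_0$ depends only on $t_0,\mu_0,L,d$, because the periodic lattice Laplacian has a fixed finite norm. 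Writing $p = NL^d$, it therefore remains to bound $\E\qa{\pa{C_0\max(\|u\|,1) + \|W_N\|}^p}$, and by Minkowski's inequality in $L^p$ it suffices to prove that $\E[\|W_N\|^p]^{1/p}$ is bounded by a constant independent of $N$.

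For this, observe that $W_N = \sum_{i=1}^{L^d} E_{ii}\otimes X_i$ is block diagonal, so $\|W_N\| = \max_{1\le i\le L^d}\|X_i\|$, where each $X_i$ is $J$ times a GOE matrix. I would invoke the classical non-asymptotic tail bound for the operator norm of a Gaussian Wigner matrix: there are constants $C_1, c > 0$, independent of $N$, with $\P(\|X_i\| \ge C_1 + s) \le C_1 e^{-cNs^2}$ for all $s \ge 0$; this follows from Gaussian concentration of the $1$-Lipschitz map $A \mapsto \|A\|$ applied to the independent entries of $X_i$ (of variance $O(1/N)$), together with the uniform bound $\sup_N \E\|X_i\| < \infty$. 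A union bound gives $\P(\|W_N\| \ge C_1 + s) \le L^d C_1 e^{-cNs^2}$. Integrating this tail against $p\,s^{p-1}$ and using $\int_0^\infty s^{p-1}e^{-\alpha s^2}\diff s = \tfrac12\alpha^{-p/2}\Gamma(p/2)$ together with Stirling's formula, one finds $\E[\|W_N\|^p] \le C_2^{\,p}$ for a constant $C_2$ depending only on $J, L, d$; the key point is that $p = NL^d$ is linear in $N$, so the factor $e^{-cNs^2}$ dominates $s^{p-1}$ and, after taking the $\tfrac1p$-th root, the combinatorial and $\Gamma$-function factors contribute only an $O(1)$ constant. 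Minkowski then yields $\E\qa{\pa{C_0\max(\|u\|,1) + \|W_N\|}^p}^{1/p} \le C_0\max(\|u\|,1) + C_2 \le (C_0+C_2)\max(\|u\|,1)$, and raising to the power $p = NL^d$ gives the claim with $C = C_0 + C_2$ (recall $L$ and $d$ are fixed, so the exponent is the dimension of $H_N(u)$).

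The only genuine obstacle is the uniform-in-$N$ moment bound $\sup_N \E[\|W_N\|^p]^{1/p} < \infty$ at the large exponent $p = NL^d$: crude estimates such as $\|X_i\| \le \|X_i\|_{\mathrm{HS}}$ are useless, since the Hilbert--Schmidt norm is of order $\sqrt N$, so one must use that the GOE operator norm remains $O(1)$ with sub-Gaussian fluctuations at scale $N^{-1/2}$. This is entirely standard random-matrix input; an equivalent route avoiding concentration is the trace-moment bound $\E[\Tr X_i^{2k}] \le N(CJ)^{2k}$, valid for $k$ up to a fixed multiple of $N$, which leads to the same conclusion. Everything else is routine.
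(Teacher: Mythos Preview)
Your proof is correct and follows essentially the same route as the paper: bound $\abs{\det(H_N(u))}$ by $\|H_N(u)\|^{NL^d}$, control $\|A_N(u)\|$ deterministically, and use a tail estimate on the GOE operator norm to bound high moments of $\|W_N\|$. The paper uses the elementary inequality $(a+b)^N \leq (2a)^N + (2b)^N$ in place of your Minkowski step and cites a sub-exponential (rather than sub-Gaussian) tail bound on $\|W_N\|$ from the companion paper, but these are cosmetic differences; your version is in fact slightly more careful in keeping track of the matrix dimension $NL^d$.
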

\begin{proof}
Deterministically,
\[
    \abs{\det(H_N(u))} \leq \|H_N(u)\|^N \leq (\|W_N\| + \|A_N(u)\|)^N \leq (2\|W_N\|)^N + (2\|A_N(u)\|)^N.
\]
In the proof of \cite[Corollary 1.9]{BenBouMcK2021I}, we showed $\P(\|W_N\| \geq t) \leq e^{-cN\max(0,t-C)}$ for some constants $c, C$, which implies $\E[\|W_N\|^N] \leq e^{CN}$. With the estimate on $\|A_N(u)\|$ from \eqref{eqn:FLD_bddmean}, this suffices.
\end{proof}

\begin{lem}
\label{lem:fld_NlogN}
For every $R$ and every $\epsilon > 0$, we have
\[
    \lim_{N \to \infty} \frac{1}{N\log N} \log \left[ \sup_{u \in B_R} \P(d_{\textup{BL}}(\hat{\mu}_{H_N(u)}, \mu_\infty(u)) > \epsilon) \right] = -\infty.
\]
\end{lem}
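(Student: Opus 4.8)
The plan is to prove the far stronger estimate
\[
    \sup_{u \in B_R} \P\big(d_{\textup{BL}}(\hat{\mu}_{H_N(u)}, \mu_\infty(u)) > \epsilon\big) \leq C(R,\epsilon)\, e^{-c(R,\epsilon)N^2},
\]
from which the claim follows at once since $\frac{1}{N \log N}\log\big(C e^{-c N^2}\big) \to -\infty$. The two ingredients are Gaussian concentration, to bound fluctuations of the empirical measure around its mean at speed $N^2$, and Lemma~\ref{lem:fld_reference_muinfty}, to bound the mean against $\mu_\infty(u)$ uniformly over $u \in B_R$. Since $H_N(u) = A_N(u) + W_N$ with $A_N(u)$ deterministic and $W_N$ a centered Gaussian symmetric matrix whose free entries have variance $O(1/N)$, for each fixed Lipschitz $f : \R \to \R$ the map sending the underlying standard Gaussian vector to $\frac{1}{N}\Tr f(H_N(u))$ is Lipschitz with constant $O(\|f\|_{\text{Lip}}/N)$: the eigenvalue map is $1$-Lipschitz in Hilbert--Schmidt norm, the rescaling $W_N = N^{-1/2} G$ supplies a further factor $N^{-1/2}$, and the deterministic shift $A_N(u)$ only moves the base point. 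Hence the Gaussian concentration inequality yields, uniformly in $u$ and with no truncation of $W_N$,
\[
    \P\left(\left| \frac{1}{N}\Tr f(H_N(u)) - \E\Big[ \frac{1}{N}\Tr f(H_N(u)) \Big] \right| > t \right) \leq 2\exp\left( - \frac{c N^2 t^2}{\|f\|_{\text{Lip}}^2} \right).
\]
Together with $\big| \E\!\int f \diff\hat{\mu}_{H_N(u)} - \int f \diff\mu_\infty(u) \big| \leq \|f\|_{\text{Lip}}\, {\rm W}_1(\E[\hat{\mu}_{H_N(u)}], \mu_\infty(u)) \leq \|f\|_{\text{Lip}} N^{-\epsilon'}$ from Lemma~\ref{lem:fld_reference_muinfty}, this controls $|\int f \diff\hat{\mu}_{H_N(u)} - \int f\diff\mu_\infty(u)|$ for any one bounded Lipschitz $f$. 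It remains to reduce the supremum over the infinite class of test functions defining $d_{\textup{BL}}$ to an $N$-independent finite subfamily.

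\textbf{Reduction to finitely many test functions.} First I would record that, uniformly over $u \in B_R$, the measure $\mu_\infty(u)$ charges the complement of a fixed compact set $[-T_0, T_0]$, $T_0 = T_0(R)$, by at most $\epsilon/8$: this follows from $\|a(u)\| \leq t_0\|\Delta\| + \|u\| + \mu_0$ being bounded on $B_R$, the tail bound $\P(\|W_N\| \geq s) \leq e^{-cN\max(0,s-C)}$ recalled in the proof of Lemma~\ref{lem:fld_detsublinear}, and the convergence $\E[\hat{\mu}_{H_N(u)}] \to \mu_\infty(u)$ of Lemma~\ref{lem:fld_reference_muinfty} (one may also quote the edge bounds for $\mu_\infty(u)$ from the companion paper together with Lemma~\ref{lem:kroneckermde_Ldbound}). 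Now fix a smooth cutoff $\chi$ with $\chi \equiv 1$ on $[-T_0, T_0]$, $\supp\chi \subseteq [-2T_0, 2T_0]$, and $0 \leq \chi \leq 1$, and split a generic test function $f$ with $\|f\|_{\text{Lip}} + \|f\|_{L^\infty} \leq 1$ as $f = f\chi + f(1-\chi)$. The tail part obeys $|\int f(1-\chi)\diff\mu_\infty(u)| \leq \epsilon/8$ and $|\int f(1-\chi) \diff\hat{\mu}_{H_N(u)}| \leq \int \psi \diff\hat{\mu}_{H_N(u)}$ for the single fixed bounded Lipschitz function $\psi := 1 - \chi$, while the bulk part $f\chi$ ranges over a family of functions that are uniformly bounded, uniformly Lipschitz (constant $\leq 1 + \|\chi\|_{\text{Lip}}$) and supported in $[-2T_0, 2T_0]$, hence totally bounded in $\|\cdot\|_{L^\infty}$ by Arzel\`{a}--Ascoli; pick a finite $\frac{\epsilon}{16}$-net $h_1, \ldots, h_K$ of it, with $K = K(R,\epsilon)$ independent of $N$. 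Setting $h_0 := \psi$ and using $\int \psi \diff\mu_\infty(u) \leq \epsilon/8$, one obtains the inclusion
\[
    \{ d_{\textup{BL}}(\hat{\mu}_{H_N(u)}, \mu_\infty(u)) > \epsilon \} \subseteq \bigcup_{j=0}^K \left\{ \left| \int h_j \diff\hat{\mu}_{H_N(u)} - \int h_j \diff\mu_\infty(u) \right| > \frac{\epsilon}{8} \right\}.
\]

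\textbf{Conclusion and the main point of care.} Apply the first-paragraph bound to each $h_j$: for $N$ large (depending on $R, \epsilon$) the bias $\|h_j\|_{\text{Lip}} N^{-\epsilon'}$ is below $\epsilon/16$, so the $j$-th event forces $|\frac{1}{N}\Tr h_j(H_N(u)) - \E[\cdots]| > \epsilon/16$, of probability at most $2\exp(-c(R,\epsilon) N^2)$ uniformly in $u \in B_R$; a union bound over $j = 0, \ldots, K$ gives $\sup_{u \in B_R}\P(d_{\textup{BL}}(\hat{\mu}_{H_N(u)}, \mu_\infty(u)) > \epsilon) \leq 2(K+1)\exp(-c(R,\epsilon)N^2)$ for all large $N$, and dividing the logarithm by $N \log N$ produces the stated limit. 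I expect the only genuinely delicate points to be the uniform-in-$u$ localization of $\mu_\infty(u)$ used in the truncation --- where one leans on Lemma~\ref{lem:kroneckermde_Ldbound}, the $W_N$ edge tail, and the companion paper --- and the bookkeeping needed to confirm that the Gaussian Lipschitz constant and the net cardinality $K$ truly do not depend on $N$ or $u$. The analytic core, Gaussian concentration at speed $N^2$, is classical, and there is comfortable room between $N^2$ and $N\log N$, so no sharp constants are needed anywhere.
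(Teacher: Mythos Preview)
Your proposal is correct and rests on the same two pillars as the paper's proof: Gaussian concentration of the empirical measure at speed $N^2$, uniformly in $u$, together with Lemma~\ref{lem:fld_reference_muinfty} to replace $\E[\hat{\mu}_{H_N(u)}]$ by $\mu_\infty(u)$. The difference is purely in packaging. The paper observes that the entries of $\sqrt{N}H_N(u)$ are Gaussians (hence satisfy a log-Sobolev inequality with a constant independent of $u$ and $N$) and invokes \cite[Theorem~1.5]{GuiZei2000} directly, which already contains the net argument and outputs concentration for $d_{\textup{BL}}$ itself: $\P(d_{\textup{BL}}(\hat{\mu}_{H_N(u)}, \E[\hat{\mu}_{H_N(u)}]) > \epsilon) \leq C\epsilon^{-3/2}\exp(-cN^2\epsilon^5)$. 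You instead redo that theorem by hand: single-function Lipschitz concentration, truncation to a compact interval, and an Arzel\`a--Ascoli net over test functions. Your route is more self-contained and avoids quoting log-Sobolev, at the cost of the bookkeeping you flagged (uniform localization of $\mu_\infty(u)$ and the $N$-independence of the net); the paper's route is a two-line citation. Either way the analytic content is identical.
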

\begin{proof}
The laws of the entries of $\sqrt{N}H_N(u)$ satisfy the log-Sobolev inequality with a uniform constant, since they are Gaussians. (If they are degenerate, we recall that a delta mass satisfies log-Sobolev with any constant.) This is true uniformly over $u \in \R^{L^d}$, since $u$ only affects the mean, and translating measures preserves log-Sobolev with the same constant. Hence \cite[Theorem 1.5]{GuiZei2000} give, for some constants $C_1$ and $C_2$, 
\[
    \sup_{u \in \R^{L^d}} \P(d_{\textup{BL}}(\hat{\mu}_{H_N(u)}, \E[\hat{\mu}_{H_N(u)}]) > \epsilon) \leq \frac{C_1}{\epsilon^{3/2}} \exp\left(-\frac{C_2}{2L^d}N^2\epsilon^5\right).
\]
To relate $\E[\hat{\mu}_{H_N(u)}]$ to $\mu_\infty(u)$, we use Lemma \ref{lem:fld_reference_muinfty}. 
\end{proof}

\begin{lem}
\label{lem:FLD_nooutliers}
For every $\epsilon > 0$ and $R > 0$, we have
\begin{equation}
\label{eqn:FLD_nooutliers}
    \lim_{N \to \infty} \inf_{u \in B_R} \P(\Spec(H_N(u)) \subset [\mathtt{l}(\mu_\infty(u))-\epsilon, \mathtt{r}(\mu_\infty(u)) + \epsilon]) = 1.
\end{equation}
and in fact the extreme eigenvalues of $H_N(u)$ converge in probability to the endpoints of $\mu_\infty(u)$.
\end{lem}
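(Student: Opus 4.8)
The plan is to separate \eqref{eqn:FLD_nooutliers} (a ``no outliers'' bound) from the convergence of the extreme eigenvalues and to notice that the latter is an easy consequence of the former together with the bulk convergence already in hand. Lemmas \ref{lem:fld_reference_muinfty} and \ref{lem:fld_NlogN} give $\hat{\mu}_{H_N(u)} \to \mu_\infty(u)$ in probability, uniformly over $u \in B_R$; since $\mathtt{l}(\mu_\infty(u))$ and $\mathtt{r}(\mu_\infty(u))$ lie in the support of $\mu_\infty(u)$, this already forces $\lambda_{\min{}}(H_N(u)) \leq \mathtt{l}(\mu_\infty(u)) + \epsilon$ and $\lambda_{\max{}}(H_N(u)) \geq \mathtt{r}(\mu_\infty(u)) - \epsilon$ with probability tending to one for every $\epsilon > 0$, so combined with \eqref{eqn:FLD_nooutliers} it yields the claimed convergence in probability. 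Moreover the infimum over $u \in B_R$ in \eqref{eqn:FLD_nooutliers} costs nothing beyond a pointwise-in-$u$ estimate: since $H_N(u) - H_N(u') = \diag(u - u') \otimes \Id_N$ has operator norm $\|u - u'\|_\infty$, the maps $u \mapsto \lambda_{\min{}}(H_N(u))$ and $u \mapsto \lambda_{\max{}}(H_N(u))$ are $1$-Lipschitz, while $u \mapsto \mathtt{l}(\mu_\infty(u))$ and $u \mapsto \mathtt{r}(\mu_\infty(u))$ are continuous by the continuous dependence of the solution of \eqref{eqn:kroneckermde_diagonal} on $a(u)$, hence on $u$; one then covers $B_R$ by a \emph{finite}, $N$-independent family of small balls and takes a union bound. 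It thus remains to prove, for fixed $u$ and $\epsilon > 0$, that $\P(\lambda_{\max{}}(H_N(u)) > \mathtt{r}(\mu_\infty(u)) + \epsilon) \to 0$; the bound on $\lambda_{\min{}}$ is the same statement for $-H_N(u)$, whose limiting measure is the reflection of $\mu_\infty(u)$.

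For this pointwise bound the primary route is to import the no-outlier estimate for block-diagonal Gaussian matrices proved in the companion paper \cite{BenBouMcK2021I} alongside the Wasserstein bound used in Lemma \ref{lem:fld_reference_muinfty}: the local law there controls the resolvent $(H_N(u) - z)^{-1}$ at spectral scales $\eta \gg N^{-1}$, and $\|M_N(u,z)\|$ stays bounded once $\re z$ is at a fixed positive distance above $\mathtt{r}(\mu_N(u))$, so an eigenvalue in that region would contradict $\|(H_N(u) - z)^{-1}\| \geq \eta^{-1}$. Converting this into a statement about $\mathtt{r}(\mu_\infty(u))$ also requires $\mathtt{r}(\mu_N(u)) \to \mathtt{r}(\mu_\infty(u))$, which I would obtain by upgrading the stability estimate of Lemma \ref{lem:generic_distance_stability} (a comparison of the Stieltjes transforms of $\mu_N(u)$ and $\mu_\infty(u)$ at $\eta = N^{-\gamma}$) from the bulk to the edge, using that the operators $\mc{S}_N = J^2\tfrac{N+1}{N}\diag$ and $\mc{S}_\infty = J^2\diag$ differ by $O(1/N)$ and that the edge of the MDE density varies continuously with $\mc{S}$.

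If instead the required edge estimate has to be built from scratch, I would argue by hand. Viewing $\lambda_{\max{}}(H_N(u))$ as a function of $\sqrt{N}\,H_N(u)$, whose entries are Gaussians obeying a uniform log-Sobolev inequality (exactly as in the proof of Lemma \ref{lem:fld_NlogN}), the function is $N^{-1/2}$-Lipschitz, so $\lambda_{\max{}}(H_N(u))$ concentrates about its mean at scale $N^{-1/2}$ and it suffices to show $\limsup_{N \to \infty} \E[\lambda_{\max{}}(H_N(u))] \leq \mathtt{r}(\mu_\infty(u))$. Letting $c_u$ and $\rho_u$ be the midpoint and half-length of $[\mathtt{l}(\mu_\infty(u)), \mathtt{r}(\mu_\infty(u))]$ and using $\E[(\lambda_{\max{}}(H_N(u)) - c_u)_+] \leq \big(\E[\Tr((H_N(u) - c_u\Id)^{2k})]\big)^{1/(2k)}$, I would expand $\E[\Tr((H_N(u) - c_u\Id)^{2k})]$ over closed walks, using that $H_N(u) - c_u\Id$ has bounded deterministic part $A_N(u) - c_u\Id$ and centered Gaussian part $W_N$ with covariance supported on the diagonal blocks, and show that for $k = k_N \to \infty$ slowly the total is $e^{o(N)}\rho_u^{2k}$; taking $(2k)$-th roots then gives $\limsup \E[\lambda_{\max{}}(H_N(u))] \leq c_u + \rho_u = \mathtt{r}(\mu_\infty(u))$.

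The hard part is exactly this upper bound on $\lambda_{\max{}}$, i.e.\ \eqref{eqn:FLD_nooutliers} itself. Bulk convergence, Gaussian concentration, and the passage to a supremum over $B_R$ are routine given Lemmas \ref{lem:kroneckermde_Ldbound}--\ref{lem:fld_NlogN}, and the crude Weyl bound only places $\Spec(H_N(u))$ inside $[\lambda_{\min{}}(a(u)) - 2\sqrt{B''(0)}, \lambda_{\max{}}(a(u)) + 2\sqrt{B''(0)}]$, which strictly contains $[\mathtt{l}(\mu_\infty(u)), \mathtt{r}(\mu_\infty(u))]$ whenever $a(u)$ is not a multiple of the identity, because averaging over the independent GOE blocks pulls the spectral edge inward. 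Excluding the $o(N)$ many eigenvalues that could a priori sit in this gap is where the genuine work lies --- either in extracting edge (rather than merely weak) convergence of $\mu_N(u)$ to $\mu_\infty(u)$ from the MDE stability theory, or in the combinatorial control of the high moments of the structured matrix $H_N(u)$.
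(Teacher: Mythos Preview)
Your overall architecture is right: bulk convergence plus a no-outlier bound gives convergence of the extreme eigenvalues, and the latter is the only substantive step. Your route (a) would work, but it commits you to exactly the edge-stability issue you flag at the end --- proving $\mathtt{r}(\mu_N(u)) \to \mathtt{r}(\mu_\infty(u))$ from the MDE perturbation $\mc{S}_N - \mc{S}_\infty = O(1/N)$. The paper sidesteps this entirely with a trick you did not use.

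Rather than comparing the \emph{measures} $\mu_N(u)$ and $\mu_\infty(u)$ at the edge, the paper compares the \emph{matrices}. Recall the auxiliary matrix $\widetilde{H_N(u)} = A_N(u) + \widetilde{W_N}$, where $\widetilde{W_N}$ is $W_N$ with the diagonal rescaled so that all entry variances equal $J^2/N$. By construction the MDE associated to $\widetilde{H_N(u)}$ is exactly $\mu_\infty(u)$, with no $N$-dependence. So the local law of \cite[Theorem 2.4, Remark 2.5(v)]{AltErdKruNem2019} gives no outliers for $\widetilde{H_N(u)}$ directly with respect to $\mu_\infty(u)$, uniformly over $u \in B_R$ because the model parameters are uniform there. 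The remaining perturbation $\Delta_N = H_N(u) - \widetilde{H_N(u)} = W_N - \widetilde{W_N}$ is a diagonal matrix of independent Gaussians of variance $J^2/N$, independent of $u$, so $\P(\|\Delta_N\| \geq \epsilon/2)$ is controlled by an elementary union bound over $NL^d$ scalar Gaussian tails. Weyl's inequality then transfers the no-outlier statement from $\widetilde{H_N(u)}$ to $H_N(u)$.

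In short: you perturb the MDE and must then control how the spectral edge moves; the paper perturbs the random matrix and only needs to control an operator norm of a diagonal Gaussian, which is trivial. Your moment-method route (b) is unnecessary here, and your finite-cover argument for uniformity in $u$ is fine but also not needed, since the cited local law is already uniform over model parameters.
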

\begin{proof}
The local law \cite[Theorem 2.4, Remark 2.5(v)]{AltErdKruNem2019} tells us that, for every $\epsilon$ and $R$, there exists $C_{\epsilon,R}$ such that
\begin{equation}
\label{eqn:FLD_Htilde}
    \inf_{u \in B_R} \P\left(\Spec(\widetilde{H_N(u)}) \subset \left[\mathtt{l}(\mu_\infty(u))-\frac{\epsilon}{2}, \mathtt{r}(\mu_\infty(u))+\frac{\epsilon}{2}\right]\right) \geq 1 - \frac{C_{\epsilon,R}}{N^{100}}.
\end{equation}
(We can take the infimum over $u \in B_R$ because the local-law estimates are uniform over all models possessing the same ``model parameters,'' see the remarks just before Theorem 2.4 there. Our model parameters depend on $u$ but can be taken uniformly over $u \in B_R$, for example because $\sup_{u \in B_R} \|A_N(u)\| < \infty$.)

Notice that 
\[
    \Delta_N = H_N(u) - \widetilde{H_N(u)} = W_N - \widetilde{W_N}
\]
is a diagonal matrix with independent Gaussian entries of variance $J^2/N$ that does not depend on $u$. Thus
\begin{equation}
\label{eqn:FLD_DeltaN}
    \sup_{u \in \R^{L^d}} \P\left(\abs{\lambda_{\min{}}(H_N(u)) - \lambda_{\min{}}(\widetilde{H_N(u)})} \geq \frac{\epsilon}{2}\right) \leq \P\left(\|\Delta_N\| \geq \frac{\epsilon}{2}\right) \leq \frac{2J\sqrt{N}L^d}{\epsilon}e^{-N\epsilon^2/(8J^2)}
\end{equation}
and similarly for $\lambda_{\min{}}$. Since 
\[
    \P(\lambda_{\max{}}(H_N(u)) \geq \mathtt{r}(\mu_\infty(u)) + \epsilon) \leq \P\left(\lambda_{\max{}}(\widetilde{H_N(u)}) \geq \mathtt{r}(\mu_\infty(u))+\frac{\epsilon}{2}\right) + \P\left(\abs{\lambda_{\max{}}(H_N(u)) - \lambda_{\max{}}(\widetilde{H_N(u)})} \geq \frac{\epsilon}{2}\right),
\]
and similarly for $\lambda_{\min{}}$, \eqref{eqn:FLD_Htilde} and \eqref{eqn:FLD_DeltaN} imply \eqref{eqn:FLD_nooutliers}.

For the other inequality, namely that $\liminf_{N \to \infty} \lambda_{\max{}}(H_N(u)) \geq \mathtt{r}(\mu_\infty(u))$ in probability, we note that $\hat{\mu}_{H_N(u)}$ concentrates about $\mu_\infty(u)$ in the sense of Lemma \ref{lem:fld_NlogN}. The smallest eigenvalue is handled similarly.
\end{proof}

\begin{lem}
\label{lem:FLD_topology}
With $\mc{G}_{+\epsilon}$ as defined in \cite[(4.5)]{BenBouMcK2021I} and $\mc{G}$ as defined in \eqref{eqn:FLD_G}, we have that each $\mc{G}_{+\epsilon}$ is convex, that $\mc{G}_{+1}$ has positive measure, and that 
\[
    \overline{\bigcup_{\epsilon > 0} \mc{G}_{+\epsilon}} = \mc{G}.
\]
\end{lem}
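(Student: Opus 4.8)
The plan is to reduce all three assertions to one structural fact about the left edge: as a function of $u \in \R^{L^d}$, the map $u \mapsto \mathtt{l}(\mu_\infty(u))$ is concave, and it is equivariant under the diagonal shift, $\mathtt{l}(\mu_\infty(u + s\mathbf{1})) = \mathtt{l}(\mu_\infty(u)) + s$ for $s \in \R$, where $\mathbf{1} = (1,\dots,1)$. Unwinding the definition in \cite[(4.5)]{BenBouMcK2021I} and using that $\mu_\infty(u)$ has a density (Lemma \ref{lem:kroneckermde_Ldbound}), one has $\mc{G}_{+\epsilon} = \{u : \mathtt{l}(\mu_\infty(u)) \geq \epsilon\}$, and in particular $\mc{G} = \{u : \mathtt{l}(\mu_\infty(u)) \geq 0\}$. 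Granting the structural fact, $\mc{G}_{+\epsilon}$ is convex as a superlevel set of a concave function, and $\mc{G}$ is closed because a finite concave function on $\R^{L^d}$ is continuous.

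To establish concavity I would pass through the finite-$N$ matrices. Since $H_N(u) = A_N(u) + W_N$ with $A_N(u) = a(u) \otimes \Id_N$ and $a(u) = -t_0\Delta + \diag(u) + \mu_0\Id$, the map $u \mapsto H_N(u)$ is affine (only the summand $\diag(u) \otimes \Id_N$ moves), so for each fixed realization of $W_N$ the map $u \mapsto \lambda_{\min}(H_N(u)) = \min_{\|v\|=1} \ip{v, H_N(u) v}$ is a pointwise minimum of affine functions, hence concave. Thus for $u = \theta u_1 + (1-\theta)u_2$ we have $\lambda_{\min}(H_N(u)) \geq \theta \lambda_{\min}(H_N(u_1)) + (1-\theta)\lambda_{\min}(H_N(u_2))$ surely. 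By Lemma \ref{lem:FLD_nooutliers}, $\lambda_{\min}(H_N(v)) \to \mathtt{l}(\mu_\infty(v))$ in probability for each $v$; extracting a subsequence along which this convergence holds almost surely and simultaneously at the three points $u_1, u_2, u$, and letting $N \to \infty$, yields the concavity inequality for $\mathtt{l}(\mu_\infty(\cdot))$. (Alternatively, this concavity is the content behind Proposition \ref{prop:FLDconcavity}, proved at the level of the MDE.) Equivariance is even easier: $a(u + s\mathbf{1}) = a(u) + s\Id$ forces $H_N(u + s\mathbf{1}) = H_N(u) + s\Id$, so $\lambda_{\min}$ shifts by $s$, and one passes to the limit.

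The three assertions then follow quickly. \emph{Convexity} and \emph{closedness} were noted above. For \emph{positive measure of $\mc{G}_{+1}$}: if $\min_i u_i \geq T$ then, since $-t_0\Delta$ is positive semidefinite, $\lambda_{\min}(A_N(u)) = \lambda_{\min}(a(u)) \geq \mu_0 + T$, hence $\lambda_{\min}(H_N(u)) \geq \mu_0 + T - \|W_N\|$; using the tail bound $\P(\|W_N\| \geq t) \leq e^{-cN\max(0,t-C)}$ recalled in the proof of Lemma \ref{lem:fld_detsublinear} together with Lemma \ref{lem:FLD_nooutliers}, this gives $\mathtt{l}(\mu_\infty(u)) \geq \mu_0 + T - C$, which is $\geq 1$ once $T \geq 1 + C - \mu_0$. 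Thus $\mc{G}_{+1}$ contains the shifted orthant $\{u : \min_i u_i \geq 1 + C - \mu_0\}$, of infinite Lebesgue measure. For the identity $\overline{\bigcup_{\epsilon>0} \mc{G}_{+\epsilon}} = \mc{G}$: the inclusion ``$\subseteq$'' holds since $\mc{G}_{+\epsilon} \subseteq \mc{G}$ for every $\epsilon > 0$ and $\mc{G}$ is closed; for ``$\supseteq$'', given $u_0 \in \mc{G}$ and $s > 0$, equivariance gives $\mathtt{l}(\mu_\infty(u_0 + s\mathbf{1})) = \mathtt{l}(\mu_\infty(u_0)) + s \geq s$, so $u_0 + s\mathbf{1} \in \mc{G}_{+s} \subseteq \bigcup_{\epsilon > 0} \mc{G}_{+\epsilon}$, and letting $s \downarrow 0$ places $u_0$ in the closure.

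The one genuinely substantial point is the concavity of the left edge; everything else is soft topology and Weyl-type inequalities. The matrix argument above is self-contained but leans on the limit identification $\lambda_{\min}(H_N(u)) \to \mathtt{l}(\mu_\infty(u))$ from Lemma \ref{lem:FLD_nooutliers} and on a little care in passing a pointwise inequality through a limit of random variables; the alternative is to argue concavity directly from the Matrix Dyson Equation and its subordination function, which is the route behind Proposition \ref{prop:FLDconcavity}.
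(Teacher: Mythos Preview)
Your proof is correct and follows essentially the same route as the paper: both exploit the affinity of $u \mapsto H_N(u)$ to get concavity of $\lambda_{\min}(H_N(u))$ and pass to the limit via Lemma~\ref{lem:FLD_nooutliers} for convexity of $\mc{G}_{+\epsilon}$, both use the diagonal shift $H_N(u+s\mathbf{1}) = H_N(u) + s\Id$ for the closure identity, and both lower-bound $\lambda_{\min}(A_N(u))$ by $\min_i u_i$ (plus constants) for positive measure of $\mc{G}_{+1}$. One small inaccuracy worth flagging: your parenthetical that concavity of $u \mapsto \mathtt{l}(\mu_\infty(u))$ ``is the content behind Proposition~\ref{prop:FLDconcavity}'' is off---that proposition concerns the functional $\int \log\abs{\lambda}\,\mu_\infty(u,\diff\lambda) - \|u\|^2/(2J^2L^d)$, not the left edge---but since your main matrix argument is self-contained this does not affect the validity of the proof.
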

\begin{proof}
Whenever $u, v \in \R^{L^d}$ and $t \in [0,1]$, one can check $H_N(tu+(1-t)v) = tH_N(u) + (1-t)H_N(v)$; thus
\[
    \lambda_{\min{}}(H_N(tu+(1-t)v)) \geq t\lambda_{\min{}}(H_N(u)) + (1-t)\lambda_{\min{}}(H_N(v))
\]
almost surely, and by letting $N \to \infty$ and applying the convergence in probability of Lemma \ref{lem:FLD_nooutliers}  we obtain $
    \mathtt{l}(\mu_\infty(tu+(1-t)v)) \geq t\mathtt{l}(\mu_\infty(u)) + (1-t)\mathtt{l}(\mu_\infty(v)).
$
Hence each $\mc{G}_{+\epsilon}$ is convex. 

Since $-t\Delta$ and $\mu\Id$ are positive semidefinite, 
\[
    \lambda_{\min{}}(A_N(u)) = \lambda_{\min{}}(a(u) \otimes \Id) = \lambda_{\min{}}(a(u)) = \lambda_{\min{}}(-t\Delta + \diag(u) + \mu\Id) \geq \min (u_1, \ldots, u_{L^d}).
\]
On the other hand,
\[
    \lambda_{\min{}}(W_N) = \lambda_{\min{}}\left(\sum_{i=1}^{L^d} E_{ii} \otimes X_i\right) = \min_{i=1}^{L^d}(\lambda_{\min{}}(X_i))
\]
which tends almost surely to $-2J$ in our normalization. Thus
\[
    \liminf_{N \to \infty} \lambda_{\min{}}(H_N(u)) \geq \min(u_1, \ldots, u_{L^d}) - 2J.
\]
which, combined with the convergence in probability of Lemma \ref{lem:FLD_nooutliers}, shows that $\mc{G}_{+1}$ has positive measure.

Finally, we note that the inclusion $\cup_{\epsilon > 0} \mc{G}_{+\epsilon} \subset \mc{G}$ is clear, and that $\mc{G}$ is closed by \cite[Lemma 4.6]{BenBouMcK2021I}. To show the reverse inclusion, write $\vec{1} \in \R^{L^d}$ for the vector of all ones; then it is easy to check $H_N(u + \delta \vec{1}) = H_N(u) + \delta \Id$, so by the convergence in probability of Lemma \ref{lem:FLD_nooutliers} we have
$
    \mathtt{l}(\mu_\infty(u+\delta \vec{1})) = \mathtt{l}(\mu_\infty(u)) + \delta.
$
This completes the proof.
\end{proof}

\begin{prop}
We have
\begin{equation}
\label{eqn:FLD_detcon}
    \lim_{N \to \infty} \frac{1}{NL^d} \log \int_{\R^{L^d}} e^{-N\frac{\|u\|^2}{2J^2}} \E[\abs{\det(H_N(u))}] \diff u = \sup_{u \in \R^{L^d}} \left\{ \int \log\abs{\lambda} \mu_\infty(u,\lambda) \diff \lambda - \frac{\|u\|^2}{2J^2L^d}\right\}
\end{equation}
and
\begin{equation}
\label{eqn:FLD_detconmin}
    \limsup_{N \to \infty} \frac{1}{NL^d} \log \int_{\R^{L^d}} e^{-N\frac{\|u\|^2}{2J^2}} \E[\abs{\det(H_N(u))}\mathbf{1}_{H_N(u) \geq 0}] \diff u = \sup_{u \in \mc{G}} \left\{ \int \log\abs{\lambda} \mu_\infty(u,\lambda) \diff \lambda - \frac{\|u\|^2}{2J^2L^d}\right\}
\end{equation}
where $\mc{G}$ is defined in \eqref{eqn:FLD_G}.
\end{prop}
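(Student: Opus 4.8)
The plan is to recognize the left-hand sides of \eqref{eqn:FLD_detcon} and \eqref{eqn:FLD_detconmin} as instances of the Laplace-type integral \eqref{eqn:overallproblem} (here with $m = L^d$, the matrices being the block-diagonal Gaussian matrices $H_N(u)$ of size $NL^d$, and the Gaussian weight carrying variance $J^2 L^d$ in the $NL^d$-normalization after a harmless affine rescaling of $u$), and then to invoke the Laplace-method results of the companion paper \cite{BenBouMcK2021I}, checking that every hypothesis is furnished by the lemmas of this subsection. So the proof is really a matter of assembling inputs.

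\textbf{Single-matrix input and continuity of the functional.} First I would record the determinant asymptotics for a \emph{fixed} $u$. Lemma \ref{lem:fld_reference_muinfty} gives $\mathrm{W}_1(\E[\hat{\mu}_{H_N(u)}], \mu_\infty(u)) \le N^{-\epsilon}$, locally uniformly in $u$, and Lemma \ref{lem:kroneckermde_Ldbound} gives the uniform bound $\|\mu_\infty(u,\cdot)\|_{L^\infty} \le \sqrt{L^d}/J$ together with the absence of atoms; with these, \cite[Corollary 1.9]{BenBouMcK2021I} yields $\E[\abs{\det H_N(u)}] = \exp\!\big(N L^d \int_\R \log\abs{\lambda}\,\mu_\infty(u,\diff\lambda) + o(N)\big)$ with the $o(N)$ error locally uniform in $u$. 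The uniform density bound is exactly what makes the map $u \mapsto \int_\R \log\abs{\lambda}\,\mu_\infty(u,\diff\lambda)$ finite (the logarithmic singularity at the origin is integrable against a bounded density) and continuous in $u$: away from the origin one uses the $\mathrm{W}_1$-continuity of $u \mapsto \mu_\infty(u)$, and near the origin the mass $\int_{\abs{\lambda}<\delta}\abs{\log\abs{\lambda}}\,\mu_\infty(u,\diff\lambda)$ is controlled uniformly by the density bound.

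\textbf{Tail control and the Laplace argument.} Lemma \ref{lem:fld_detsublinear} gives $\E[\abs{\det H_N(u)}] \le (C\max(\|u\|,1))^N$, so the Gaussian weight forces $\int_{\{\|u\|>R\}} e^{-N\|u\|^2/(2J^2)}\,\E[\abs{\det H_N(u)}]\,\diff u \le e^{-cNR^2}$ for $R$ large, and it suffices to treat the integral over a fixed large ball $B_R$. On $B_R$ one runs the standard Laplace argument: for the upper bound, cover $B_R$ by finitely many balls of radius $\eta$, use the locally uniform determinant asymptotics and the continuity of the functional to bound the contribution of each small ball by $\exp\!\big(N L^d(\sup_{u\in\R^{L^d}}\{\int\log\abs{\lambda}\,\mu_\infty(u,\diff\lambda)-\|u\|^2/(2J^2L^d)\}+o_\eta(1))\big)$, sum over the finitely many balls, and let $\eta\downarrow 0$; for the lower bound, restrict the integral to a small ball around a maximizer $u^\ast$ (which exists since the functional is continuous and, by the tail bound, effectively coercive) and apply the matching lower half of the determinant asymptotics. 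This is precisely the content of the Laplace-method statement proved in \cite{BenBouMcK2021I} under the hypothesis $\mu_N(u)\to\mu_\infty(u)$ with regularity, which is what the lemmas above verify; in practice I would simply cite it.

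\textbf{The minima version and the main obstacle.} For \eqref{eqn:FLD_detconmin} the same scheme applies with two extra ingredients from Lemmas \ref{lem:FLD_nooutliers} and \ref{lem:FLD_topology}. On $u\in\mc{G}_{+\epsilon}$ the local law (used to prove Lemma \ref{lem:FLD_nooutliers}) gives $\P(H_N(u)\not\ge 0)$ exponentially small, so $\E[\abs{\det H_N(u)}\mathbf{1}_{H_N(u)\ge 0}]$ and $\E[\abs{\det H_N(u)}]$ coincide at exponential scale there; bounding the indicator by $1$ on $\mc{G}_{+\epsilon}$ and using that off $\mc{G}$ one has $\lambda_{\min}(H_N(u))\to\mathtt{l}(\mu_\infty(u))<0$ in probability (so the indicator's contribution is negligible), together with convexity of $\mc{G}_{+\epsilon}$, positivity of $\abs{\mc{G}_{+1}}$ (which keeps the lower bound non-vacuous), and $\overline{\cup_{\epsilon>0}\mc{G}_{+\epsilon}}=\mc{G}$ with continuity of the functional (which lets a supremum over $\mc{G}_{+\epsilon}$ pass to one over $\mc{G}$ as $\epsilon\downarrow 0$), gives the restricted variational formula; this is the restricted-integral statement of \cite{BenBouMcK2021I}. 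The genuinely delicate point — and the one I would defer to the companion paper rather than redo — is the near-singular regime where $\mu_\infty(u)$ places mass very close to $0$: one must know both that the limiting functional stays finite and continuous there (handled by the uniform density bound of Lemma \ref{lem:kroneckermde_Ldbound}) and that the finite-$N$ expected determinant is not anomalously depressed by near-zero eigenvalues, which is exactly where the careful determinant analysis of \cite{BenBouMcK2021I} is needed.
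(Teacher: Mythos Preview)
Your proposal is correct and follows essentially the same approach as the paper: both proofs amount to invoking the Laplace-method theorems from \cite{BenBouMcK2021I} (Theorem~4.1 for total critical points, Theorem~4.5 for minima, with $\alpha=\tfrac{1}{2J^2L^d}$ after the $N\mapsto NL^d$ rescaling) and pointing to the preceding lemmas as verification of the hypotheses. One small omission: for the minima case you argue that off~$\mc{G}$ ``the indicator's contribution is negligible'' via convergence in probability of $\lambda_{\min}$, but convergence in probability alone cannot beat the potentially exponential size of $\abs{\det H_N(u)}$; the paper explicitly lists Lemma~\ref{lem:fld_NlogN} (the $N\log N$ concentration rate) among the checked hypotheses, and that faster-than-exponential rate is exactly what \cite[Theorem 4.5]{BenBouMcK2021I} requires to make the off-$\mc{G}$ contribution negligible at exponential scale.
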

\begin{proof}
For \eqref{eqn:FLD_detcon}, we apply \cite[Theorem 4.1]{BenBouMcK2021I} with $\alpha = \frac{1}{2J^2L^d}$, $p = 0$, and $\mf{D} = \R^{L^d}$. (Technically, we are applying this theorem with $N$ there replaced by $NL^d$ here, which is the size of $H_N$; this is why $\alpha$ is $\frac{1}{2J^2L^d}$ and not $\frac{1}{2J^2}$.) We checked the conditions of this theorem in \cite[Corollary 1.9]{BenBouMcK2021I} and Lemmas \ref{lem:fld_reference_muinfty} and \ref{lem:fld_detsublinear}. (All the results are locally uniform in $u$ because all the parameters of the random matrices are.) For \eqref{eqn:FLD_detconmin}, we apply \cite[Theorem 4.5]{BenBouMcK2021I} with $\alpha = \frac{1}{2J^2L^d}$, $p = 0$, and $\mf{D} = \R^{L^d}$. We checked the conditions for this result in Lemmas \ref{lem:fld_NlogN}, \ref{lem:FLD_nooutliers}, and \ref{lem:FLD_topology}. 
\end{proof}

\begin{proof}[Proof of Theorem \ref{thm:FLDcomplexity}]
Kac-Rice computations in \cite{FyoLeD2020} show,  exactly for finite $N$, 
\begin{align*}
    \frac{1}{NL^d} \log \E[\mc{N}_{\text{tot}}] &= -\frac{1}{L^d}\log(\det(u_0-t_0\Delta)) + \frac{1}{NL^d} \log \int_{\R^{L^d}} \frac{e^{-N\frac{\|u\|^2}{2J^2}}}{(\sqrt{2\pi J^2/N})^{L^d}} \E[\abs{\det(H_N(u))}] \diff u, \\
    \frac{1}{NL^d} \log \E[\mc{N}_{\text{st}}] &= -\frac{1}{L^d}\log(\det(u_0-t_0\Delta)) + \frac{1}{NL^d} \log \int_{\R^{L^d}} \frac{e^{-N\frac{\|u\|^2}{2J^2}}}{(\sqrt{2\pi J^2/N})^{L^d}} \E[\abs{\det(H_N(u))}\mathbf{1}_{H_N(u) \geq 0}] \diff u,
\end{align*}
where $H_N(u)$ is as above. Then we apply the above Proposition.
\end{proof}


\subsection{Analyzing the variational formula.}\ The following  concavity property is the key reason the complexity thresholds can be calculated explicitly,  from the variational formulas appearing in the previous section.

\begin{prop}
\label{prop:FLDconcavity}
The function
\[
    \mc{S}[u] = \int_\R \log\abs{\lambda} \mu_\infty(u,\lambda) \diff \lambda - \frac{\|u\|^2}{2J^2L^d}
\]
is concave.
\end{prop}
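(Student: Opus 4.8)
The plan is to regularize the $\log\abs{\cdot}$ singularity, compute the Hessian of the regularized functional explicitly from the Matrix Dyson Equation \eqref{eqn:kroneckermde_diagonal}, and then reduce concavity to a positivity property of the MDE stability operator, which follows from the MDE's imaginary-part relation together with the Schur product theorem. Throughout, write $M_\infty(u,z) := (a(u) - J^2 m_\infty(u,z) - z\Id)^{-1}$, so that $m_\infty(u,z)_i = (M_\infty(u,z))_{ii}$; recall that $M_\infty(u,z)$ is (complex) symmetric, by transposing the MDE and using that $a(u)$ is symmetric (as in the proof of Lemma \ref{lem:kroneckermde_Ldbound}), and that $m_\infty(u,z)$ is real-analytic in $u$ for fixed $z \in \mathbb{H}$ by the implicit function theorem, the linearization of \eqref{eqn:kroneckermde_diagonal} in $m$ being the stability operator $\Id - J^2 B(u,z)$ with $B(u,z)_{jl} := (M_\infty(u,z))_{jl}^2$, invertible on $\mathbb{H}$ (standard, and re-derived in Step 3 below).

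\emph{Step 1: regularization and a closed form.} For $\epsilon > 0$ put $\mc{S}_\epsilon[u] := \tfrac12 \int_\R \log(\lambda^2 + \epsilon^2)\,\mu_\infty(u,\diff\lambda) - \tfrac{\|u\|^2}{2J^2 L^d}$. Since $\mu_\infty(u)$ is compactly supported with density bounded by $\sqrt{L^d}/(J\pi)$ (Lemma \ref{lem:kroneckermde_Ldbound}), we have $\int \abs{\log\abs{\lambda}}\,\mu_\infty(u,\diff\lambda) < \infty$, so $\mc{S}_\epsilon[u] \to \mc{S}[u]$ pointwise as $\epsilon \downarrow 0$ by dominated convergence; as a pointwise limit of concave functions is concave, it suffices to show each $\mc{S}_\epsilon$ is concave. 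I would first record the identity $\tfrac12\int\log(\lambda^2+\epsilon^2)\,\mu_\infty(u,\diff\lambda) = \re F(u,\ii\epsilon)$, where $F(u,z) := \tfrac1{L^d}\log\det(a(u) - J^2 m_\infty(u,z) - z\Id) + \tfrac{J^2}{2L^d}\sum_{i=1}^{L^d} m_\infty(u,z)_i^2$ (with the branch of $\log\det$ continuous on $\mathbb{H}$ with $F(u,z) \sim \log(-z)$ at $\infty$). This follows because $F(u,\cdot)$ and $z \mapsto \int\log(\lambda - z)\,\mu_\infty(u,\diff\lambda)$ are holomorphic on $\mathbb{H}$, have the same limiting behavior as $z \to \infty$, and have the same $z$-derivative $-\tfrac1{L^d}\Tr M_\infty(u,z) = -\tfrac1{L^d}\sum_i m_\infty(u,z)_i$; the cancellation computing $\partial_z F$ uses precisely $m_\infty(u,z)_i = (M_\infty(u,z))_{ii}$.

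\emph{Step 2: the Hessian from the MDE.} Differentiating \eqref{eqn:kroneckermde_diagonal} in $u_k$, using $\partial_{u_k} a(u) = E_{kk}$ and the symmetry of $M_\infty$, yields $(\Id - J^2 B(u,z))\,\partial_{u_k} m_\infty(u,z) = -B(u,z)e_k$, hence $\partial_{u_k} m_\infty = -(\Id - J^2 B)^{-1} B\, e_k$. A direct computation in the formula for $F$ gives the clean identity $\partial_{u_j} F(u,z) = \tfrac1{L^d} m_\infty(u,z)_j$ (the $\partial_{u_j} m_\infty$ contributions cancel). Therefore, using $(\Id - J^2 B)^{-1} B = \tfrac1{J^2}\big((\Id - J^2 B)^{-1} - \Id\big)$,
\[
\nabla^2_u F(u,z) = -\tfrac1{L^d}(\Id - J^2 B)^{-1} B = -\tfrac1{J^2 L^d}\big((\Id - J^2 B(u,z))^{-1} - \Id\big),
\]
so that $\nabla^2_u \mc{S}_\epsilon[u] = \re \nabla^2_u F(u,\ii\epsilon) - \tfrac1{J^2 L^d}\Id = -\tfrac1{J^2 L^d}\,\re\big[(\Id - J^2 B(u,\ii\epsilon))^{-1}\big]$. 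Writing $Q := J^2 B(u,\ii\epsilon)$ (complex symmetric), the algebraic identity $\re\big[(\Id - Q)^{-1}\big] = (\Id - Q)^{-1}(\Id - \re Q)\,\overline{(\Id - Q)^{-1}}$ exhibits $\re[(\Id-Q)^{-1}]$ as a congruence of the real symmetric matrix $\Id - \re Q$; thus $\nabla^2_u \mc{S}_\epsilon[u] \preceq 0$ (and $\Id - Q$ is automatically invertible) once we show $\re Q = J^2 \re B(u,\ii\epsilon) \preceq \Id$.

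\emph{Step 3: positivity from the imaginary-part relation.} Write $M := M_\infty(u,\ii\epsilon) = \re M + \ii\, \im M$ with $\im M \succ 0$. Entrywise $\re(M_{jl}^2) = (\re M_{jl})^2 - (\im M_{jl})^2$, so $\re B = |M|^{\odot 2} - 2(\im M)^{\odot 2}$, where $(|M|^{\odot 2})_{jl} = |M_{jl}|^2$ and $(\im M)^{\odot 2} \succeq 0$ by the Schur product theorem; hence $J^2 \re B \preceq J^2 |M|^{\odot 2}$. Taking imaginary parts in \eqref{eqn:kroneckermde_diagonal} and conjugating gives $\im m_\infty = |M|^{\odot 2}\big(\epsilon \vec{1} + J^2 \im m_\infty\big)$, i.e. $(\Id - J^2 |M|^{\odot 2})\,\im m_\infty = \epsilon\, |M|^{\odot 2} \vec{1} > 0$ componentwise. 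Since $\im m_\infty > 0$ componentwise and $J^2 |M|^{\odot 2}$ is a symmetric matrix with nonnegative entries, Perron--Frobenius (the Collatz--Wielandt bound) gives $\lambda_{\max}(J^2 |M|^{\odot 2}) = \rho(J^2 |M|^{\odot 2}) < 1$, i.e. $J^2 |M|^{\odot 2} \prec \Id$; therefore $J^2 \re B \prec \Id$, completing the proof.

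\emph{Main obstacle.} The algebra is short once assembled; the care goes into Step 1 — justifying the closed form for the regularized functional, the (real-)analyticity of $u \mapsto m_\infty(u,z)$ underlying the Hessian computation, and the $\epsilon \downarrow 0$ limit — and into applying Step 3 correctly: $|M|^{\odot 2}$ need not be positive semidefinite, but being symmetric with nonnegative entries suffices for the spectral-radius bound, and it is essential that the term $-2(\im M)^{\odot 2}$ in $\re B$ has the favorable sign.
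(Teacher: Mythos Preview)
Your proof is correct and follows essentially the same route as the paper: you compute the Hessian of the log-potential from the MDE to obtain $\nabla^2_u \mc{S}_\epsilon = -\tfrac{1}{J^2L^d}\re[(\Id-J^2B)^{-1}]$ (matching the paper's \eqref{eqn:Erd}), and then establish the required positivity via the imaginary-part relation, Perron--Frobenius, and the Schur product theorem (this is the content of Lemma~\ref{lem:erdoskruger}). Your packaging differs only in minor ways---the explicit $\epsilon$-regularization and the congruence identity $\re[(\Id-Q)^{-1}] = (\Id-Q)^{-1}(\Id-\re Q)(\Id-Q)^{-\ast}$ in place of the paper's direct quadratic-form estimate---but the substance is the same.
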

\begin{proof}
We assume $d=1$ below, the general case requiring only notational change of $L$ into $L^d$. The MDE for our problem, namely \eqref{eqn:genericmde} with the choice $\mc{S}[T] = J^2\diag(T)$, has a matrix solution $M(u,z) = M_\infty(u,z)$ (we now drop the $\infty$ to save notation). The problem can be reduced to a vector MDE for $\vec{m}(u,z) = \diag(M(u,z)) =: (m_1(u,z), \ldots, m_L(u,z))$ by taking the diagonal of both sides of \eqref{eqn:genericmde}. (In fact, $M(u,z)$ can be reconstructed from knowledge of $\vec{m}(u,z)$ via \eqref{eqn:genericmde}.) The diagonal MDE takes the form
\begin{equation}\label{eqn:MDEdiag}
-\diag(m_1,\dots,m_L)=\diag[(z-\mu+t\Delta-\diag(u_1,\dots,u_L)+J^2\diag(m_1,\dots,m_L))^{-1}].
\end{equation}
We denote $\partial_k=\partial_{u_k}$, and write $m=\frac{1}{L}\sum_{1}^L m_i$ for the Stieltjes transform of $\mu_\infty$. The first essential observation is
\begin{equation}\label{eqn:diffMDE}
\frac{\rd}{\rd u_k} (-Lm)=\frac{\rd}{\rd z} m_k.
\end{equation}
Indeed, for any invertible matrix $B$, we have 
$$
(B^{-1})_{kk}={\rm Tr}(B^{-1}|e_k\rangle\langle e_k|)=\partial_{u=0}\log \det(\Id+u B^{-1}|e_k\rangle\langle e_k|)=\partial_{u=0}\log \det(B+u |e_k\rangle\langle e_k|),
$$
so that, denoting 
$B(z,u,\vec{m})=z-\mu+t\Delta-\diag(u_1,\dots,u_L)+J^2\diag(m_1,\dots,m_L)$, we have
\begin{align*}
\frac{\rd}{\rd u_k}\log\det B&=\partial_k\log\det B+J^2 \sum_j\partial_{m_j}\log\det B\cdot \partial_k m_j=m_k- J^2\sum_j m_j\partial_k m_j,\\
\frac{\rd}{\rd z}\log\det B&=\partial_z\log\det B+J^2\sum_j\partial_{m_j}\log\det B\cdot \partial_z m_j=-Lm-J^2\sum_j m_j\partial_z m_j,
\end{align*}
i.e.
\begin{align*}
m_k&=\frac{\rd}{\rd u_k}(\log\det B +\frac{J^2}{2}\sum_jm_j^2),\\
-Lm&=\frac{\rd}{\rd z}(\log\det B +\frac{J^2}{2}\sum_jm_j^2).
\end{align*}
We conclude that $\frac{\rd}{\rd u_k} (-Lm)=\frac{\rd}{\rd z} m_k$.

\medskip

With \eqref{eqn:diffMDE}, we obtain
\begin{equation}\label{eqn:key2}
\partial_k \int \log\abs{\lambda} \mu_\infty(u,\lambda)=\frac{1}{\pi L} \int \log\abs{\lambda} \partial_\lambda\im m_k(\lambda+\ii 0^+)=-\frac{1}{\pi L}\int \frac{1}{\lambda} \im m_k(\lambda+\ii 0^+)=-\frac{1}{L}\re m_k(\ii 0^+).
\end{equation}
Now, from \eqref{eqn:MDEdiag}, we obtain
$$
\partial_k (m_1,\ldots,m_L) =M(E_k+J^2\diag(\partial_k m_1,\dots,\partial_k m_L))M=R(J^2\partial_k(m_1,\dots,m_L)+e_k)^T
$$
where the matrix $E_k$ (respectively, the vector $e_k$) is $0$'s except $1$ at position $(k,k)$ (respectively, position $k$), and where $R = R(u,z)$ is the linear operator defined by $(Rv)_i=\sum (M_{ij})^2v_j$, with $M=M(u,z)$ the MDE solution matrix. Thus we have
$$
(1-J^2R)(\partial_k \vec{m})=R(e_{k})^T,
$$
from which
\[
	\partial_k \vec{m} = \frac{1}{J^2}(1-J^2R)^{-1}(J^2R-1+1)(e_k)^T = \frac{1}{J^2}(-\Id + (1-J^2R)^{-1})(e_k)^T.
\]
Taking the $j$th component of both sides, we get the scalar equation
\[
	\partial_k m_j = \frac{1}{J^2}(-\Id+(1-J^2R)^{-1})_{jk}.
\]
Together with \eqref{eqn:key2}, this gives
\begin{equation}\label{eqn:Erd}
	\nabla^2_u \int \log\abs{\lambda-\mu} \mu_\infty(u,\lambda) = \frac{1}{J^2L}(\Id - \re[(1-J^2R)^{-1}]).
\end{equation}
Lemma \ref{lem:erdoskruger} below, due to L\'{a}szl\'{o} Erd\H{o}s and Torben Kr\"{u}ger,  shows that $\re[(1-J^2R)^{-1}] \geq 0$ in the sense of quadratic forms. Along with \eqref{eqn:Erd}, this gives concavity of $\int_\R \log\abs{\lambda} \mu_\infty(u,\lambda) \diff \lambda - \frac{\|u\|^2}{2J^2L^d}$ in $u$. 
\end{proof}

\medskip

\begin{lem}
\label{lem:erdoskruger}
For each $u \in \R^{L^d}$ and each $z \in \mathbb{H}$, let $R(u,z) \in \C^{L^d \times L^d}$ be defined elementwise by 
\[
    R(u,z)_{jk} = (M(u,z)_{jk})^2 =: e^{\ii \theta(u,z)_{jk}} \abs{M(u,z)_{jk}}^2,
\]
where $M(u,z) = M_\infty(u,z)$. Then for every $u \in \R^{L^d}$, every $z \in \mathbb{H}$, and every nonzero vector $v$ we have
\[
    \re\ip{v,(\Id-J^2R(u,z))v} > 0.
\]
In particular, for any $w$, written as $(1-J^2R)v$, we have
\[
    \re \langle w, (1-J^2R)^{-1}w \rangle=\re \langle (1-J^2R)v, v \rangle > 0.
\]
\end{lem}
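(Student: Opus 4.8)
The plan is to reduce the statement to a Perron--Frobenius estimate on the entrywise squared-modulus matrix $F = (|M_{jk}|^2)_{j,k}$, where $M = M_\infty(u,z)$, following the computation already carried out in the proof of Lemma \ref{lem:kroneckermde_Ldbound} but squeezing out a strict inequality.

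First I would record the structure of $M$. Transposing the MDE \eqref{eqn:genericmde} with the choice $\mc{S}[T] = J^2 \diag(T)$ and using that $a(u)$ is symmetric shows that $M(u,z)$ is complex symmetric, so $M^\ast = \bar M$. Taking the imaginary part of the MDE and conjugating (multiplying on the left by $M$ and on the right by $M^\ast$) gives $\im M = M(\eta\Id + J^2\im\diag(M))M^\ast$; reading off the diagonal and using symmetry yields the entrywise vector identity $(\Id - J^2 F)D = \eta\, F\mathbf 1$, where $D = \im\diag(M)$ is componentwise strictly positive (because the MDE solution satisfies $\im M > 0$) and $F$ is real symmetric with nonnegative entries.

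The key step is then to upgrade this to $\|F\| < 1/J^2$. Since $F_{ii} = |M_{ii}|^2 \geq D_i^2 > 0$, the vector $F\mathbf 1$ has strictly positive entries, hence $(\Id - J^2 F)D = \eta\, F\mathbf 1 > 0$ componentwise, i.e.\ $J^2 FD < D$ entrywise. By the standard comparison principle for nonnegative matrices this forces $\rho(J^2 F) \leq \max_i (J^2 FD)_i/D_i < 1$ (a maximum of finitely many numbers each strictly less than $1$), so $\rho(F) < 1/J^2$; since $F$ is real symmetric, $\|F\| = \rho(F) < 1/J^2$. To finish, for any $v \neq 0$ I would use $|R_{jk}| = |(M_{jk})^2| = |M_{jk}|^2 = F_{jk}$ and the triangle inequality to get $|\ip{v, Rv}| \leq \sum_{j,k} |v_j| F_{jk} |v_k| = \ip{|v|, F|v|} \leq \|F\|\,\|v\|^2 < \|v\|^2/J^2$, whence $\re\ip{v, (\Id - J^2 R)v} \geq \|v\|^2 - J^2|\ip{v, Rv}| > 0$. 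This also shows $\Id - J^2 R$ is injective, hence invertible; writing $w = (\Id - J^2 R)v$ and using $\ip{w, (\Id - J^2 R)^{-1}w} = \ip{(\Id - J^2 R)v, v} = \overline{\ip{v, (\Id - J^2 R)v}}$ gives $\re\ip{w, (\Id - J^2 R)^{-1}w} = \re\ip{v,(\Id - J^2 R)v} > 0$, which is the ``in particular'' claim.

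The only genuinely delicate point is obtaining the \textit{strict} bound $\|F\| < 1/J^2$: the argument in Lemma \ref{lem:kroneckermde_Ldbound} only needed the non-strict inequality, and there the extra factor $\tfrac{N+1}{N}$ provided strictness for free, whereas here strictness must be extracted from $\eta = \im z > 0$ together with the fact that $F$ has no zero row. A secondary point requiring care is that $M$ is complex symmetric but not Hermitian, so $M^\ast = \bar M$ must be inserted in the right places when taking imaginary parts; everything else is routine.
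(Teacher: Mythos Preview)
Your argument is correct. Both proofs pivot on the matrix $F=(|M_{jk}|^2)$, but they extract strict positivity differently. The paper keeps only the non-strict bound $\|F\|\le 1/J^2$ from Lemma~\ref{lem:kroneckermde_Ldbound}, then separates diagonal from off-diagonal terms in $\re\ip{v,(\Id-J^2R)v}$ and picks up the strictly positive remainder $J^2\sum_j(1-\cos\theta_{jj})|M_{jj}|^2|v_j|^2 = 2J^2\sum_j(\im M_{jj})^2|v_j|^2$ coming from the phase of the diagonal entries. You instead upgrade the norm bound itself to the strict $\|F\|<1/J^2$, using $\eta>0$ and the Collatz--Wielandt comparison $\rho(J^2F)\le\max_i(J^2FD)_i/D_i<1$, after which a single triangle inequality $|\ip{v,Rv}|\le\ip{|v|,F|v|}$ finishes the job. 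Both routes ultimately trace strictness back to $\im M>0$ (you use it for $D>0$, the paper for $\im M_{jj}>0$), so neither is deeper; yours is arguably cleaner in the final step, while the paper's avoids invoking the comparison principle and makes the source of positivity more visibly quantitative in $v$.
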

\begin{proof}
Consider the matrix $F(u,z) \in \R^{L^d \times L^d}$ defined elementwise by $F(u,z)_{jk} = \abs{M(u,z)_{jk}}^2$. The proof of Lemma \ref{lem:kroneckermde_Ldbound} shows that $\sup_{u \in \R^{L^d}, z \in \mathbb{H}} \|F(u,z)\| \leq \frac{1}{J^2}$. Given $v \in \C^{L^d}$, write $\abs{v} = (\abs{v_1}, \ldots, \abs{v_{L^d}})$; then 
\begin{align*}
    \re\ip{v,(\Id-J^2R(u,z))v} &\geq \re \sum_{j=1}^{L^d} \left(1-J^2\abs{M(u,z)_{jj}}^2e^{\ii\theta(\xi,z)_{jj}}\right)\abs{v_j}^2 - J^2\sum_{j \neq k} \abs{M(u,z)_{jk}}^2\abs{v_jv_k} \\
    &= \sum_{j=1}^{L^d} \left(1- J^2\abs{M(u,z)_{jj}}^2 \cos(\theta(u,z)_{jj})\right) \abs{v_j}^2 - J^2\sum_{j \neq k} \abs{M(u,z)_{jk}}^2\abs{v_jv_k} \\
    &= \ip{\abs{v},\left(\Id-J^2F(u,z)\right)\abs{v}} + J^2\sum_{j=1}^{L^d} (1-\cos(\theta(u,z)_{jj})) \abs{M(u,z)_{jj}}^2 \abs{v_j}^2 \\
    &\geq \ip{\abs{v},\left(\Id- J^2F(u,z)\right)\abs{v}} + J^2\sum_{j=1}^{L^d} 2(\im (M(u,z)_{jj}))^2 \abs{v_j}^2.
\end{align*}
The first term is nonnegative since $\|F(u,z)\| \leq \frac{1}{J^2}$, so we have
\[
    \re\ip{v,(\Id-J^2R(u,z))v} \geq 2J^2\left(\min_{j=1}^{L^d} \im (M(u,z)_{jj})\right)^2 \|v\|^2.
\]
But $\im (M(u,z)_{jj})$ is the $(j,j)$ entry of the matrix $\im M(u,z)$, which is (strictly) positive definite by the definition of the MDE.
\end{proof}

\begin{prop}
\label{prop:FLDrestricttodiagonal}
$\mc{S}$ is maximized on the diagonal of $\R^d$, i.e., 
\[
    \sup_{u \in \R^{L^d}} \mc{S}[u] = \sup_{u \in \R} \mc{S}[(u,\ldots,u)].
\]
\end{prop}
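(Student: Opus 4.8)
The plan is to exploit the translation symmetry of the periodic lattice $\Omega$. Write $G \cong (\Z/L\Z)^d$ for the group of lattice translations, so $\abs{G} = L^d$ equals the number of sites, and for $g \in G$ let $P_g \in \R^{L^d \times L^d}$ be the permutation matrix with $(P_g u)_x = u_{x-g}$. Since the periodic Laplacian is translation invariant, $P_g \Delta P_g^T = \Delta$; trivially $P_g \Id P_g^T = \Id$ and $P_g \diag(u) P_g^T = \diag(P_g u)$, so from \eqref{eqn:FLDdefau} we get $a(P_g u) = P_g\, a(u)\, P_g^T$.

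First I would check that $\mu_\infty$ is $G$-invariant, i.e.\ $\mu_\infty(P_g u) = \mu_\infty(u)$ for all $g$. Because $P_g$ is a permutation matrix, conjugation by $P_g$ commutes with $\diag(\cdot)$, hence with the self-energy operator $\mc{S}_\infty[T] = J^2\diag(T)$; feeding $P_g M_\infty(u,z) P_g^T$ into the MDE \eqref{eqn:genericmde} with $a$ replaced by $a(P_g u)$ and invoking uniqueness of the solution shows $M_\infty(P_g u, z) = P_g M_\infty(u,z) P_g^T$. Taking traces, $\tfrac{1}{L^d}\Tr M_\infty(P_g u, z) = \tfrac{1}{L^d}\Tr M_\infty(u,z)$ for every $z \in \mathbb{H}$, so the two measures have the same Stieltjes transform and therefore coincide. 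Since also $\|P_g u\| = \|u\|$, this yields $\mc{S}[P_g u] = \mc{S}[u]$ for every $g \in G$ and every $u \in \R^{L^d}$.

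Next, I would average over $G$ using concavity. Fix $u \in \R^{L^d}$ and set $\bar u = \tfrac{1}{L^d}\sum_{i} u_i$. Because $G$ acts transitively on the sites of $\Omega$, for each $x$ we have $\sum_{g \in G}(P_g u)_x = \sum_{g\in G} u_{x-g} = \sum_{y \in \Omega} u_y$, independent of $x$, so the group average $\tfrac{1}{\abs{G}}\sum_{g \in G} P_g u$ is exactly the constant vector $(\bar u, \ldots, \bar u)$. Proposition \ref{prop:FLDconcavity} and the $G$-invariance just established then give
\[
    \mc{S}[(\bar u, \ldots, \bar u)] = \mc{S}\!\left[ \tfrac{1}{\abs{G}}\sum_{g \in G} P_g u \right] \geq \tfrac{1}{\abs{G}}\sum_{g \in G} \mc{S}[P_g u] = \mc{S}[u].
\]
Hence $\sup_{u \in \R^{L^d}} \mc{S}[u] \leq \sup_{c \in \R} \mc{S}[(c, \ldots, c)]$, and the reverse inequality is immediate, proving the claim.

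The one place requiring care rather than routine computation is the covariance identity $M_\infty(P_g u, z) = P_g M_\infty(u,z) P_g^T$, which rests on the fact that conjugation by a permutation matrix commutes with the diagonal-restriction operator defining the MDE; it is straightforward here but worth spelling out. Everything else is bookkeeping: translation invariance of $\Delta$, invariance of the Euclidean norm under coordinate permutations, and Jensen's inequality. One could instead use the full automorphism group of the periodic lattice, but its translation subgroup already acts transitively on sites, which is all that is needed to collapse the optimization onto the diagonal.
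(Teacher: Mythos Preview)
Your proof is correct and follows essentially the same approach as the paper's: both use that lattice translations preserve $\mc{S}$ (since they preserve $a(u)$ and hence $\mu_\infty(u)$) together with the concavity of $\mc{S}$ from Proposition~\ref{prop:FLDconcavity}, then average over the translation group to land on the diagonal. Your version applies Jensen's inequality directly to an arbitrary $u$, whereas the paper first observes that the set of maximizers is nonempty, convex, and translation-invariant, then averages a maximizer; your route is marginally cleaner in that it does not require knowing in advance that the supremum is attained, but the two arguments are otherwise the same.
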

\begin{proof}
It suffices to show that the set of maximizers
\[
    \mc{M} = \left\{u \in \R^{L^d} : \mc{S}[u] = \sup_{v \in \R^{L^d}} \mc{S}[v]\right\}
\]
intersects the diagonal. First, $\mc{M}$ is nonempty, since (by \cite[Lemma 4.4]{BenBouMcK2021I}) $\lim_{\|u\| \to +\infty} \mc{S}[u] = -\infty$ and $\mc{S}$ is continuous. Furthermore, $\mc{M}$ is closed under the operation ``permute the coordinates (which are indexed by lattice points) with a permutation that is also a translation of the periodic lattice,'' since such permutations preserve $a(u)$ in \eqref{eqn:FLDdefau} and thus $\mu_\infty(u)$. Finally, $\mc{M}$ is convex, since $\mc{S}[u]$ is concave.

Given $u \in \mc{M}$, its images under all possible lattice translations are thus all in $\mc{M}$, so the average of all these points (which is in their convex hull) is in $\mc{M}$. Since the lattice is periodic (i.e., translations are in bijection with lattice sites), this average is on the diagonal.
\end{proof}

\begin{proof}[Proof of Theorem \ref{thm:variationalFLDcomplexity}]
Using Proposition \ref{prop:FLDrestricttodiagonal} to restrict the variational problem from Theorem \ref{thm:FLDcomplexity} to the diagonal, we have
\[
    \Sigma(\mu_0) = -\frac{1}{L^d} \log(\det(\mu_0 \Id_{L^d \times L^d} - t_0\Delta)) + \sup_{u \in \R} \left\{ \int_\R \log\abs{\lambda} \mu_\infty((u,\ldots,u),\lambda) \diff \lambda - \frac{u^2}{2J^2} \right\}
\]
and similarly for minima. One can check directly from the MDE that
\[
    \mu_\infty((u,\ldots,u),\lambda) = \mu_\infty((0,\ldots,0),\lambda-u),
\]
and in fact we have $\mu_\infty((0,\ldots,0)) = \rho_{\text{sc},J^2} \boxplus \hat{\mu}_{-t_0\Delta+\mu_0\Id}$. Indeed, by symmetry all the entries of $m_\infty(0,z)$ must be equal. If we denote by $m(z)$ their shared value (which is also the Stieltjes transform of $\mu_\infty((0,\ldots,0))$), then by taking the normalized trace in \eqref{eqn:kroneckermde_diagonal} we find that $m(z)$ satisfies the self-consistent equation
\[
    m(z) = \int \frac{\hat{\mu}_{-t_0\Delta + \mu_0\Id}(\diff s)}{s-z-J^2m(z)}.
\]
This \emph{Pastur relation} characterizes \cite{Pas1972} the Stieltjes transform $m(z)$ of $\rho_{\text{sc},J^2} \boxplus \hat{\mu}_{-t_0\Delta+\mu_0\Id}$. Exchanging $u$ and $-u$ gives \eqref{eqn:variationalFLDcomplexity}. 
\end{proof}

\begin{proof}[Proof of Theorem \ref{thm:betterFLDcomplexity}]
Since $-L^{-d}\log\det(\mu_0 - t_0\Delta) = -\int\log(\lambda) \hat{\mu}_{-t_0\Delta + \mu_0\Id}$, the variational problems given in \eqref{eqn:FLD_onedimensional} and \eqref{eqn:variationalFLDcomplexity} are exactly the variational problems analyzed for the soft spins model in \eqref{eqn:defsigmatot} and \eqref{eqn:defsigmamin}, identifying $\mu_D$ there with $\hat{\mu}_{-t_0\Delta + \mu_0\Id}$ here (which is gapped from zero since $\mu_0 > 0$) and $B''(0)$ there with $J^2$ here. The statement of Theorem \ref{thm:betterFLDcomplexity} follows from our analysis of that variational problem in the next section, since
\[
    \int_\R \frac{\hat{\mu}_{-t_0\Delta+\mu_0\Id}(\diff \lambda)}{\lambda^2} = \int_\R \frac{\hat{\mu}_{-t_0\Delta}(\diff \lambda)}{(\lambda+\mu_0)^2}
\]
is a strictly decreasing function of $\mu_0$, tending to $0$ as $\mu_0 \to +\infty$ and tending to $+\infty$ (since the Laplacian is singular) as $\mu_0 \downarrow 0$. This proves existence and uniqueness of the Larkin mass as claimed.
\end{proof}

\begin{rem}
Here we take $\Delta$ to be the lattice Laplacian, which is the classic choice in the elastic manifold, but as suggested in \cite{FyoLeD2020A} the same methods and proofs allow us to replace $\Delta$ everywhere with any symmetric negative semidefinite $L^d \times L^d$ matrix. For example, this allows for interactions beyond pairwise. 
\end{rem}


\section{Soft spins in an anisotropic well}


\subsection{Establishing the variational formula.}\
In this subsection we prove Theorem \ref{thm:asywell}, which establishes a variational formula for complexity. In the next subsection we analyze it to prove Theorem \ref{thm:softspins_threshold}.

The Kac-Rice formula \cite[Theorem 11.2.1]{AdlTay2007} gives
\begin{align}
\label{eqn:parabola_AT}
\begin{split}
    \E[\Crt_N^{\text{tot}}(\mc{H})] &= \int_{\R^N} \E[\abs{\det(\nabla^2 \mc{H}(\sigma))} \mid \nabla \mc{H}(\sigma) = 0] \phi_\sigma(0) \diff \sigma, \\
    \E[\Crt_N^{\text{min}}(\mc{H})] &= \int_{\R^N} \E[\abs{\det(\nabla^2 \mc{H}(\sigma))} \mathbf{1}_{\nabla^2 \mc{H}(\sigma) \geq 0} \mid \nabla \mc{H}(\sigma) = 0] \phi_\sigma(0) \diff \sigma,
\end{split}
\end{align}
where 
\[
    \phi_\sigma(0) = \frac{1}{(2\pi B'(0))^{N/2}} \exp\left(-\frac{1}{2B'(0)} \|D_N\sigma\|^2 \right)
\]
is the density of $\nabla \mc{H}(\sigma)$ at $0 \in \R^N$. (As stated, the Kac-Rice formula actually counts the mean number of critical points, not in all of $\R^N$, but in a compact subset $T$ of $\R^N$ satisfying some regularity assumptions; then the right-hand integrals in \eqref{eqn:parabola_AT} are over $T$ instead of $\R^N$. To obtain \eqref{eqn:parabola_AT} as written, we use this version of Kac-Rice for some nested sequence $(T_N)_{N=1}^\infty$ of compact sets whose union is $\R^N$ and apply monotone convergence on both sides.)

Since $V$ is isotropic, for each $\sigma$ we have that $(\nabla^2 V(\sigma),V(\sigma))$ is independent of $\nabla V(\sigma)$; hence for each $\sigma$ we also have that $(\nabla^2 \mc{H}(\sigma),\mc{H}(\sigma))$ is independent of $\nabla \mc{H}(\sigma)$. In fact, since $V$ is isotropic the distribution of $\nabla^2 V(\sigma)$ is independent of $\sigma$; and by computation
\[
    \nabla^2 \frac{1}{2}\ip{\sigma,D_N\sigma} = D_N
\]
is independent of $\sigma$ as well. Thus 
\begin{equation}
\label{eqn:parabolakacrice}
\begin{split}
    \E[\Crt_N^{\text{tot}}(\mc{H})] &= \int_{\R^N} \E[\abs{\det(\nabla^2 \mc{H}(\sigma))} \mid \nabla \mc{H}(\sigma) = 0] \phi_\sigma(0) \diff \sigma = \E\left[\abs{\det(\nabla^2 \mc{H}(\mathbf{0}))}\right] \int_{\R^N} \phi_\sigma(0) \diff \sigma \\
    &= \frac{1}{\det(D_N)} \E\left[\abs{\det(\nabla^2 \mc{H}(\mathbf{0}))}\right], \\
    \E[\Crt^{\min{}}_N(\mc{H})] &= \frac{1}{\det(D_N)} \E\left[ \abs{\det(\nabla^2 \mc{H}(\mathbf{0}))} \mathbf{1}_{\nabla^2 \mc{H}(\mathbf{0}) \geq 0} \right].
\end{split}
\end{equation}
Since the eigenvalues of $D_N$ are gapped away from zero and from infinity, uniformly in $N$, we have 
\[
    \lim_{N \to \infty} \frac{1}{N} \log \left(\frac{1}{\det(D_N)}\right) = -\int \log(\lambda) \mu_D(\diff \lambda).
\]
Thus it remains only to study the Hessian.

Classical Gaussian computations (e.g., \cite[Section 5.5]{AdlTay2007}) yield
\[
    \nabla^2 \mc{H}(\mathbf{0}) \overset{(d)}{=} W_N + \xi \Id + D_N,
\]
where $W_N$ is distributed according to $\sqrt{B''(0)}$ times the GOE and $\xi \sim \mc{N}(0,B''(0)/N)$ is independent of $W_N$. In fact, since the law of $W_N + \xi \Id$ is invariant under conjugation by orthogonal matrices, we can assume without loss of generality that $D_N$ is diagonal. If we define
\[
    A_N(u) = u \Id + D_N
\]
and $H_N(u) = W_N + A_N(u)$, then we have
\begin{align}
\label{eqn:parabolastandardform}
\begin{split}
    \E\left[\abs{\det(\nabla^2 \mc{H}(\mathbf{0}))}\right] &= \frac{1}{\sqrt{2\pi/N}} \int_\R e^{-N\frac{u^2}{2B''(0)}} \E[\abs{\det(H_N(u))}] \diff u, \\
    \E\left[\abs{\det(\nabla^2 \mc{H}(\mathbf{0}))} \mathbf{1}_{\nabla^2 \mc{H}(\mathbf{0}) \geq 0} \right] &= \frac{1}{\sqrt{2\pi/N}} \int_\R e^{-N\frac{u^2}{2B''(0)}} \E[\abs{\det(H_N(u))} \mathbf{1}_{H_N(u) \geq 0}] \diff u.
\end{split}
\end{align}

Now we study the relevant MDE. Given a linear operator $\mc{S} : \C^{N \times N} \to \C^{N \times N}$ that is self-adjoint with respect to the inner product $\ip{R,T} = \Tr(R^\ast T)$ and that preserves the cone of positive-semi-definite matrices, the problem
\begin{equation}
\label{eqn:parabola_genericmde}
    -M^{-1}(u,z) = z\Id - A_N(u) + \mc{S}[M(u,z)] \quad \text{subject to} \quad \im M(u,z) > 0
\end{equation}
has a unique solution $M(u,z) \in \C^{N \times N}$ for each $z \in \mathbb{H}$ and $u \in \R$. We will consider this problem with two choices of operator $\mc{S}$:
\begin{align*}
    \mc{S}_N[T] = \frac{B''(0)}{N}\Tr(T) + B''(0) \frac{T^{\text{tr}}}{N} \quad \text{induces} \quad M_N(u,z), \\
    \mc{S}'_N[T] = \frac{B''(0)}{N}\Tr(T) \quad \text{induces} \quad M'_N(u,z).
\end{align*}

Let $\mu_N(u)$ and $\mu'_N(u)$ be the probability measures whose Stieltjes transforms are, respectively, $\frac{1}{N}\Tr(M_N(u,z))$ and $\frac{1}{N}\Tr(M'_N(u,z))$. Recall the notation $\rho_{\text{sc},t}$ for the semicircle law of variance $t$.

\begin{lem}
We recognize
\[
    \mu'_N(u) = \rho_{\text{sc},B''(0)} \boxplus \hat{\mu}_{A_N(u)}.
\]
\end{lem}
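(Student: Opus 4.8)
The operator $\mc{S}'_N[T] = \frac{B''(0)}{N}\Tr(T)\,\Id$ takes values in the scalar multiples of the identity, so the MDE \eqref{eqn:parabola_genericmde} with $\mc{S} = \mc{S}'_N$ becomes an equation whose right-hand side is a deterministic matrix plus a scalar correction. First I would write $m'_N(u,z) = \frac{1}{N}\Tr(M'_N(u,z))$ and substitute $\mc{S}'_N[M'_N(u,z)] = B''(0)\,m'_N(u,z)\,\Id$ into \eqref{eqn:parabola_genericmde}, obtaining
\[
    M'_N(u,z) = \bigl(A_N(u) - (z + B''(0)\,m'_N(u,z))\Id\bigr)^{-1}.
\]
Taking the normalized trace of both sides then gives the scalar self-consistent (Pastur) equation
\[
    m'_N(u,z) = \int_\R \frac{\hat{\mu}_{A_N(u)}(\diff s)}{s - z - B''(0)\,m'_N(u,z)}, \qquad \im m'_N(u,z) > 0 \text{ for } z \in \mathbb{H}.
\]

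\textbf{Identification with the free convolution.} This is exactly the self-consistent equation (with subordination function $\zeta(z) = z + B''(0)\,m'_N(u,z)$) that characterizes the Stieltjes transform of $\rho_{\text{sc},B''(0)} \boxplus \hat{\mu}_{A_N(u)}$, as used already in the proof of Theorem \ref{thm:variationalFLDcomplexity} (the ``Pastur relation,'' \cite{Pas1972}), with $J^2$ there replaced by $B''(0)$ here and $\hat{\mu}_{-t_0\Delta+\mu_0\Id}$ there replaced by $\hat{\mu}_{A_N(u)}$ here. Since the deterministic matrix $A_N(u) = u\Id + D_N$ has bounded spectrum (the eigenvalues of $D_N$ being gapped away from zero and infinity), $\hat{\mu}_{A_N(u)}$ is a fixed compactly supported measure and its free convolution with the semicircle law is well-defined. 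I would then conclude that $m'_N(u,z)$ equals the Stieltjes transform of $\rho_{\text{sc},B''(0)} \boxplus \hat{\mu}_{A_N(u)}$, hence that $\mu'_N(u) = \rho_{\text{sc},B''(0)} \boxplus \hat{\mu}_{A_N(u)}$.

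\textbf{Main point to check.} There is essentially no hard analysis here: the only thing requiring care is uniqueness, i.e.\ that the normalization $\im M'_N(u,z) > 0$ in the MDE forces $m'_N(u,z)$ to be \emph{the} solution of the Pastur equation that maps $\mathbb{H} \to \mathbb{H}$, which is the same branch/sign condition that pins down the Stieltjes transform of the free convolution. This follows from the existence-and-uniqueness statement for \eqref{eqn:parabola_genericmde} quoted from \cite{HelFarSpe2007} together with the classical characterization of $\boxplus$ with semicircle via subordination; one simply verifies that the Stieltjes transform of $\rho_{\text{sc},B''(0)} \boxplus \hat{\mu}_{A_N(u)}$, plugged back into the matrix relation above, produces a matrix with positive imaginary part, so by uniqueness it must coincide with $M'_N(u,z)$.
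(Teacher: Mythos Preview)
Your proposal is correct and follows essentially the same route as the paper: both reduce the MDE with $\mc{S}'_N$ to the scalar Pastur self-consistent equation for $m'_N(u,z)$ and invoke uniqueness (via the positive-imaginary-part normalization) to identify $\mu'_N(u)$ with $\rho_{\text{sc},B''(0)} \boxplus \hat{\mu}_{A_N(u)}$. The only cosmetic difference is direction---the paper starts from the free-convolution Stieltjes transform and checks it solves the MDE, whereas you start from the MDE solution and derive the Pastur relation---but the content is the same.
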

\begin{proof}
Write $m'_N(u,z)$ for the Stieltjes transform of $\rho_{\text{sc},B''(0)} \boxplus \hat{\mu}_{A_N(u)}$. The Pastur relation \cite{Pas1972}, which characterizes the Stieltjes transform of the free convolution of the semicircle law with another measure,  states that $m'_N(u,z)$ satisfies the self-consistent equation
\[
    m'_N(u,z) = \int \frac{(\hat{\mu}_{D_N + u \Id})(\diff \lambda)}{\lambda-z-B''(0)m'_N(u,z)} = \frac{1}{N} \sum_{i=1}^N \frac{1}{(D_N)_{ii} + u - z - B''(0)m'_N(u,z)}.
\]
(Recall we changed variables so that $D_N$ is diagonal.) If we define
\[
    \mc{M}'_N(u,z) = \diag\left( \frac{1}{(D_N)_{11} + u - z - B''(0)m'_N(u,z)}, \ldots, \frac{1}{(D_N)_{NN} + u - z - B''(0)m'_N(u,z)}\right),
\]
this Pastur relation then gives $m'_N(u,z) = \frac{1}{N}\Tr(\mc{M}'_N(u,z))$, which means that $\mc{M}'_N(u,z)$ exhibits a solution to the MDE \eqref{eqn:parabola_genericmde} with $\mc{S} = \mc{S}'_N$. (Since $\im m'_N(u,z) > 0$ when $z \in \mathbb{H}$, one can check that $\im \mc{M}'_N(u,z) > 0$.) Thus $m'_N(u,z)$, which we defined as the Stieltjes transform of $\rho_{\text{sc},B''(0)} \boxplus \hat{\mu}_{A_N(u)}$, is also the Stieltjes transform of $\mu'_N(u)$. 
\end{proof}

Let $\tau_u$ be the translation $\tau_u(x) = x+u$, and write $(\tau_u)_\ast \mu$ for the pushforward of a probability measure $\mu$ under $\tau_u$ (i.e., the translation of $\mu$ by $u$).
\begin{lem}
\label{lem:parabola_regular}
The measures $\mu'_N(u)$ and
\[
    \mu_\infty(u) = \rho_{\text{sc},B''(0)} \boxplus ((\tau_u)_\ast \mu_D)
\]
admit bounded and compactly supported densities on $\R$, locally uniformly in $u$ (meaning the bound and the compact set can be taken uniform on compact sets of $u$).
\end{lem}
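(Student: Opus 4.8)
The plan is to observe that both $\mu'_N(u)$ and $\mu_\infty(u)$ are free convolutions of a compactly supported probability measure with the semicircle law $\rho_{\text{sc},B''(0)}$, whose variance $B''(0)=\int t^4\nu(\diff t)$ is strictly positive by the nondegeneracy hypothesis, and then to invoke two standard regularizing properties of free convolution with a (nondegenerate) semicircle. For the density bound I would use the subordination relation coming from the Pastur equation (already used in the preceding lemma): writing $m(z)$ for the Stieltjes transform of $\nu\boxplus\rho_{\text{sc},t}$ and $\omega(z)=z+t\,m(z)$ for the subordination function, one has $m(z)=\int(\lambda-\omega(z))^{-1}\nu(\diff\lambda)$, hence $\im m(z)=\im\omega(z)\int|\lambda-\omega(z)|^{-2}\nu(\diff\lambda)\le 1/\im\omega(z)$ since $|\lambda-\omega(z)|^2\ge(\im\omega(z))^2$. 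Combined with $\im\omega(z)=\im z+t\im m(z)\ge t\im m(z)$, this gives $\im m(z)\le (t\,\im m(z))^{-1}$, i.e.\ $\im m(z)\le t^{-1/2}$ for all $z\in\mathbb H$; by Stieltjes inversion $\nu\boxplus\rho_{\text{sc},t}$ is then absolutely continuous with density bounded by $\frac{1}{\pi\sqrt t}$ (a classical, sharp bound, attained at $\nu=\delta_0$). Applying this with $t=B''(0)$ yields the bound $\frac{1}{\pi\sqrt{B''(0)}}$ on the densities of $\mu'_N(u)$ and $\mu_\infty(u)$, uniformly in $N$ and in $u$.

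For the support, I would use that $\nu\boxplus\rho_{\text{sc},t}$ is the spectral distribution of $a+s$ with $a,s$ free, $a$ distributed as $\nu$, and $s$ semicircular of variance $t$ (so $\|s\|=2\sqrt t$): if $\lambda_0$ lies at distance more than $2\sqrt t$ from $\supp(\nu)$, then $a-\lambda_0$ is invertible with $\|(a-\lambda_0)^{-1}\|<(2\sqrt t)^{-1}$, so $a+s-\lambda_0=(a-\lambda_0)(1+(a-\lambda_0)^{-1}s)$ is invertible and $\lambda_0\notin\supp(\nu\boxplus\rho_{\text{sc},t})$; thus $\supp(\nu\boxplus\rho_{\text{sc},t})\subseteq\supp(\nu)+[-2\sqrt t,2\sqrt t]$ (equivalently, this follows from $\nu\boxplus\rho_{\text{sc},t}=\lim_N\hat\mu_{B_N+\sqrt t\,G_N}$ together with $\|G_N\|\to 2$ a.s.). Now $\supp(\hat\mu_{A_N(u)})=\{u+(D_N)_{ii}\}_i\subseteq[u+\epsilon,\,u+\epsilon^{-1}]$ by the uniform spectral-gap hypothesis on $D_N$, and $\supp((\tau_u)_\ast\mu_D)=u+\supp(\mu_D)\subseteq[u+\mathtt{l}(\mu_D),\,u+\mathtt{r}(\mu_D)]$. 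Combining with the previous display gives, uniformly in $N$, that $\mu'_N(u)$ and $\mu_\infty(u)$ are supported in intervals of fixed length that translate rigidly with $u$, hence are contained in a single compact set as $u$ ranges over any compact set.

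I do not expect a genuine obstacle: the lemma reduces entirely to the two quoted facts about free convolution with a semicircle, both classical, together with the explicit support windows of $A_N(u)$ and $(\tau_u)_\ast\mu_D$. The only point requiring minor care is making the estimates genuinely uniform in $N$ and locally uniform in $u$, but this is automatic, since the density bound $\frac{1}{\pi\sqrt{B''(0)}}$ depends on neither $N$ nor $u$, while the support windows stay within an $[\epsilon,\epsilon^{-1}]$-width band around $u$ by hypothesis on $D_N$ (and are independent of $N$ for $\mu_\infty$).
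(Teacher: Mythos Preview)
Your proof is correct and follows the same strategy as the paper: both reduce the lemma to standard regularity properties of free convolution with a nondegenerate semicircle, together with the explicit support windows for $\hat{\mu}_{A_N(u)}$ and $(\tau_u)_\ast\mu_D$.

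The one difference worth noting is in the density bound. You derive the sharp estimate $\|\rho_{\text{sc},t}\boxplus\nu\|_{L^\infty}\le \tfrac{1}{\pi\sqrt t}$ directly from the Pastur/subordination relation (this is Biane's Proposition~2/Lemma~3, which the paper in fact quotes later as \eqref{eqn:Bianeuniformbound}). The paper instead cites Biane's cube-root bound \cite[Corollaries 2, 5]{Bia1997}, which depends on the width $\mathtt{r}(\nu)-\mathtt{l}(\nu)$ of the input support. Your bound is sharper and has the advantage of being completely uniform in $\nu$ (hence in $N$ and $u$) with no need to track support widths; the paper's bound is weaker but still sufficient since those widths are uniformly controlled. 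For the support containment $\supp(\rho_{\text{sc},t}\boxplus\nu)\subseteq\supp(\nu)+[-2\sqrt t,2\sqrt t]$ the two proofs coincide.
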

\begin{proof}
These are standard consequences of the regularity of free convolution with the semicircle law, studied in depth by \cite{Bia1997}. For a compactly supported measure $\mu$ and $t > 0$, we have \cite[Corollaries 2, 5]{Bia1997} that $\rho_{\text{sc},t} \boxplus \mu$ admits a density $(\rho_{\text{sc},t} \boxplus \mu)(\cdot)$ with 
\[
    (\rho_{\text{sc},t} \boxplus \mu)(x) \leq \left(\frac{3}{4\pi^3t^2}(4+\abs{\mathtt{r}(\mu) - \mathtt{l}(\mu)})\right)^{1/3} \mathbf{1}_{\mathtt{l}(\mu) - 2\sqrt{t} \leq x \leq \mathtt{r}(\mu)+2\sqrt{t}}.
\]
To study $\mu'_N(u)$, we apply this with $\mu = \hat{\mu}_{A_N(u)}$. Since $\mathtt{r}(\hat{\mu}_{A_N(u)}) \leq u + \sup_N\lambda_{\max{}}(D_N)$ and $\mathtt{l}(\hat{\mu}_{A_N(u)}) \geq u$, both of which are uniformly bounded over $u \in B_R$, we obtain the claim for $\mu'_N(u)$. The proof for $\mu_\infty(u)$ is similar.
\end{proof}

\begin{prop}
\label{prop:parabolalaplace}
We have
\begin{align}
\label{eqn:asy_tot}
&\frac{1}{N} \log \int_\R e^{-\frac{Nu^2}{2B''(0)}} \E[\abs{\det(H_N(u))}] \diff u \underset{N\to\infty}{\longrightarrow} \sup_{u \in \R} \left\{ \int_\R \log\abs{\cdot-u} \rd(\rho_{\text{sc},B''(0)} \boxplus \mu_D)- \frac{u^2}{2B''(0)} \right\},\\
\label{eqn:asy_min}
 &\frac{1}{N} \log \int_\R e^{-\frac{Nu^2}{2B''(0)}} \E[\abs{\det(H_N(u))}\mathbf{1}_{H_N(u) \geq 0}] \diff u \underset{N\to\infty}{\longrightarrow} \sup_{u \leq \mathtt{l}(\rho_{\text{sc},B''(0)} \boxplus \mu_D)} \left\{ \int_\R \log\abs{\cdot-u} \rd(\rho_{\text{sc},B''(0)} \boxplus \mu_D)- \frac{u^2}{2B''(0)} \right\}.
\end{align}
\end{prop}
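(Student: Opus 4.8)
The plan is to recognize both left-hand sides as Laplace-type integrals of the form $\frac1N\log\int_\R e^{-N\alpha u^2}\,\E[\abs{\det H_N(u)}](\cdots)\,\rd u$ with $\alpha=\tfrac1{2B''(0)}$, and to invoke the two Laplace-method theorems of the companion paper: \cite[Theorem 4.1]{BenBouMcK2021I} (with $p=0$, $\mf D=\R$) for \eqref{eqn:asy_tot}, and \cite[Theorem 4.5]{BenBouMcK2021I} (with $p=0$, $\mf D=\R$) for \eqref{eqn:asy_min}. This is the exact analogue of the Proposition proved for the elastic manifold around \eqref{eqn:FLD_detcon}--\eqref{eqn:FLD_detconmin}; the soft-spins model is in fact simpler, since $H_N(u)=W_N+A_N(u)$ is a full-rank deformation of a GOE matrix by the deterministic $A_N(u)=u\Id+D_N$, and $\hat\mu_{A_N(u)}=(\tau_u)_\ast\hat\mu_{D_N}$ is merely a translate of $\hat\mu_{D_N}$. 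So essentially all the work is to verify the hypotheses of those two theorems, each of which has already been reduced (for us) to ingredients in place above.

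\textbf{For \eqref{eqn:asy_tot}} I would check: (i) the determinant asymptotics $\E[\abs{\det H_N(u)}]=\exp(N\int\log\abs\lambda\,\mu_N(u,\rd\lambda)+o(N))$, locally uniformly in $u$, from the single-matrix result of \cite{BenBouMcK2021I} applied to the deformed GOE $H_N(u)$, whose local law is the one already used in Lemma~\ref{lem:parabola_regular}, and for which uniform boundedness of $\mu_N(u,\cdot)$ on compacts in $u$ comes from Lemma~\ref{lem:parabola_regular} together with the MDE-stability comparison below; (ii) the identification $\mu_N(u)\to\mu_\infty(u)=\rho_{\mathrm{sc},B''(0)}\boxplus((\tau_u)_\ast\mu_D)$ at polynomial speed, locally uniformly in $u$ --- the lemma just proved gives $\mu'_N(u)=\rho_{\mathrm{sc},B''(0)}\boxplus\hat\mu_{A_N(u)}$ exactly, and since $\hat\mu_{A_N(u)}=(\tau_u)_\ast\hat\mu_{D_N}$ with $d_{\textup{BL}}(\hat\mu_{D_N},\mu_D)\le N^{-\e}$ by \eqref{eqn:speed_of_environment}, continuity of $\boxplus$ with a fixed semicircle (through the Pastur subordination relation, or \cite{Bia1997}) gives $\mu'_N(u)\to\mu_\infty(u)$ at a polynomial rate, uniformly on compact $u$-sets (translation being a $d_{\textup{BL}}$-isometry), and Lemma~\ref{lem:generic_distance_stability} --- applied with $P_N\equiv N$ and the operator $\mc S'_N$ of the present section, so that $\mc S_N-\mc S'_N=B''(0)(\cdot)^{\text{tr}}/N$ has operator norm $O(1/N)$ --- upgrades this to $\mu_N(u)\to\mu_\infty(u)$, exactly as in Lemma~\ref{lem:fld_reference_muinfty}; (iii) the sublinear determinant bound $\E[\abs{\det H_N(u)}]\le(C\max(\|u\|,1))^N$, from $\abs{\det H_N(u)}\le\|H_N(u)\|^N\le(\|W_N\|+\|u\Id+D_N\|)^N$, the exponential tail $\P(\|W_N\|\ge t)\le e^{-cN\max(0,t-C)}$, and $\sup_N\|D_N\|\le1/\e$, just as in Lemma~\ref{lem:fld_detsublinear}. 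Feeding (i)--(iii) into \cite[Theorem 4.1]{BenBouMcK2021I} and using translation covariance of free convolution, namely $\int\log\abs\lambda\,\mu_\infty(u,\rd\lambda)=\int\log\abs{\lambda+u}\,\rd(\rho_{\mathrm{sc},B''(0)}\boxplus\mu_D)$, gives \eqref{eqn:asy_tot} after the substitution $u\mapsto-u$ (the Gaussian weight being even).

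\textbf{For \eqref{eqn:asy_min}} I would additionally supply, as \cite[Theorem 4.5]{BenBouMcK2021I} requires: (a) concentration of $\hat\mu_{H_N(u)}$ about $\mu_\infty(u)$ on the $N\log N$ scale, uniformly for $u\in B_R$, from the uniform log-Sobolev inequality for the (Gaussian) entries of $\sqrt N H_N(u)$ and \cite{GuiZei2000} ($u$ affecting only the mean), exactly as in Lemma~\ref{lem:fld_NlogN}; (b) absence of outliers: $\Spec(H_N(u))\subset[\mathtt{l}(\mu_\infty(u))-\e,\mathtt{r}(\mu_\infty(u))+\e]$ with high probability, with the extreme eigenvalues converging to the edges --- this is precisely where the hypothesis $\lambda_{\min}(D_N)\to\mathtt{l}(\mu_D)$, $\lambda_{\max}(D_N)\to\mathtt{r}(\mu_D)$ enters, since $H_N(u)$ is GOE plus the deterministic $A_N(u)$, whose spectrum has no outlier exactly when that of $D_N$ does, so that the edge of $H_N(u)$, located at $\mathtt{l}(\mu_\infty(u))=u+\mathtt{l}(\rho_{\mathrm{sc},B''(0)}\boxplus\mu_D)$ by \cite{Bia1997}, is not perturbed, via the deformed-Wigner local law of \cite{AltErdKruNem2019}; (c) the topology of $\mc G=\{u:\mu_\infty(u)((-\infty,0))=0\}$: from $H_N(tu+(1-t)v)=tH_N(u)+(1-t)H_N(v)$ one gets concavity of $u\mapsto\lambda_{\min}(H_N(u))$, hence (using (b)) concavity of $u\mapsto\mathtt{l}(\mu_\infty(u))$ and convexity of each $\mc G_{+\e}$ of \cite[(4.5)]{BenBouMcK2021I}, while $\mathtt{l}(\mu_\infty(u))=u+\mathtt{l}(\rho_{\mathrm{sc},B''(0)}\boxplus\mu_D)\to+\infty$ as $u\to+\infty$ shows $\mc G_{+1}$ has positive measure and $\overline{\bigcup_{\e>0}\mc G_{+\e}}=\mc G$, as in Lemma~\ref{lem:FLD_topology}. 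Then \cite[Theorem 4.5]{BenBouMcK2021I} applies, and after $u\mapsto-u$ (under which $\mc G=\{u\ge-\mathtt{l}(\rho_{\mathrm{sc},B''(0)}\boxplus\mu_D)\}$ becomes $\{u\le\mathtt{l}(\rho_{\mathrm{sc},B''(0)}\boxplus\mu_D)\}$) this is \eqref{eqn:asy_min}.

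The hard part is not any single step but making all these convergences locally uniform in $u$ with the quantitative polynomial-in-$N$ rates that the companion's Laplace theorems demand; among the genuinely model-specific inputs, the most delicate is item (b), namely ruling out a spectral outlier of $H_N(u)$ created by the full-rank deterministic shift $A_N(u)$ --- this is exactly the no-outlier hypothesis on $D_N$ propagated through the edge analysis for deformations of GOE --- and, together with the convexity-and-closure structure of $\mc G$, it is the part that uses more than the general machinery and the free-probabilistic regularity of $\rho_{\mathrm{sc}}\boxplus(\cdot)$.
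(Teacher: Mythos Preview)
Your proposal is correct and follows essentially the same route as the paper: apply \cite[Theorem 4.1]{BenBouMcK2021I} for \eqref{eqn:asy_tot} and \cite[Theorem 4.5]{BenBouMcK2021I} for \eqref{eqn:asy_min}, verifying the hypotheses via Lemma~\ref{lem:generic_distance_stability} (to pass from $\mu_N$ to $\mu'_N$), free-probability continuity (to pass from $\mu'_N$ to $\mu_\infty$), the sublinear determinant bound, log-Sobolev concentration, no-outlier control, and the structure of $\mc G$. Two small points where the paper is slightly more direct than your outline: for the no-outlier input (b) the paper observes that $\P(\Spec(H_N(u))\subset[\mathtt{l}(\mu_\infty(u))-\epsilon,\mathtt{r}(\mu_\infty(u))+\epsilon])$ is actually independent of $u$ (everything is a translate by $u$) and then cites the large-deviations bound \cite[(2.5)]{McK2019} for GOE plus a deterministic matrix without outliers, rather than invoking the full deformed-Wigner local law; and for (c) the paper simply computes $\mc G=\{u:u\geq -\mathtt{l}(\rho_{\text{sc},B''(0)}\boxplus\mu_D)\}$ and $\mc G_{+\epsilon}=\{u:u\geq 2\epsilon-\mathtt{l}(\rho_{\text{sc},B''(0)}\boxplus\mu_D)\}$ as explicit half-lines, making convexity, positive measure, and $\overline{\bigcup_{\epsilon>0}\mc G_{+\epsilon}}=\mc G$ immediate, so the concavity-of-$\lambda_{\min}$ argument you sketch (imported from Lemma~\ref{lem:FLD_topology}) is not needed here.
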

\begin{proof}
For \eqref{eqn:asy_tot}, we wish to apply \cite[Theorem 4.1]{BenBouMcK2021I} with $\alpha = \frac{1}{2B''(0)}$, $p = 0$, $\mf{D} = \R$, and $\mu_\infty(u)$ as above. To do this, we will consider $H_N(u)$ as a sequence of ``Gaussian matrices with a (co)variance profile,'' in the language of \cite[Corollary 1.8.A]{BenBouMcK2021I}. So we verify the assumptions of that corollary. 

By assumption we have $\sup_N \lambda_{\max{}}(D_N) < \infty$; thus
\[
    \sup_N\|A_N(u)\| \leq \abs{u} + \sup_N \lambda_{\max{}}(D_N) < \infty.
\]
Since $W_N$ is $\sqrt{B''(0)}$ times a GOE matrix, the proof of Lemma \ref{lem:fld_detsublinear} gives us $\E[\abs{\det(H_N(u))}] \leq (C\max(\|u\|,1))^N$ for some $C$. For the same reason, we can compute directly
\[
    \E[W_NTW_N] = \frac{B''(0)}{N}\Tr(T)\Id + \frac{B''(0)}{N}T
\]
which verifies the flatness condition. Since everything is locally uniform in $u$, it remains only to show 
\begin{equation}
\label{eqn:softspins_wasserstein}
    {\rm W}_1(\mu_N(u),\mu_\infty(u)) \leq N^{-\kappa}
\end{equation}
for some $\kappa > 0$. Since all of these measures are compactly supported, locally uniformly in $u$, the Wasserstein-$1$ and bounded-Lipschitz distances are equivalent, so we will work with $d_{\textup{BL}}$. 

First we relate $\mu_N$ to $\mu'_N$, using Lemma \ref{lem:generic_distance_stability} (with $P_N = N$) to estimate the difference between their Stieltjes transforms and then following the proof of \cite[Proposition 3.1]{BenBouMcK2021I}, using the regularity we established in Lemma \ref{lem:parabola_regular}. To relate $\mu'_N$ and $\mu_\infty$, we recall the notation $d_{\textup{L}}$ for the L\'{e}vy distance between probability measures, then combine the translation-invariance of bounded-Lipschitz distance, \cite[Corollary 11.6.5, Theorem 11.3.3]{Dud2002}, and \cite[Proposition 4.13]{BerVoi1993} to obtain
\begin{align*}
    d_{\textup{BL}}(\mu'_N(u), \mu_\infty(u)) &= d_{\textup{BL}}(\rho_{\text{sc},B''(0)} \boxplus \hat{\mu}_{A_N(u)}, \rho_{\text{sc},B''(0)} \boxplus ((\tau_u)_\ast \mu_D)) \\
    &= d_{\textup{BL}}((\tau_u)_\ast (\rho_{\text{sc},B''(0)} \boxplus \hat{\mu}_{D_N}), (\tau_u)_\ast (\rho_{\text{sc},B''(0)} \boxplus \mu_D)) \\
    &= d_{\textup{BL}}(\rho_{\text{sc},B''(0)} \boxplus \hat{\mu}_{D_N}, \rho_{\text{sc},B''(0)} \boxplus \mu_D) \leq 2d_{\textup{L}}(\rho_{\text{sc},B''(0)} \boxplus \hat{\mu}_{D_N}, \rho_{\text{sc},B''(0)} \boxplus \mu_D) \\
    &\leq 2d_{\textup{L}}(\hat{\mu}_{D_N}, \mu_D) \leq 4\sqrt{d_{\textup{BL}}(\hat{\mu}_{D_N}, \mu_D)} \leq N^{-\epsilon},
\end{align*}
uniformly over $u \in \R$, where the last inequality is by assumption \eqref{eqn:speed_of_environment}. This verifies \eqref{eqn:softspins_wasserstein}, and thus \cite[Theorem 4.1]{BenBouMcK2021I} yields
\[
    \lim_{N \to \infty} \frac{1}{N} \log \int_\R e^{-N\frac{u^2}{2B''(0)}} \E[\abs{\det(H_N(u))}] \diff u = \sup_{u \in \R} \left\{ \int_\R \log\abs{\lambda} \mu_\infty(u, \lambda) \diff \lambda - \frac{u^2}{2B''(0)} \right\}.
\]
To obtain \eqref{eqn:asy_tot}, we notice that 
\begin{equation}
\label{eqn:asy_recognize_rhoinfty}
    \mu_\infty(u, \lambda) = (\rho_{\text{sc},B''(0)} \boxplus ((\tau_u)_\ast \mu_D))(\lambda) = ((\tau_u)_\ast(\rho_{\text{sc},B''(0)} \boxplus \mu_D))(\lambda) = (\rho_{\text{sc},B''(0)} \boxplus \mu_D)(\lambda - u)
\end{equation}
and change variables twice (exchanging $u$ and $-u$). This completes the proof of \eqref{eqn:asy_tot}.

For \eqref{eqn:asy_min}, we wish to apply \cite[Theorem 4.5]{BenBouMcK2021I} with $\alpha = \frac{1}{2B''(0)}$, $p = 0$, $\mf{D} = \R$, and $\mu_\infty(u)$ as above. Now we verify its conditions. Arguments as in the elastic-manifold case, specifically Lemma \ref{lem:fld_NlogN}, give \cite[(4.6)]{BenBouMcK2021I}. By \eqref{eqn:asy_recognize_rhoinfty}, $\P(\Spec(H_N(u)) \subset [\mathtt{l}(\mu_\infty(u))-\epsilon, \mathtt{r}(\mu_\infty(u)) + \epsilon])$ is actually independent of $u$, and when $u = 0$ it takes the form
\[
    \P(\Spec(W_N + D_N) \subset [\mathtt{l}(\rho_{\text{sc},B''(0)} \boxplus \mu_D) - \epsilon, \mathtt{r}(\rho_{\text{sc},B''(0)} \boxplus \mu_D) + \epsilon]).
\]
Estimates showing that this tends to one are classical, since $W_N$ is $\sqrt{B''(0)}$ times a GOE matrix and $D_N$ has no outliers by assumption (recall that we made this assumption only for counting local minima, not for counting total critical points). In the generality we need (i.e., with the fewest assumptions on $D_N$), this estimate follows from the large-deviations result \cite[(2.5)]{McK2019} (written for GOE, not $\sqrt{B''(0)}$ times GOE, but clearly goes through in this generality); this verifies \cite[(4.7)]{BenBouMcK2021I}. Finally, the topological requirement \cite[(4.8)]{BenBouMcK2021I} follows immediately after noticing that (in the notation there)
\begin{align*}
    \mc{G} &= \{u : \mu_\infty(u)((-\infty,0)) = 0\} = \{u : u \geq -\mathtt{l}(\rho_{\text{sc},B''(0)} \boxplus \mu_D)\}, \\
    \mc{G}_{+\epsilon} &= \{u : \mathtt{l}(\mu_\infty(u)) \geq 2\epsilon\} = \{u : u \geq 2\epsilon - \mathtt{l}(\rho_{\text{sc},B''(0)} \boxplus \mu_D)\}.
\end{align*}
Having checked all the conditions, we can apply \cite[Theorem 4.5]{BenBouMcK2021I} to complete the proof.
\end{proof}

\begin{proof}[Proof of Theorem \ref{thm:asywell}]
This follows immediately from \eqref{eqn:parabolakacrice}, \eqref{eqn:parabolastandardform}, and Proposition \ref{prop:parabolalaplace}.
\end{proof}


\subsection{Analyzing the variational formula.}
\label{subsec:softspins_variational_analysis}\ 
The key idea presented here is a dynamical analysis of the variational formulas appearing in the previous section,  increasing the noise parameter $B''(0)$.  Important ingredients are the Burgers' equation \eqref{eqn:freeHeatBM} and the square root edge behavior of the relevant free convolutions, as proved in the Appendix.

\begin{proof}[Proof of Theorem \ref{thm:softspins_threshold}]
In this proof, we state several claims as lemmas, postponing their proofs.

We think of the variational problem as dynamic in the parameter $t$, which corresponds to the noise parameter $B''(0)$ in the complexity problem, for fixed $\mu_D$. That is, at ``time $0$'' we have a pure signal with zero complexity, and as ``time'' (meaning noise) increases we find a threshold at which complexity becomes positive. Precisely, write 
\begin{align*}
    \mu_t &= \rho_{\text{sc},t} \boxplus \mu_D, \\
    \ell_t &= \mathtt{l}(\mu_t), \\
    r_t &= \mathtt{r}(\mu_t),
\end{align*}
for the free convolution of $\mu_D$ with the semi-circular distribution of variance $t$ (which has density $\mu_t(\cdot)$) and its left and right edges, respectively. Let 
\[
    F(u,t) = -\int_\R \log(\lambda) \mu_D(\diff \lambda) + \int_\R \log\abs{\lambda - u} \mu_t(\lambda) \diff \lambda - \frac{u^2}{2t}
\]
and recall that we are interested in 
\begin{align*}
    \Sigma^{\text{tot}}(\mu_D,t) &= \sup_{u \in \R} F(u,t), \\
    \Sigma^{\text{min}}(\mu_D,t) &= \sup_{u \leq \ell_t} F(u,t).
\end{align*}
Let
\begin{equation}
\label{eqn:defut}
    u_t = -t\int \frac{\mu_D(\diff \lambda)}{\lambda},
\end{equation}
and consider the thresholds
\begin{align*}
    t_0 &= \inf\{t > 0 : u_t = \ell_t\}, \\
    t_c &= \left(\int \frac{\mu_D(\diff \lambda)}{\lambda^2} \right)^{-1}.
\end{align*}
Later we will show that $t_0 = t_c$, but for now we distinguish between them. In particular we do not yet assume that $t_0$ is finite. Since $\mu_D$ is supported in $(0,\infty)$, we have $u_0 = 0 < \ell_0$, and by continuity we have $u_t < \ell_t$ for all $t < t_0$.

Let $m_t$ be the Stieltjes transform of $\mu_t$, with the sign convention $m_t(z) = \int_\R \frac{\mu_t(\diff \lambda)}{\lambda - z}$. It is known (see for example \cite{Voi1986, Bia1997}, noting their opposite sign convention $m_t(z) = \int_\R \frac{\cdots}{z-\lambda}$) that for any $z$ outside the support of $\mu_t$, we have
\begin{equation}\label{eqn:freeHeatBM}
\partial_t m_t(z)-m_t(z)\partial_z m_t(z)=0.
\end{equation}
For $t < t_0$,  $u_t$ is not in the support of $\mu_t$, so \eqref{eqn:freeHeatBM} gives
$$
\frac{\rd}{\rd t}m_t(u_t)=\partial_u m_t(u)\partial_t u_t+\partial_t m_t(u)=\partial_u m_t(u)(\partial_t u_t+ m_t(u_t))=\partial_u m_t(u)(-m_0(u_0) + m_t(u_t)).
$$
The (unique) solution to this differential equation is clearly
$
m_t(u_t)=m_0(u_0),
$
so that 
\begin{equation}\label{eqn:stieltjesut}
    -m_t(u_t) - \frac{u_t}{t} = 0,
\end{equation}
i.e.
\begin{equation}\label{eqn:diffOptimumnew}
    \left(\frac{\partial}{\partial u} F(u,t) \right)_{u = u_t} = 0
\end{equation}
for $t < t_0$.

\begin{lem}
\label{lem:difflogpotential}
For any $u \in \R$, we have
\[
    \frac{\diff}{\diff t} \int_\R \log\abs{\lambda - u} \mu_t(\diff \lambda) = \frac{\im(m_t(u))^2 - \re(m_t(u))^2}{2}.
\]
\end{lem}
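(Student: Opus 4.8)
The plan is to differentiate under the integral sign in $t$ and use the Burgers' equation \eqref{eqn:freeHeatBM} to rewrite the $t$-derivative of $\mu_t$ in terms of spatial derivatives, then integrate by parts. Concretely, write $L(u,t) = \int_\R \log|\lambda - u|\,\mu_t(\diff\lambda)$, which is (up to the constant $-\int\log\lambda\,\mu_D(\diff\lambda)$) the logarithmic potential appearing in $F(u,t)$. First I would express $L(u,t)$ through the Stieltjes transform: since $\partial_u \log|\lambda - u| = \frac{1}{u - \lambda} = -\frac{1}{\lambda - u}$, we have $\partial_u L(u,t) = -\re m_t(u + \ii 0^+)$ for $u$ outside $\supp\mu_t$ (and more care is needed inside the support, but Lemma~\ref{lem:parabola_regular} gives a bounded density, so $L$ is $C^1$ in $u$ and the boundary value $m_t(u+\ii 0^+)$ exists for a.e.\ $u$). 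The cleanest route is to work with the complexified potential $\Phi_t(z) = \int_\R \log(z - \lambda)\,\mu_t(\diff\lambda)$, holomorphic on $\mathbb{C}\setminus\R$, whose derivative is $\Phi_t'(z) = \int \frac{\mu_t(\diff\lambda)}{z - \lambda} = -m_t(z)$ in the sign convention of the lemma; note $L(u,t) = \re\Phi_t(u + \ii 0^+)$.

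Next, I would compute $\partial_t \Phi_t(z)$. Differentiating under the integral and using that $\partial_t m_t(z) = m_t(z)\partial_z m_t(z) = \tfrac12 \partial_z (m_t(z)^2)$ from \eqref{eqn:freeHeatBM}, together with $\Phi_t'(z) = -m_t(z)$, gives $\partial_t \Phi_t'(z) = -\partial_t m_t(z) = -\tfrac12\partial_z(m_t(z)^2)$, hence $\partial_t \Phi_t(z) = -\tfrac12 m_t(z)^2 + C(t)$ for some function $C(t)$ independent of $z$; sending $z \to \infty$ along the imaginary axis and using $\Phi_t(z) \sim \log z$ and $m_t(z) \to 0$ shows $\partial_t\Phi_t(z) \to 0$, so $C(t) = 0$ and $\partial_t \Phi_t(z) = -\tfrac12 m_t(z)^2$. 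Taking real parts of the boundary value $z = u + \ii 0^+$, and using $\re(m_t^2) = \re(m_t)^2 - \im(m_t)^2$, yields
\[
\frac{\diff}{\diff t} L(u,t) = -\tfrac12\re\big(m_t(u+\ii 0^+)^2\big) = \frac{\im(m_t(u))^2 - \re(m_t(u))^2}{2},
\]
which is the claim.

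The main obstacle is justifying the interchange of $\diff/\diff t$ with the integral defining $L(u,t)$ (the integrand $\log|\lambda - u|$ is singular at $\lambda = u$, and $\mu_t$ has a density that may not vanish there), and more seriously justifying the passage to boundary values $\eta \downarrow 0$ in the identity $\partial_t\Phi_t(z) = -\tfrac12 m_t(z)^2$ — i.e.\ arguing that $L(u,t) = \lim_{\eta\downarrow 0}\re\Phi_t(u+\ii\eta)$ with derivatives converging too. Here I would lean on Lemma~\ref{lem:parabola_regular}, which gives that $\mu_t$ has a bounded, compactly supported density locally uniformly in $t$; this makes $L(u,t)$ locally Lipschitz (indeed $C^1$) in $(u,t)$, controls $m_t(u+\ii\eta)$ uniformly near the real axis away from $\supp\mu_t$, and (via the standard fact that boundary values of the Stieltjes transform of a Hölder/bounded-density measure are continuous up to the boundary on the real line away from atoms) lets one pass to the limit. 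An alternative that sidesteps some of this is to first prove the identity for $u$ outside $\supp\mu_t$ — where everything is real-analytic — and then extend to all $u$ by continuity of both sides in $u$, again using the regularity from Lemma~\ref{lem:parabola_regular}; since in the application \eqref{eqn:diffOptimumnew} the relevant point $u_t$ is outside $\supp\mu_t$ for $t < t_0$, this weaker version already suffices for the immediate use, but the stated lemma for all $u$ follows from the density bound.
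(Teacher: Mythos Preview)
Your approach is essentially the paper's: both use Burgers' equation \eqref{eqn:freeHeatBM} to identify an antiderivative $-\tfrac{1}{2}m_t(z)^2$ for $\partial_t$ of the log-potential, then pass to the real boundary. The packaging differs only cosmetically: you work with the complex potential $\Phi_t(z)=\int\log(z-\lambda)\mu_t(\diff\lambda)$ and fix the integration constant by sending $z\to\infty$, whereas the paper writes $\log|\lambda-(u+\ii\eta)|-\log|\lambda-(A+\ii\eta)|=-\int_A^u\frac{\lambda-x}{(\lambda-x)^2+\eta^2}\,\diff x$ and fixes the constant by sending $A\to-\infty$. These are the same idea.

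The one substantive difference is in the technical justification of the limit $\eta\downarrow 0$ and the interchange with $\diff/\diff t$. You propose relying on the bounded density from Lemma~\ref{lem:parabola_regular} (or Biane's bound $|m_t|\le t^{-1/2}$) plus continuity of boundary values. The paper instead proves a dedicated auxiliary result, Lemma~\ref{lem:diffdensity}, showing $\partial_t\mu_t(\cdot)\in L^p$ for $1<p<3/2$, and uses this with H\"older to get a dominating function for dominated convergence in both the $A\to-\infty$ and $\eta\downarrow 0$ steps. Your route can be made to work (the key point is that $-\tfrac12\re(m_t(u+\ii\eta)^2)$ converges locally uniformly in $t$ as $\eta\downarrow 0$, which follows from the uniform bound and continuity of $m_t$ up to the boundary), but be aware that the paper's $L^p$ lemma is the cleaner hammer if you need to be fully rigorous at all $u$, including inside the support.
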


For $t < t_0$ (when $\im(m_t(u_t)) = 0$), we can then use \eqref{eqn:diffOptimumnew}, Lemma \ref{lem:difflogpotential}, and \eqref{eqn:stieltjesut} to obtain 
\[
    \frac{\diff}{\diff t} F(u_t,t) = \left( \frac{\partial}{\partial t} F(u,t) \right)_{u = u_t} + \left(\frac{\partial}{\partial u} F(u,t) \right)_{u = u_t} \partial_t u_t = \left( \frac{\partial}{\partial t} F(u,t) \right)_{u = u_t} = -\frac{m_t(u_t)^2}{2} + \frac{(u_t)^2}{2t^2} = 0.
\]
Together with $F(u_t,t)\to 0$ as $t\to 0$, the above equation gives
\begin{equation}\label{eqn:valueOptimumnew}
F(u_t,t)=0
\end{equation}
for $t < t_0$.

\begin{lem}
\label{lem:softspinsConcavity}
For every measure $\mu_D$ and every $t$, the function $F(u,t)$ is concave in $u$ (possibly not strictly).
\end{lem}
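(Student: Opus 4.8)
The plan is to prove the stronger pointwise statement $\partial_u^2 F(u,t)<0$ at every $u$ outside a closed, nowhere-dense exceptional set, and then upgrade it to concavity using that $F(\cdot,t)$ is $C^1$. Write $F(u,t)=\mathrm{const}(t)+p(u)-\frac{u^2}{2t}$ with $p(u)=\int_\R\log\abs{\lambda-u}\,\mu_t(\diff\lambda)$. Since $\mu_t=\rho_{\text{sc},t}\boxplus\mu_D$ has a bounded, H\"older-continuous density (Lemma \ref{lem:parabola_regular} and \cite{Bia1997}), $p$ is locally Lipschitz with $p'(u)=-\re m_t(u+\ii0^+)$ continuous, so $F(\cdot,t)\in C^1(\R)$. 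For $u\notin\supp(\mu_t)$ the bound is immediate: $p''(u)=-\int_\R\frac{\mu_t(\diff\lambda)}{(\lambda-u)^2}<0<\frac1t$. For $u$ in the bulk, meaning where $\mu_t$ has a positive, locally real-analytic density (so $m_t$ extends real-analytically to a neighbourhood of $u$ in $\R$ and $p''(u)=-\re m_t'(u+\ii0^+)$), I use subordination; the remaining points (edges, interior zeros/cusps, the boundary of $\supp\mu_t$) form a closed set with empty interior where $p''\to-\infty$, which only helps.

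The key computation is as follows. By the Pastur relation (used already above; see \cite{Pas1972}), $m_t(z)=m_{\mu_D}(\omega(z))$ where $\omega(z)=z+t\,m_t(z)$ is the subordination function, equivalently $z=\Psi_t(\omega(z))$ with $\Psi_t(\zeta):=\zeta-t\,m_{\mu_D}(\zeta)$, so $\omega'(z)=1/\Psi_t'(\omega(z))$. Differentiating $\omega=\Id+t\,m_t$ gives the identity $m_t'(z)+\frac1t=\frac1t\,\omega'(z)$, hence on the bulk
\[
    \partial_u^2 F(u,t)=-\re m_t'(u+\ii0^+)-\tfrac1t=-\tfrac1t\,\re\omega'(u+\ii0^+)=-\tfrac1t\,\frac{\re\Psi_t'(w)}{\abs{\Psi_t'(w)}^2},
\]
where $w=\omega(u+\ii0^+)=a+\ii v$ with $v=t\,\im m_t(u+\ii0^+)=\pi t\,\mu_t(u)>0$. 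Taking imaginary parts in $0=\im z=\im\Psi_t(w)$ yields the constraint $\int_\R\frac{\mu_D(\diff\lambda)}{(\lambda-a)^2+v^2}=\frac1t$; substituting $(\lambda-a)^2-v^2=[(\lambda-a)^2+v^2]-2v^2$ into $\re\Psi_t'(w)=1-t\,\re\int_\R\frac{\mu_D(\diff\lambda)}{(\lambda-w)^2}$ and using the constraint collapses it to
\[
    \re\Psi_t'(w)=2tv^2\int_\R\frac{\mu_D(\diff\lambda)}{\bigl((\lambda-a)^2+v^2\bigr)^2}>0 .
\]
Thus $\partial_u^2 F(u,t)<0$ on the bulk, completing the pointwise bound; since $F(\cdot,t)\in C^1(\R)$ and $\partial_u^2 F(\cdot,t)<0$ on a dense open set, $F(\cdot,t)$ is concave.

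I expect the only genuine work to be bookkeeping: (i) the regularity facts about $\mu_t$ and $m_t(\cdot+\ii0^+)$ — continuity of $p'$, real-analyticity of $m_t(\cdot+\ii0^+)$ on the bulk, and the structure of the exceptional set — all of which rest on Biane's analysis \cite{Bia1997} already invoked in Lemma \ref{lem:parabola_regular}; and (ii) justifying $\partial_u m_t(u+\ii0^+)=(\partial_z m_t)(u+\ii0^+)$ at bulk points. The crux is the chain $\partial_u^2 F=-\frac1t\re\omega'$ followed by the elementary cancellation producing the manifestly positive $2tv^2\int((\lambda-a)^2+v^2)^{-2}\mu_D(\diff\lambda)$. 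Equivalently — and perhaps cleaner to phrase — the same identity shows that $u\mapsto u-t\,p'(u)=\re\omega(u+\ii0^+)$ is non-decreasing on $\R$, i.e. $u\mapsto p'(u)-u/t$ is non-increasing, which is precisely concavity of $F(\cdot,t)$; this is essentially Biane's description of $\supp\mu_t$ as a monotone image, and one could instead cite it directly.
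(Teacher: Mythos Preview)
Your bulk computation is essentially the paper's: both differentiate the Pastur relation and exploit the imaginary-part constraint $t\int|\lambda-\omega|^{-2}\,\mu_D(\diff\lambda)=1$. The paper writes $\re m_t'=\frac1t\re\frac{1}{1-tX}-\frac1t$ with $X=\int(\lambda-\omega)^{-2}\,\mu_D(\diff\lambda)$ and observes $|tX|\le1\Rightarrow\re\frac{1}{1-tX}\ge0$; you compute $\re(1-tX)=2tv^2\int\bigl((\lambda-a)^2+v^2\bigr)^{-2}\mu_D(\diff\lambda)>0$ explicitly. Same inequality, slightly different packaging.

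The genuine difference is in handling general (possibly disconnected) $\mu_D$. The paper first assumes $\supp\mu_D$ connected --- so $\supp\mu_t$ is a single interval with only two edge points to patch across --- and then reaches arbitrary $\mu_D$ by approximating with connected-support measures, invoking a separate continuity lemma for the log-potential under weak limits. You instead work directly, patching over a ``closed nowhere-dense'' exceptional set. Here you should be more careful: $F\in C^1$ together with $\partial_u^2 F<0$ on a dense open set does \emph{not} by itself force concavity (take $F'$ to be the Cantor staircase: then $F''=0$ on a dense open set, yet $F'$ is strictly increasing). What you actually need is that the exceptional set is countable, and it is --- the zero set of $\mu_t$ inside $\supp\mu_t$ is the boundary of at most countably many open intervals --- after which a standard Dini-derivative argument applies. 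Your closing remark that $u\mapsto\re\omega(u+\ii0^+)$ is non-decreasing on all of $\R$ is exactly Biane's homeomorphism $\psi_t$ read backwards; citing that directly would be the cleanest way to run your direct approach and would bypass the patching issue entirely.
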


From \eqref{eqn:diffOptimumnew}, \eqref{eqn:valueOptimumnew}, and Lemma \ref{lem:softspinsConcavity} we conclude that
\begin{equation}
\label{eqn:smalltimecomplexity}
    \Sigma^{\textup{tot}}(\mu_D,t) = \Sigma^{\text{min}}(\mu_D,t) = F(u_t,t) = 0 \quad \text{for all} \quad t < t_0.
\end{equation}

Now we study the phase $t > t_0$, showing $t_0 < \infty$ along the way, by considering the evolution of $\ell_t$. 

\begin{lem}
\label{lem:leftedgeevolution}
For all $t > 0$ we have
\[
    \partial_t \ell_t = -\re(m_t(\ell_t)).
\]
\end{lem}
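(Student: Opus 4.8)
The plan is to view the left edge as being transported along a characteristic of the free-heat (Burgers) equation \eqref{eqn:freeHeatBM}. For $z < \ell_t$ the function $m_t(z)$ is real, smooth and strictly increasing, since $\partial_z m_t(z) = \int_\R (\lambda-z)^{-2}\,\mu_t(\diff\lambda) > 0$; let $g_t$ denote its inverse, so $g_t(m_t(z)) = z$, and note $g_0 = m_{\mu_D}^{-1}$ (the functional inverse of the Stieltjes transform of $\mu_D=\mu_0$). Differentiating $m_t(g_t(m)) = m$ in $t$ and substituting \eqref{eqn:freeHeatBM} gives $\partial_t g_t(m) = -m$, hence
\[
    g_t(m) = g_0(m) - t\,m
\]
on the range of $m_t|_{(-\infty,\ell_t)}$ (extended by analytic continuation where needed); this is just the assertion that the solution of \eqref{eqn:freeHeatBM} is constant along the lines $z = g_0(m_0) - t m_0$.

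Next I would pin down the edge. By the square-root decay established in Appendix~\ref{sec:freeconvolution}, together with the matching lower bound which is classical for free convolutions with the semicircle (see \cite{Bia1997}), $\mu_t$ vanishes exactly like a square root at $\ell_t$; consequently $m_t$ extends continuously to $z=\ell_t$ from the left with finite real value $m_t(\ell_t) = \int_\R (\lambda-\ell_t)^{-1}\,\mu_t(\diff\lambda)$ and with $\partial_z m_t(z)\to +\infty$ as $z\uparrow\ell_t$. In terms of $g_t$ this says $g_t'(m)\to 0$ as $m\uparrow m_t(\ell_t)$, so, writing $m^*(t):=m_t(\ell_t)$, the identity $g_t' = g_0'-t$ forces $g_0'(m^*(t)) = t$ and $\ell_t = g_t(m^*(t)) = g_0(m^*(t)) - t\,m^*(t)$. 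Since $m_{\mu_D}$ is convex on $(-\infty,\mathtt{l}(\mu_D))$ its inverse $g_0$ is concave there, so $g_0''<0$ and the implicit function theorem yields that $m^*(t)$, hence $\ell_t$, is a smooth function of $t$. Then one simply differentiates, using $g_0'(m^*(t))=t$:
\[
    \partial_t\ell_t = g_0'(m^*(t))\,\partial_t m^*(t) - m^*(t) - t\,\partial_t m^*(t) = \big(g_0'(m^*(t)) - t\big)\,\partial_t m^*(t) - m^*(t) = -m^*(t) = -m_t(\ell_t),
\]
which is $-\re(m_t(\ell_t))$ because $m_t(\ell_t)$ is real.

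An essentially equivalent but even shorter route bypasses the inverse function via the subordination function $\omega_t(z) = z + t m_t(z)$, which satisfies $m_t(z) = m_{\mu_D}(\omega_t(z))$ (the Pastur relation used earlier). Setting $w_t := \omega_t(\ell_t) = \ell_t + t m_t(\ell_t)$, the vertical tangent of $m_t$ at $\ell_t$ translates into the critical relation $t\,m_{\mu_D}'(w_t) = 1$, while $\ell_t = w_t - t\,m_{\mu_D}(w_t)$; differentiating the latter and using the former, $\partial_t\ell_t = \partial_t w_t\,(1 - t m_{\mu_D}'(w_t)) - m_{\mu_D}(w_t) = -m_{\mu_D}(w_t) = -m_t(\ell_t)$ directly.

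I expect the main obstacle to be the rigorous passage to the edge rather than the (very short) differentiation: one must justify that $m_t$ extends to $\ell_t$ with a genuine vertical tangent — i.e.\ that the density is no flatter than a square root there — control the domain of validity of $g_t(m) = g_0(m) - t m$ right up to $m = m^*(t)$, and establish enough regularity of $t\mapsto\ell_t$ to differentiate. The one genuinely degenerate configuration, in which $\mu_D$ has such a flat left edge that no interior critical point $w_t$ exists (this cannot happen for the discrete $\mu_D$ of the elastic manifold, nor in the worked examples), would be treated separately by observing that there $w_t$ is frozen at $\mathtt{l}(\mu_D)$, so $\ell_t$ moves linearly with slope $-m_{\mu_D}(\mathtt{l}(\mu_D)^-) = -m_t(\ell_t)$, again matching the claim.
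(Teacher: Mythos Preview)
Your second route is essentially the paper's proof: the paper differentiates \eqref{eqn:PasturEdgeReal} and simplifies via \eqref{eqn:PasturEdgeImaginary}, which in your notation read $m_t(\ell_t)=m_{\mu_D}(w_t)$ and $t\,m_{\mu_D}'(w_t)=1$; your differentiation of $\ell_t=w_t-t\,m_{\mu_D}(w_t)$ is just a rearrangement of the same identity, and your first route via $g_t=g_0-tm$ is an equivalent repackaging. One small correction worth noting: the critical relation $t\,m_{\mu_D}'(w_t)=1$ is better obtained, as the paper does, by dividing the imaginary-part Pastur relation \eqref{eqn:PasturImaginary} by $\im m_t(u)>0$ for $u$ just to the \emph{right} of $\ell_t$ and then letting $u\downarrow\ell_t$ --- your derivation from a vertical tangent on the \emph{left} appeals to a ``matching lower bound'' on the edge decay that is \emph{not} in \cite{Bia1997} and can genuinely fail (the paper's own appendix cites Lee--Schnelli examples with strictly faster than square-root decay), so that step as written is not fully justified.
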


Since the density of $\mu_t$ decays to zero at its edges (in fact at least as quickly as a cube root \cite{Bia1997}), we have $\im(m_t(\ell_t)) = 0$ for all $t$. From Lemmas \ref{lem:leftedgeevolution} and \ref{lem:difflogpotential} we therefore obtain
\begin{align}
\label{eqn:diffFellt}
\begin{split}
    \frac{\diff}{\diff t} F(\ell_t,t) &= \left(\frac{\partial}{\partial u} F(u,t) \right)_{u = \ell_t} \partial_t \ell_t + \left(\frac{\partial}{\partial t} F(u,t) \right)_{u = \ell_t} \\
    &= \left(-\re(m_t(\ell_t)) - \frac{\ell_t}{t}\right)(-\re(m_t(\ell_t))) + \frac{(\im(m_t(\ell_t)))^2}{2} + \frac{1}{2}\left[ \left(\frac{\ell_t}{t}\right)^2 - (\re(m_t(\ell_t)))^2 \right] \\
    &= \frac{1}{2}\left(\frac{\ell_t}{t} + \re(m_t(\ell_t))\right)^2 = \frac{1}{2}\left[\left(\frac{\partial}{\partial u} F(u,t) \right)_{u = \ell_t} \right]^2.
\end{split}
\end{align}

To analyze this, we use the following lemma.

\begin{lem}
\label{lem:GuionnetMaida}
We have
\[
    \left(\frac{\partial}{\partial u} F(u,t)\right)_{u = \ell_t} \begin{cases} < 0 & \text{if} \quad 0 < t < t_c, \\ = 0 & \text{if} \quad t = t_c, \\ > 0 & \text{if} \quad t > t_c. \end{cases}
\]
\end{lem}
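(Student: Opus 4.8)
The plan is to compute the sign of $\left(\frac{\partial}{\partial u} F(u,t)\right)_{u = \ell_t} = -\re(m_t(\ell_t)) - \frac{\ell_t}{t}$ by relating it to the subordination function of the free convolution $\mu_t = \rho_{\text{sc},t}\boxplus\mu_D$ at the left edge. Recall that $\rho_{\text{sc},t}$ is the semicircle of variance $t$, so $\mu_t$ is a free convolution with the semicircle, and the Pastur relation gives $m_t(z) = \int \frac{\mu_D(\diff\lambda)}{\lambda - z - t\,m_t(z)}$; equivalently the subordination function is $\omega_t(z) = z + t\,m_t(z)$, and $m_t(z) = m_{\mu_D}(\omega_t(z))$ where $m_{\mu_D}$ is the Stieltjes transform of $\mu_D$. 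At $z = \ell_t$, since the density of $\mu_t$ vanishes there, $m_t(\ell_t)$ is real, and $\ell_t + t\,m_t(\ell_t) = \omega_t(\ell_t) =: \omega_t$ is real and lies below $\mathtt{l}(\mu_D)$. Thus $\frac{\partial}{\partial u}F(u,t)\big|_{u=\ell_t} = -m_t(\ell_t) - \frac{\ell_t}{t} = -\frac{\ell_t + t\,m_t(\ell_t)}{t} = -\frac{\omega_t}{t}$, so the claim reduces to showing that the real number $\omega_t = \omega_t(\ell_t)$ is positive for $0<t<t_c$, zero at $t=t_c$, and negative for $t>t_c$.

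The next step is to characterize $\omega_t$ via the edge equation for free convolution with a semicircle: the left edge $\ell_t$ is the image under the subordination map of the point $\omega$ where $\frac{\rd}{\rd\omega}\big(\omega - t\,m_{\mu_D}(\omega)\big) = 0$, i.e. where $t\,m_{\mu_D}'(\omega) = 1$; this is precisely the Guionnet--Ma\"ida type characterization of the edge (the inequality from free probability due to Guionnet and Ma\"ida mentioned in the paper, regarding the subordination function of the free convolution at the edge, together with the fact — proved in Appendix \ref{sec:freeconvolution} — that the density decays at least like a square root at its extremal edge). Since $m_{\mu_D}'(\omega) = \int \frac{\mu_D(\diff\lambda)}{(\lambda-\omega)^2}$ is strictly decreasing in $\omega$ on $(-\infty,\mathtt{l}(\mu_D))$ from $+\infty$ (as $\omega\uparrow\mathtt{l}(\mu_D)$) down to $0$ (as $\omega\to-\infty$), for each $t>0$ there is a unique such $\omega = \omega_t < \mathtt{l}(\mu_D)$, and $\omega_t$ is a continuous increasing function of $t$. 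One then reads off the sign of $\omega_t$ by evaluating the defining equation $t\int\frac{\mu_D(\diff\lambda)}{(\lambda-\omega_t)^2} = 1$ at $\omega_t = 0$: this holds exactly when $t\int\frac{\mu_D(\diff\lambda)}{\lambda^2} = 1$, i.e. when $t = t_c$. Monotonicity of $\omega_t\mapsto\int\frac{\mu_D(\diff\lambda)}{(\lambda-\omega_t)^2}$ then forces $\omega_t<0$ for $t<t_c$, $\omega_{t_c}=0$, and $\omega_t>0$ for $t>t_c$; dividing by $-t<0$ flips the sign and gives the statement (and incidentally shows $t_0 = \inf\{t: u_t = \ell_t\} = t_c$, since $\frac{\partial}{\partial u}F|_{u=\ell_t}=0$ is exactly the optimality condition \eqref{eqn:diffOptimumnew} with $u_t = \ell_t$).

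The main obstacle I expect is making rigorous the edge characterization $t\,m_{\mu_D}'(\omega_t) = 1$ and the identity $\ell_t = \omega_t - t\,m_{\mu_D}(\omega_t)$ — in particular justifying that $\omega_t(\ell_t)$ is real, strictly below $\mathtt{l}(\mu_D)$, and is genuinely the critical point of the subordination map rather than merely a stationary point on the wrong branch. This is exactly where the square-root edge decay proved in the Appendix and the Guionnet--Ma\"ida inequality enter: the square-root (rather than faster) vanishing guarantees $m_{\mu_D}'(\omega_t)$ is finite and the map $\omega\mapsto\omega - t\,m_{\mu_D}(\omega)$ has a nondegenerate minimum at $\omega_t$, so the inverse subordination function is well-defined and smooth near the edge, legitimizing all the differentiations above. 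Everything else — continuity and monotonicity of $\omega_t$ in $t$, and the sign computation at $t_c$ — is elementary calculus once this structural input is in place.
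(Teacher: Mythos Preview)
Your approach is essentially the same as the paper's: identify $\omega_t := \ell_t + t\,m_t(\ell_t)$, use the Guionnet--Ma\"ida inequality to place $\omega_t \leq \mathtt{l}(\mu_D)$, use the imaginary part of the Pastur relation at the edge to obtain the edge equation $\int \frac{\mu_D(\diff\lambda)}{(\lambda-\omega_t)^2} = \tfrac{1}{t}$, and then read off the sign of $\omega_t$ by monotonicity and comparison with the value at $\omega=0$.

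However, you have a sign slip that propagates to the wrong conclusion. The function $\omega \mapsto \int \frac{\mu_D(\diff\lambda)}{(\lambda-\omega)^2}$ is strictly \emph{increasing} on $(-\infty,\mathtt{l}(\mu_D))$ (its derivative is $2\int \frac{\mu_D(\diff\lambda)}{(\lambda-\omega)^3} > 0$ there), not decreasing as you write. Consequently $\omega_t$ is \emph{decreasing} in $t$, and since its value at $t=t_c$ is $0$ you get $\omega_t > 0$ for $t<t_c$ and $\omega_t < 0$ for $t>t_c$ --- the opposite of what you wrote. With the correct signs, $\partial_u F|_{u=\ell_t} = -\omega_t/t$ is negative for $t<t_c$ and positive for $t>t_c$, which is the lemma; with your signs the conclusion would be reversed. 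This is purely computational and easily repaired.

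A minor remark: the paper does not invoke the square-root edge decay (the Appendix) for this lemma. The edge equation $\int \frac{\mu_D(\diff\lambda)}{(\lambda-\omega_t)^2}=\tfrac{1}{t}$ is obtained directly by dividing the imaginary part of the Pastur relation by $\im m_t(\ell_t+\epsilon)>0$ for small $\epsilon>0$ and letting $\epsilon\downarrow 0$; Biane's regularity (the density is positive and analytic just above the edge) suffices. Also, the Appendix proves decay is \emph{at least} as fast as a square root, not exactly square root, so your phrase ``square-root (rather than faster)'' is not what that result gives.
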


Thus \eqref{eqn:diffFellt} is positive for $t \neq t_c$ and vanishes at $t = t_c$. This has two important consequences. First, $F(\ell_t,t)$ is a \emph{strictly} increasing function of $t$. Second, 
\begin{equation}
\label{eqn:t0tc}
    t_0 = t_c.
\end{equation}
Indeed, on the one hand, for $t < t_0$ and small $\epsilon > 0$, we have
\[
    F(\ell_t,t) < F(\ell_{t+\epsilon},t+\epsilon) \leq F(u_{t+\epsilon},t+\epsilon) = 0 = F(u_t,t),
\]
so that $\left(\frac{\partial}{\partial u} F(u,t)\right)_{u = \ell_t} \neq 0$ by concavity in $u$ (Lemma \ref{lem:softspinsConcavity}) and \eqref{eqn:diffOptimumnew}, and thus $t \neq t_c$. Hence $t_0 \leq t_c < \infty$.

On the other hand, if $t$ has the property that $\sup_{u \in \R} F(u,t) = F(\ell_t,t)$, then we have $\left(\frac{\partial}{\partial u} F(u,t)\right)_{u = \ell_t} = 0$, thus $t = t_c$. But $t_0$ has this property, now that we know it is finite, since by continuity we have
\[
    0 = \sup_{u \in \R} F(u,t_0) = F(u_{t_0},t_0) = F(\ell_{t_0},t_0).
\]
We have shown that $F(\ell_t,t)$ is a strictly increasing function which vanishes at $t_c$; thus
\begin{equation}
\label{eqn:largetimecomplexity}
    \Sigma^{\textup{tot}}(\mu_D,t) \geq \Sigma^{\textup{min}}(\mu_D,t) \geq F(\ell_t,t) > F(\ell_{t_c},t_c) = 0 \quad \text{for all} \quad t > t_c.
\end{equation}
The fact that both complexities vanish if and only if $t \leq t_c$ follows immediately from \eqref{eqn:smalltimecomplexity}, \eqref{eqn:t0tc}, and \eqref{eqn:largetimecomplexity} (the case $t = t_0$ follows from \eqref{eqn:smalltimecomplexity} by continuity).

Lemmas \ref{lem:softspinsConcavity} and \ref{lem:GuionnetMaida} combine to give \eqref{eqn:softspins_minima_formula}, as well as strict inequality in $\Sigma^{\textup{tot}}(\mu_D,t) > \Sigma^{\textup{min}}(\mu_D,t)$ for $t > t_c$. Now we prove \eqref{eqn:softspins_total_formula}. To do this, we will rely on 
Pastur's relation \cite{Pas1972}
\begin{equation}\label{eqn:PasturBM}
    m_t(z) = \int \frac{\mu_D(\diff \lambda)}{\lambda-z-tm_t(z)}.
\end{equation} 
By taking real and imaginary parts of \eqref{eqn:PasturBM}, we get for any $u \in \R$ the coupled system
\begin{align}
    \re(m_t(u)) &= \int \frac{\lambda - u - t\re(m_t(u))}{(\lambda - u - t\re(m_t(u)))^2 + t^2\im(m_t(u))^2} \, \mu_D(\diff \lambda), \label{eqn:PasturReal} \\
    \im(m_t(u)) &= t \int \frac{\im(m_t(u))}{(\lambda - u - t\re(m_t(u)))^2 + t^2\im(m_t(u))^2} \, \mu_D(\diff \lambda).\label{eqn:PasturImaginary}
\end{align}
If $v_t$ satisfies $F(v_t,t) = \sup_{u \in \R} F(u,t)$, then 
\[
    0 = \left(\frac{\partial}{\partial u} F(u,t)\right)_{u = v_t} = -\frac{v_t}{t} - \re(m_t(v_t)).
\]
We plug this into \eqref{eqn:PasturReal} and \eqref{eqn:PasturImaginary} to obtain, writing $y_t = \im(m_t(v_t))$,
\begin{align}
    -\frac{v_t}{t} &= \int \frac{\lambda}{\lambda^2 + t^2y_t^2} \, \mu_D(\diff \lambda), \label{eqn:findvt}\\
    y_t &= t \int \frac{y_t}{\lambda^2 + t^2y_t^2} \, \mu_D(\diff \lambda). \label{eqn:findyt}
\end{align}
From its definition, $y_t \geq 0$. For every $t > 0$, notice that $(u_t,0)$ is a solution to the coupled system \{\eqref{eqn:findvt}, \eqref{eqn:findyt}\}, where $u_t$ was defined in \eqref{eqn:defut}. We claim that this is the unique solution when $t \leq t_c$, but that for $t > t_c$ there is exactly one more solution, with $y_t > 0$, and that for such times this latter solution is the one corresponding to the optimizer (i.e., for $t > t_c$ the point $u_t$ is not an optimizer anymore). 

For existence and uniqueness of this second solution exactly when $t > t_c$, we note that the positive solutions $y_t$ to \eqref{eqn:findyt} are exactly the positive solutions to 
\begin{equation}
\label{eqn:softspins_yt}
    \frac{1}{t} = \int_\R \frac{\mu_D(\diff \lambda)}{\lambda^2 + t^2y_t^2},
\end{equation}
but the right-hand side of this equation is a strictly decreasing function of $y_t$, tending to zero as $y_t \to +\infty$ and tending to $\frac{1}{t_c}$ as $y_t \downarrow 0$ (which is bigger than $\frac{1}{t}$ precisely when $t > t_c$).

To verify the claim that $u_t$ is not an optimizer when $t > t_c$, it suffices to show
\begin{equation}
\label{eqn:utlesslt}
	u_t \leq \ell_t \quad \text{for all} \quad t.
\end{equation}
Indeed, since $F(u,t)$ is concave and $\left(\frac{\partial}{\partial u} F(u,t) \right))_{u = \ell_t} > 0$ in the regime $t > t_c$, \eqref{eqn:utlesslt} would imply that $u_t$ is not the optimizer of $F(\cdot,t)$ when $t > t_c$. 

To show \eqref{eqn:utlesslt}, we will show that $t \mapsto \ell_t - u_t$ is convex with a vanishing derivative at $t = t_c$, where it takes the value zero. First we claim
\begin{equation}
\label{eqn:elltconvex}
    \frac{\diff^2}{\diff t^2}(\ell_t - u_t) = \frac{\diff^2}{\diff t^2} \ell_t = \frac{\diff}{\diff t}(-m_t(\ell_t)) \geq 0
\end{equation}
for all $t$. Indeed, a simple calculation similar to the previous ones gives, for any $\epsilon > 0$,
\begin{equation}
\label{eqn:almostleftedge}
    \frac{\diff}{\diff t} m_t(\ell_t - \epsilon) = (m_t(\ell_t - \epsilon) - m_t(\ell_t))\partial_u m_t(\ell_t-\epsilon) = \int \frac{-\sqrt{\epsilon}\mu_t(\diff \lambda)}{(\lambda - (\ell_t-\epsilon))(\lambda - \ell_t)} \int \frac{\sqrt{\epsilon}\mu_t(\diff \lambda)}{(\lambda-(\ell_t-\epsilon))^2} < 0.
\end{equation}
As $\epsilon \downarrow 0$, each of the integrals on the right-hand side converges, since $\mu_t$ decays at its left edge at least as quickly as a square root by Proposition \ref{prop:freeconv_edge}, and since, for example,
\[
    \lim_{\epsilon \downarrow 0} \int_0^1 \frac{\sqrt{\epsilon}x^p}{(x+\epsilon)x} \diff x = \begin{cases} \pi & \text{if } p = 1/2, \\ 0 & \text{if } p > 1/2. \end{cases}
\]
(When $p = 1/2$, this can be integrated directly at each $\epsilon$; when $p > 1/2$, we use dominated convergence with dominating function $\frac{1}{2}x^{p-\frac{3}{2}}$.) Thus in the limit $\epsilon \downarrow 0$ we prove the existence of $\frac{\diff}{\diff t} m_t(\ell_t) \leq 0$, concluding the proof of \eqref{eqn:elltconvex}.
Since
\[
	\left[ \frac{\diff}{\diff t} (\ell_t - u_t) \right]_{t = t_c} = -\re(m_{t_c}(\ell_{t_c})) - \frac{u_{t_c}}{t_c} = -\re(m_{t_c}(\ell_{t_c})) - \frac{\ell_{t_c}}{t_c} = \left(\frac{\partial}{\partial u} F(u,t_c)\right)_{u = \ell_{t_c}} = 0
\]
with $\ell_{t_c} = u_{t_c}$, we conclude \eqref{eqn:utlesslt} and thus \eqref{eqn:softspins_total_formula}.

Next we study the degree of vanishing of $\Sigma^{\text{tot}}(\mu_D,t) = F(v_t,t)$ as $t \downarrow t_c$. First, note that $v_t$ and $y_t$ are $C^1$ functions of $t > t_c$ with the appropriate right-hand limits at criticality (namely $\lim_{t \downarrow t_c} y_t = 0$ and $\lim_{t \downarrow t_c} v_t = \ell_{t_c}$): this is proved, first by studying $y_t$ via \eqref{eqn:softspins_yt} and the implicit function theorem, then  studying $v_t$ via \eqref{eqn:findvt} using the knowledge of $y_t$. For $t > t_c$, Lemma \ref{lem:difflogpotential} gives
\[
    \frac{\diff}{\diff t} F(v_t,t) = \underbrace{\left(\frac{\partial}{\partial u} F(u,t)\right)_{u = v_t}}_{=0} \partial_t v_t + \left(\frac{\partial}{\partial t} F(u,t)\right)_{u = v_t} = \frac{\im(m_t(v_t))^2 - \re(m_t(v_t))^2}{2} + \frac{v_t^2}{2t^2} = \frac{\im(m_t(v_t))^2}{2} = \frac{y_t^2}{2}.
\]
As $t \downarrow t_c$, this tends to zero. Differentiating \eqref{eqn:softspins_yt} in $t$ to find an expression for $y_ty'_t$ and inserting it, we find 
\[
    \frac{\diff^2}{\diff t^2} F(v_t,t) = y_ty'_t = \frac{1}{2t^4\int_\R \frac{\mu_D(\diff \lambda)}{(\lambda^2+t^2y_t^2)^2}} - \frac{y_t^2}{t}.
\]
As $t \downarrow t_c$, this tends to $\frac{1}{2}t_c^{-4} \left(\int_\R \frac{\mu_D(\diff \lambda)}{\lambda^4}\right)^{-1} = \frac{1}{2} \left(\int_\R \frac{\mu_D(\diff \lambda)}{\lambda^2}\right)^{4}\left(\int_\R \frac{\mu_D(\diff \lambda)}{\lambda^4}\right)^{-1}$, which is positive. This gives us the quadratic decay and the prefactor.

Finally we study the degree of vanishing of $\Sigma^{\text{min}}(\mu_D,t) = F(\ell_t,t)$ as $t \downarrow t_c$. To do this, we first study regularity of $m_t(\ell_t)$ (we studied regularity of $\ell_t$ above, around \eqref{eqn:elltconvex}). Notice that $\im(m_t(\ell_t)) = 0$ but $\im(m_t(\ell_t+\epsilon)) > 0$ for all sufficiently small $\epsilon > 0$, since $\mu_t$ admits a density that vanishes at the endpoints and is analytic where positive \cite{Bia1997}; using this in the real and imaginary parts \eqref{eqn:PasturReal} and \eqref{eqn:PasturImaginary} of the Pastur relation, we obtain
\begin{align}
    m_t(\ell_t) &= \int \frac{1}{\lambda - \ell_t - tm_t(\ell_t)} \, \mu_D(\diff \lambda), \label{eqn:PasturEdgeReal} \\
    1 &= t \int \frac{1}{(\lambda - \ell_t - tm_t(\ell_t))^2} \, \mu_D(\diff \lambda). \label{eqn:PasturEdgeImaginary}
\end{align}
For $t > t_c$, we will show in the proof of Lemma \ref{lem:GuionnetMaida} that $\ell_t+tm_t(\ell_t) < 0$; thus $\int \frac{\mu_D(\diff \lambda)}{(\lambda-\ell_t-tm_t(\ell_t))^p} < \infty$ for all $p > 0$. This also implies, using the implicit function theorem, that $\ell_t+tm_t(\ell_t)$ is a $C^2$ function of $t > t_c$, hence so is $m_t(\ell_t)$. Differentiating \eqref{eqn:PasturEdgeImaginary} in $t$ and solving for $\frac{\diff}{\diff t}m_t(\ell_t)$, we find
\[
    \frac{\diff}{\diff t} m_t(\ell_t) = -\frac{1}{2t^3 \left(\int_\R \frac{\mu_D(\diff \lambda)}{(\lambda-\ell_t-tm_t(\ell_t))^3} \right)}.
\]
As $t \downarrow t_c$, this tends to $-\frac{1}{2}\left(\int_\R \frac{\mu_D(\diff \lambda)}{\lambda^2}\right)^3 \left(\int_\R \frac{\mu_D(\diff \lambda)}{\lambda^3}\right)^{-1}$. Now we compute derivatives: We have $F(\ell_{t_c},t_c) = 0$, by combining \eqref{eqn:diffFellt} and Lemma \ref{lem:GuionnetMaida} we find that the first derivative also vanishes at criticality. Next, from \eqref{eqn:diffFellt} and Lemma \ref{lem:leftedgeevolution} we have
\[
    \frac{\diff^2}{\diff t^2} F(\ell_t,t) = \left(\frac{\ell_t}{t} + m_t(\ell_t)\right) \left(-\frac{m_t(\ell_t)}{t} - \frac{\ell_t}{t^2} + \frac{\diff}{\diff t} m_t(\ell_t)\right).
\]
At $t = t_c$, this vanishes by Lemma \ref{lem:GuionnetMaida}. The third derivative is
\[
    \frac{\diff^3}{\diff t^3} F(\ell_t,t) = \left(\frac{\ell_t}{t} + m_t(\ell_t)\right)\left(-\frac{1}{t} \cdot \frac{\diff}{\diff t} m_t(\ell_t) + 2\frac{m_t(\ell_t)}{t^2} + 2\frac{\ell_t}{t^3} + \frac{\diff^2}{\diff t^2} m_t(\ell_t)\right) + \left(-\frac{m_t(\ell_t)}{t} - \frac{\ell_t}{t^2} + \frac{\diff}{\diff t} m_t(\ell_t)\right)^2.
\]
Since $\frac{\ell_{t_c}}{t_c} + m_{t_c}(\ell_{t_c}) = 0$, at $t = t_c$ this reduces to $\left[\left(\frac{\diff}{\diff t} m_t(\ell_t)\right)_{t = t_c}\right]^2$, which we computed above (and which is clearly nonzero). This gives the cubic decay and the prefactor, and completes the proof.
\end{proof}

\bigskip

\begin{proof}[Proof of Lemma \ref{lem:difflogpotential}]
For large (in absolute value) negative $A$ and small $\eta > 0$, by \eqref{eqn:freeHeatBM} we have
\begin{align}
\label{eqn:difflogpotential_etapositive}
\begin{split}
    &\frac{\diff}{\diff t} \int_\R (\log\abs{\lambda - (u+\ii\eta)} - \log\abs{\lambda - (A+\ii\eta)}) \mu_t(\lambda) \diff \lambda \\
    &= -\frac{\diff}{\diff t} \int_\R \int_A^u \frac{\lambda-x}{(\lambda-x)^2+\eta^2} \diff x \, \mu_t(\diff \lambda) = -\frac{\diff}{\diff t} \int_A^u \left[ \int_\R \re \frac{\mu_t(\diff \lambda)}{\lambda - (x+\ii\eta)} \right] \diff x = -\re \left[ \int_A^u \frac{\diff}{\diff t} m_t(x+\ii\eta) \diff x \right] \\
    &= -\frac{1}{2}\re\left[\int_A^u \partial_z(m_t(x+\ii\eta)^2) \diff x \right] = -\frac{\re(m_t(u+\ii\eta)^2) - \re(m_t(A+\ii\eta)^2)}{2}.
\end{split}
\end{align}
We will take $A \to -\infty$ and $\eta \downarrow 0$ in that order. After these two limits, the right-hand side of \eqref{eqn:difflogpotential_etapositive} reads
\[
    \frac{\im(m_t(u))^2 - \re(m_t(u))^2}{2}.
\]
Now we claim that
\begin{equation}
\label{eqn:softspins_dominatedconvergence}
    \lim_{A \to -\infty} \frac{\diff}{\diff t} \int_\R \log\abs{\lambda - (A+\ii\eta)} \mu_t(\lambda) \diff \lambda = 0
\end{equation}
for every $\eta > 0$. Indeed, since $\mu_t$ has mass one for all $t$ and $\mu_t(r_t) = \mu_t(\ell_t) = 0$, we have
\begin{align*}
     &\frac{\diff}{\diff t} \int_\R \log\abs{\lambda - (A+\ii\eta)} \mu_t(\lambda) \diff \lambda = \frac{\diff}{\diff t} \int_{\ell_t}^{r_t} \log\abs{\frac{-\lambda+\ii\eta}{A} + 1} \mu_t(\lambda) \diff \lambda = \int_{\ell_t}^{r_t} \log\abs{\frac{-\lambda+\ii\eta}{A} + 1} \partial_t \mu_t(\lambda)\diff \lambda.
\end{align*}
As $A \to -\infty$, the integrand on the right-hand side tends pointwise to zero, and it is bounded in absolute value by 
\[
    \abs{\partial_t\mu_t(\lambda)} \max\left\{\abs{\log\abs{\frac{-\ell_t+\ii\eta}{A_0}+1}},\abs{\log\abs{\frac{-r_t+\ii\eta}{A_0}+1}}\right\}
\]
for all $A \geq A_0 = A_0(t)$. This is integrable by Lemma \ref{lem:diffdensity} below and H\"{o}lder's inequality, so we can conclude the proof of \eqref{eqn:softspins_dominatedconvergence} by dominated convergence.

Thus as $A \to -\infty$ the left-hand side of \eqref{eqn:difflogpotential_etapositive} tends to 
\begin{align*}
    \frac{\diff}{\diff t}\int_\R \log\abs{\lambda - (u+\ii\eta)} \mu_t(\lambda) \diff \lambda = \int_{\ell_t}^{r_t} \log\abs{\lambda - (u+\ii\eta)} \partial_t \mu_t(\lambda) \diff \lambda.
\end{align*}
As $\eta \downarrow 0$, this tends to $\frac{\diff}{\diff t} \int \log\abs{\lambda-u} \mu_t(\lambda) \diff \lambda$ by dominated convergence, using for example the dominating function
\[
    \max\{-\log{\abs{\lambda-u}}, -\log(1/2), \log\sqrt{(\lambda-u)^2+1/2}\} \abs{\partial_t \mu_t(\lambda)} \mathbf{1}_{\lambda \in [\ell_t,r_t]}
\]
for $\eta^2 < 1/2$, which is integrable by Lemma \ref{lem:diffdensity} and H\"{o}lder's inequality.
\end{proof}

\begin{lem}
\label{lem:diffdensity}
The derivative $\partial_t \mu_t(\lambda)$ is in $L^p(\R)$, as a function of $\lambda$, for $1 < p < 3/2$.
\end{lem}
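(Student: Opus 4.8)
\textbf{Proof proposal for Lemma \ref{lem:diffdensity}.}

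The plan is to differentiate the free‑heat (Burgers) equation \eqref{eqn:freeHeatBM} on the boundary of the upper half‑plane and then control the finitely many singularities of the resulting expression by Biane's regularity theory. With the sign convention $m_t(z)=\int_\R(\lambda-z)^{-1}\mu_t(\diff\lambda)$ as in \eqref{eqn:freeHeatBM}, that equation (which holds throughout $\mathbb{H}$ by analyticity, not just off the support) gives $\partial_t m_t=m_t\,\partial_z m_t=\tfrac12\partial_z(m_t^2)$ on $\mathbb{H}$. Taking imaginary parts and sending $z\to\lambda+\ii 0^+$ yields, for $t>0$,
\[
\partial_t\mu_t(\lambda)=\frac1\pi\,\im\!\big(m_t(\lambda+\ii 0^+)\,\partial_z m_t(\lambda+\ii 0^+)\big)=\partial_\lambda\big(\re(m_t(\lambda+\ii 0^+))\,\mu_t(\lambda)\big)
\]
at every $\lambda$ where the right side is defined. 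The passage to the boundary is legitimate by the regularity theory of free convolution with the semicircle law \cite{Bia1997}: for $t>0$, $\mu_t=\rho_{\text{sc},t}\boxplus\mu_D$ has a bounded, compactly supported, H\"older‑continuous density (hence $\re m_t$ is bounded on $\R$), $m_t$ extends continuously to $\overline{\mathbb{H}}$, $\supp\mu_t$ is a finite union of intervals, and $(\lambda,t)\mapsto m_t(\lambda+\ii 0^+)$ is real‑analytic on the open set $\{\mu_t(\lambda)>0\}$.

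Consequently $\partial_z m_t(\lambda+\ii 0^+)$ is continuous — so $\partial_t\mu_t$ is continuous and bounded — away from the finite ``bad set'' $B_t$ consisting of the endpoints of the components of $\supp\mu_t$ together with the interior points where $\mu_t$ vanishes. Moreover, for $\lambda$ strictly outside $\overline{\supp\mu_t}$ the function $m_t$ is real and analytic, so $m_t\,\partial_z m_t$ is real there and $\partial_t\mu_t(\lambda)=0$; hence $\partial_t\mu_t$ is supported in the fixed compact set $\overline{\supp\mu_t}$. It remains to bound $\partial_z m_t(\lambda+\ii 0^+)$ near each $x_0\in B_t$. By the local structure of $\rho_{\text{sc},t}\boxplus\mu_D$ at boundary points \cite{Bia1997} (the square‑root/cubic‑cusp dichotomy), near such $x_0$ one has $m_t(z)=m_t(x_0)+c(z-x_0)^{\beta}+o(|z-x_0|^{\beta})$ with $\beta=\tfrac12$ (a genuine edge — in particular the only possibility at the two extremal edges, by Proposition \ref{prop:freeconv_edge}) or $\beta=\tfrac13$ (an internal cusp). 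In either case $|\partial_z m_t(z)|\le C|z-x_0|^{\beta-1}$, so, using boundedness of $m_t$,
\[
|\partial_t\mu_t(\lambda)|\le C\,|\lambda-x_0|^{-\max(1/2,\,2/3)}=C\,|\lambda-x_0|^{-2/3}
\]
in a neighbourhood of $x_0$. Summing the local bounds over the finitely many points of $B_t$, together with boundedness of $\partial_t\mu_t$ away from $B_t$ and its compact support, gives $\int_\R|\partial_t\mu_t(\lambda)|^p\,\diff\lambda<\infty$ exactly when $\tfrac23 p<1$, i.e. for $1\le p<\tfrac32$.

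The main obstacle is the local analysis at the boundary of $\supp\mu_t$: one must invoke (or reprove) that the density of $\rho_{\text{sc},t}\boxplus\mu_D$ vanishes at every boundary point of its support no slower than a cube root, and at the two extremal edges no slower than a square root, and that there are only finitely many such boundary points. This is precisely the content of Biane's analysis \cite{Bia1997} (supplemented, at the extremal edges, by Proposition \ref{prop:freeconv_edge}); once the uniform $|z-x_0|^{-2/3}$ bound on $\partial_z m_t$ is in hand the $L^p$ estimate is routine, and the exponent $\tfrac32$ is forced by the worst (cubic) cusp.
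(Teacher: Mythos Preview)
Your derivation of the identity $\partial_t\mu_t(\lambda)=\partial_\lambda\big(\re m_t(\lambda+\ii 0^+)\,\mu_t(\lambda)\big)$ from the Burgers equation is exactly the paper's first step; the difference is in how you pass from this identity to the $L^p$ bound.

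The paper expands the product and argues termwise: $\mu_t$ and $\re m_t$ are uniformly bounded (the latter by the global estimate $\sup_{z\in\mathbb{H}}|m_t(z)|\le t^{-1/2}$), so it suffices that $\partial_\lambda\mu_t$ and $\partial_\lambda\re m_t$ both lie in $L^p$ for $1<p<3/2$. The first comes from Biane's cube-root edge/cusp decay. The second then follows \emph{for free}: $\re m_t(\cdot+\ii 0^+)$ is, up to a constant, the Hilbert transform of $\mu_t$, the Hilbert transform commutes with $\partial_\lambda$, and it is bounded on $L^p$ for $1<p<\infty$. This harmonic-analysis step is precisely where the lower restriction $p>1$ enters in the paper's proof.

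Your pointwise route reaches the same conclusion, but the justification is not quite what you wrote. Biane does \emph{not} prove a square-root/cubic-cusp dichotomy; he proves only the cube-root \emph{upper} bound on the density, and the paper's appendix explicitly recalls examples (Lee--Schnelli) where the edge decay is strictly faster. More importantly, an upper bound $\mu_t(\lambda)\le C|\lambda-x_0|^{1/3}$ does not by itself yield a local expansion of $m_t$, nor the pointwise estimate $|\partial_z m_t(\lambda)|\le C|\lambda-x_0|^{-2/3}$: the real part of $m_t$ needs a separate argument. This is exactly what the Hilbert-transform step buys --- it converts $L^p$ control of $\partial_\lambda\im m_t$ into $L^p$ control of $\partial_\lambda\re m_t$ without any pointwise analysis near the bad points. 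Your assertion that the bad set $B_t$ is finite for an arbitrary compactly supported $\mu_D$ is also not immediate from \cite{Bia1997} and would need its own argument.
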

\begin{proof}
For $\eta > 0$, the Burgers equation \eqref{eqn:freeHeatBM} gives 
\begin{align}
\label{eqn:diffdensityetapos}
\begin{split}
    \partial_t \im(m_t(\lambda+\ii \eta)) &= \im\left(\partial_z\left(\frac{m_t(\lambda+\ii \eta)^2}{2}\right)\right) = \partial_z \big[ \re(m_t(\lambda+\ii \eta))\im(m_t(\lambda+\ii \eta)) \big] \\
    &= \big[ \partial_\lambda \re(m_t(\lambda+\ii \eta)) \big] \im(m_t(\lambda+\ii\eta)) + \big[ \partial_\lambda \im(m_t(\lambda+\ii\eta)) \big] \re(m_t(\lambda+\ii\eta)).
\end{split}
\end{align}
We now consider $\eta \downarrow 0$.  As $\mu_t$ is analytic on $\{\lambda : \mu_t(\lambda) > 0\}$ \cite[Corollary 4]{Bia1997}, if $\lambda$ is not an edge or cusp of $\mu_t$,
\[
    \lim_{\eta \downarrow 0} \partial_\lambda \im(m_t(\lambda+\ii\eta)) = \lim_{\eta \downarrow 0} \left( \pi \int_\R \frac{\eta}{(s-\lambda)^2+\eta^2} \mu'_t(s) \diff s \right) = \pi\mu'_t(\lambda).
\]
As $\mu'_t$ is compactly supported and analytic on the set where it does not vanish, this limit is locally uniform in $\lambda$. By the same argument, this local uniformity also holds for $\lim_{\eta \downarrow 0} \im(m_t(\lambda+\ii\eta)) = \pi\mu_t(\lambda)$. We argue similarly for the real part (noting that the interchange $\lim_{\eta \downarrow 0} \partial_\lambda \re(m_t(\lambda+\ii\eta)) = \partial_\lambda \re(m_t(\lambda+\ii 0^+))$ is simply a rephrasing of the fact that the Hilbert transform commutes with differentiation). Furthermore, \cite[Proposition 2, Lemma 3]{Bia1997} gives
\begin{equation}
\label{eqn:Bianeuniformbound}
    \sup_{z \in \mathbb{H}}\abs{m_t(z)} \leq \frac{1}{\sqrt{t}}.
\end{equation}
Thus the right-hand side of \eqref{eqn:diffdensityetapos} tends to $\pi\partial_\lambda[\re(m_t(\lambda+\ii 0^+))\mu_t(\lambda)]$ as $\eta \downarrow 0$, and this limit is locally uniform in $\lambda$. This justifies swapping the limit and derivative on the left-hand side of \eqref{eqn:diffdensityetapos}, and dividing through by $\pi$ we obtain
\begin{equation}
\label{eqn:diffdensity}
    \partial_t \mu_t(\lambda) =  \partial_\lambda\bigg[ \re(m_t(\lambda+\ii 0^+))\mu_t(\lambda)\bigg]
\end{equation}
for $\lambda$ not an edge or cusp of $\mu_t$. 

Now we prove the regularity claim. Since $\mu_t$ decays at most like a cube root near its edges and possible cusps \cite[Corollary 5]{Bia1997}, we have $\partial_\lambda \mu_t \in L^p(\rd\lambda)$, for any $1 \leq p < 3/2$. Since the Hilbert transform commutes with differentiation and is bounded on $L^p$ for $1 < p < \infty$, we also have $\partial_\lambda(\re(m_t(\lambda+\ii 0^+))) \in L^p(\rd\lambda)$,  for the same range of $p$ values. Expanding the derivative in \eqref{eqn:diffdensity} and using \eqref{eqn:Bianeuniformbound}, we conclude that $\partial_t \mu_t$ is in $L^p$ for $1 < p < 3/2$. 
\end{proof}

\begin{proof}[Proof of Lemma \ref{lem:softspinsConcavity}]
Assume first that $\supp(\mu_D)$ is connected.  By \cite[Proposition 3]{Bia1997}  $\supp(\mu_t)$ is connected for any $t > 0$.  By  \cite[Corollary 4]{Bia1997}  $\mu_t$ has a density that is analytic on $\{x : \mu_t(x) > 0\}$ (although it can have cusps).

Outside of $\supp(\mu_t)$, the function $F(\cdot,t)$ is concave as the sum of concave functions.  For $\mu_t(u) > 0$, we compute $\partial_{uu} F(u,t)$ below. For any $\eta=\im z>0$ by taking the imaginary part of \eqref{eqn:PasturBM} we have on the one hand
$$
\im m_t(z)= \int \frac{\mu_D(\diff \lambda)(\eta+t\im m_t(z))}{|\lambda-z-tm_t(z)|^2},
$$
i.e.
\begin{equation}\label{eqn:constantBM}
1= t\int \frac{\mu_D(\diff \lambda)}{|\lambda-z-tm_t(z)|^2},
\end{equation}
for $z=u+\ii \eta$ and $\eta=0^+$.
On the other hand, differentiation of \eqref{eqn:PasturBM} gives
$$
\re \partial_z m_t(z)=\re \frac{X}{1-tX}=\frac{1}{t}\re \frac{1}{1-tX}-\frac{1}{t}\ \ \ {\rm with}\ 
X:=\int \frac{\mu_D(\diff \lambda)}{(\lambda-z-tm_t(z))^2}.
$$
From \eqref{eqn:constantBM}, for $z=u+\ii 0^+$ we have $|tX|\leq1$ so that $\re \frac{1}{1-tX}\geq 0$.
Note that by analyticity, $\partial_z m=\partial_u \re m+\ii\partial_u\im m$, so we have proved
$$
\partial_u m_t(u)\geq -\frac{1}{t},
$$
so that
$$\frac{\partial^2}{\partial u^2} F(u,t) \leq \frac{1}{t}-\frac{1}{t}\leq 0.$$

Since $F(\cdot,t)$ is differentiable at $\ell_t$ (with derivative $-\re(m_t(\ell_t)) - \ell_t/t$) and similarly for $r_t$, this completes the proof if $\supp(\mu_D)$ is connected. In the general case, write $I$ for the convex hull of $\supp(\mu_D)$, which is necessarily an interval gapped away from zero, write $\nu_I$ for uniform measure on $I$, and consider the probability measures
$
    \mu_D^{(\epsilon)} = (1-\epsilon) \mu_D + \epsilon\nu_I.
$
We temporarily add the measure to the notation for $F(u,t)$, writing $F(u,t,\mu_D)$. We have $\mu_D^{(\epsilon)} \to \mu_D$ weakly as $\epsilon \to 0$; in particular, since $\lambda \mapsto \log(\lambda)$ is bounded and continuous on $I$, we have
\[
    \lim_{\epsilon \to 0} \int_\R \log(\lambda) \mu_D^{(\epsilon)}(\diff \lambda) = \int_\R \log(\lambda) \mu_D(\diff \lambda).
\]
Combined with Lemma \ref{lem:logpotentiallimit} below, this lets us conclude that $F(\cdot,t,\mu_D^{(\epsilon)}) \to F(\cdot,t,\mu_D)$ pointwise as $\epsilon \to 0$. Since each $\supp(\mu_D^{(\epsilon)}) = I$ is connected, $F(\cdot,t,\mu_D)$ is thus concave as the pointwise limit of concave functions.
\end{proof}

\begin{proof}[Proof of Lemma \ref{lem:leftedgeevolution}] 
Differentiating both sides of \eqref{eqn:PasturEdgeReal} in $t$ and using \eqref{eqn:PasturEdgeImaginary}, we obtain
\begin{align*}
    \frac{\diff}{\diff t} \re(m_t(\ell_t)) = \int \frac{\partial_t\ell_t + t\frac{\diff}{\diff t} \re(m_t(\ell_t)) + \re(m_t(\ell_t))}{(\lambda - \ell_t - t\re(m_t(\ell_t)))^2} \mu_D(\diff \lambda) = \frac{\partial_t\ell_t + \re(m_t(\ell_t))}{t} + \frac{\diff}{\diff t} \re(m_t(\ell_t)).
\end{align*}
(Differentiability of $m_t(\ell_t)$ was established using \eqref{eqn:almostleftedge}.) We note that the idea to study the evolution of the edge by differentiating a self-consistent equation that it satisfies also appears in the proof of \cite[Proposition 3.4]{AdhHua2020}.
\end{proof}

\begin{proof}[Proof of Lemma \ref{lem:GuionnetMaida}]
Notice that 
\[
    \left(\frac{\partial}{\partial u} F(u,t)\right)_{u = \ell_t} = -\frac{\ell_t}{t} - \re(m_t(\ell_t)).
\]
We work with the right-hand side. We claim that
\begin{equation}
\label{eqn:GuionnetMaidaMagic}
    \ell_t + t \re(m_t(\ell_t)) \leq \mathtt{l}(\mu_D).
\end{equation}
This is in fact a special case of an inequality established by Guionnet-Ma\"{i}da in the proof of \cite[Lemma 6.1]{GuiMai2020}, which says that if $\mu$ and $\nu$ are compactly supported probability measures and $\omega$ is the so-called \emph{subordination function} defined implicitly by 
\[
    \int \frac{(\mu \boxplus \nu)(\diff \lambda)}{\lambda - z} = \int \frac{\mu(\diff \lambda)}{\lambda - \omega(z)},
\]
then 
\[
    \omega(\mathtt{r}(\mu \boxplus \nu)) \geq \mathtt{r}(\mu).
\]
In our case, $\nu = \rho_{\text{sc},t}$ and $\mu = \mu_D$, so that $\mu \boxplus \nu = \mu_t$, and the Pastur relation \eqref{eqn:PasturBM} shows that the subordination function is $\omega(z) = z + tm_t(z)$. (In fact, these choices give us results about the right edge; to get \eqref{eqn:GuionnetMaidaMagic}, one should choose $\mu = -\mu_D$, the measure defined by $-\mu_D(A) = \mu_D(-A)$ for Borel $A$, then track the negative signs.) 

Combined with \eqref{eqn:PasturEdgeImaginary}, the result \eqref{eqn:GuionnetMaidaMagic} shows that $w_t = \ell_t + t\re(m_t(\ell_t))$ is a solution to the following constrained problem:
\begin{equation}
\label{eqn:wproblem}
    \begin{cases}
    \dfrac{1}{t} = {\displaystyle \int} \dfrac{\mu_D(\diff \lambda)}{(\lambda - w_t)^2}, & \\
    w_t \leq \mathtt{l}(\mu_D). &
    \end{cases}
\end{equation}
A short differential calculation shows that $f(w) = \int \frac{\mu_D(\diff \lambda)}{(\lambda - w)^2}$ is \emph{strictly} increasing for $w \leq \ell(\mu_D)$, so \eqref{eqn:wproblem} has at most one solution. Furthermore, $f(0) = \frac{1}{t_c}$; this means that the unique solution (which we showed is $\ell_t + t\re(m_t(\ell_t))$) must be positive if $0 < t < t_c$, must be zero if $t = t_c$, and must be negative if $t > t_c$.
\end{proof}

\begin{lem}
\label{lem:logpotentiallimit}
Suppose that $\mu_N$ is a sequence of probability measures, all supported on some $[a,b]$, tending weakly to some $\mu_\infty$ which is also supported on $[a,b]$. Then for every $t > 0$ and every $u \in \R$ we have 
\[
    \lim_{N \to \infty} \int_\R \log\abs{\lambda - u} (\rho_{\text{sc},t} \boxplus \mu_N)(\lambda) \diff \lambda = \int_\R \log\abs{\lambda - u} (\rho_{\text{sc},t} \boxplus \mu_\infty)(\lambda) \diff \lambda.
\]
\end{lem}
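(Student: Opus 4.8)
The plan is a standard truncation argument built on the uniform regularity of free convolution with the semicircle law. I would first pass to the free convolutions $\nu_N := \rho_{\text{sc},t} \boxplus \mu_N$ and $\nu_\infty := \rho_{\text{sc},t} \boxplus \mu_\infty$ and record two facts. First, by Biane's bound (quoted in the proof of Lemma~\ref{lem:parabola_regular}), since all $\mu_N$ and $\mu_\infty$ are supported in $[a,b]$, each $\nu_N$ and $\nu_\infty$ admits a density which is bounded by a single constant $C = C(a,b,t)$ and supported in the fixed compact interval $K := [a-2\sqrt{t}, b+2\sqrt{t}]$. Second, since weak convergence of compactly supported measures is equivalent to convergence in the L\'evy metric, and since $d_{\textup{L}}(\rho_{\text{sc},t}\boxplus\mu_N, \rho_{\text{sc},t}\boxplus\mu_\infty) \leq d_{\textup{L}}(\mu_N,\mu_\infty)$ by \cite[Proposition 4.13]{BerVoi1993}, we have $\nu_N \to \nu_\infty$ weakly.

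If $u \notin K$, the function $\lambda \mapsto \log\abs{\lambda-u}$ is continuous and bounded on $K$ and the claim is immediate from weak convergence, so the only real content is the case $u \in K$, where I would truncate the logarithmic singularity. Given $\epsilon > 0$, choose $M$ with $2Ce^{-M} < \epsilon/3$ and split $\log\abs{\lambda-u} = \max(\log\abs{\lambda-u}, -M) + \min(\log\abs{\lambda-u}+M, 0)$. The singular piece is supported on $\{\abs{\lambda-u}<e^{-M}\}$, and the uniform density bound gives
\[
	\abs{\int_\R \min(\log\abs{\lambda-u}+M, 0)\,\nu(\diff\lambda)} \leq C\int_{-e^{-M}}^{e^{-M}} \abs{\log\abs{s}+M}\,\diff s = 2Ce^{-M} < \frac{\epsilon}{3}
\]
for both $\nu = \nu_N$ and $\nu = \nu_\infty$. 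For the bounded piece, I would pick any bounded continuous $g : \R \to \R$ agreeing with $\max(\log\abs{\lambda-u}, -M)$ on $K$; since all the measures are supported in $K$, weak convergence then yields $\abs{\int \max(\log\abs{\lambda-u},-M)\,\diff\nu_N - \int\max(\log\abs{\lambda-u},-M)\,\diff\nu_\infty} < \epsilon/3$ for $N$ large. Adding the three bounds gives $\abs{\int\log\abs{\lambda-u}\,\diff\nu_N - \int\log\abs{\lambda-u}\,\diff\nu_\infty} < \epsilon$ for $N$ large, which is the claim.

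I do not expect a genuine obstacle. The only point needing care is the uniform-in-$N$ control of the logarithmic singularity, which is precisely what Biane's density bound supplies once the $\mu_N$ share a common compact support; the elementary identity $\int_0^{e^{-M}}(M+\log s)\,\diff s = -e^{-M}$ and the L\'evy-continuity of $\boxplus$ are routine and already invoked elsewhere in the paper.
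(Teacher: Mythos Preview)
Your argument is correct. Both proofs rest on the same two ingredients---Biane's uniform density/support bound for $\rho_{\text{sc},t}\boxplus\mu$ and continuity of $\boxplus$ in a weak topology---but they regularize the logarithm differently. You truncate at a fixed level, writing $\log\abs{\lambda-u}=\max(\log\abs{\lambda-u},-M)+\min(\log\abs{\lambda-u}+M,0)$, bound the singular piece by $2Ce^{-M}$ uniformly in $N$ via the density bound, and handle the bounded piece by bare weak convergence. The paper instead smooths with $\log_\eta(x)=\log\abs{x+\ii\eta}$, bounds $\int(\log_\eta-\log\abs{\cdot})\,\diff\nu$ by $C\pi\eta$, then exploits that $\log_\eta$ is $\tfrac{1}{2\eta}$-Lipschitz to control the cross term by $d_{\textup{BL}}(\nu_N,\nu_\infty)$, finally choosing $\eta=\eta_N=d_{\textup{BL}}(\mu_N,\mu_\infty)^{1/4}$. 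Your route is the more elementary of the two and suffices for the qualitative statement as written; the paper's choice yields, as a byproduct, an explicit rate of convergence in terms of $d_{\textup{BL}}(\mu_N,\mu_\infty)$, which is not needed here but is in the spirit of the quantitative estimates used elsewhere in the paper.
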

\begin{proof}
For small positive $\eta = \eta_N$ to be chosen, define $\log_\eta : \R \to \R$ by
$
    \log_\eta(x) = \log\abs{x + \ii\eta}.
$
For any probability measure $\mu$ supported on $[a,b]$, \cite[Corollaries 2, 5]{Bia1997} yields
\[
    (\rho_{\text{sc},t} \boxplus \mu)(\lambda) \leq \left(\frac{3}{4\pi^3t^2}(4+b-a)\right)^{1/3} \mathbf{1}_{a-2\sqrt{t} \leq \lambda \leq b+2\sqrt{t}}.
\]
Since$
    \int_\R \frac{\log_\eta(\lambda) - \log\abs{\lambda}}{\eta} \diff \lambda = \pi,$
we have
\[
    \abs{\int_\R \log\abs{\lambda - u} (\rho_{\text{sc},t} \boxplus \mu)(\lambda) \diff \lambda - \int_\R \log_\eta(\lambda - u) (\rho_{\text{sc},t} \boxplus \mu)(\lambda) \diff \lambda}  \leq \left(\frac{3}{4\pi^3t^2}(4+b-a)\right)^{1/3} \pi \eta
\]
which depends on $\mu$ only through $[a,b]$. On the other hand, the function $f_{u,\eta}(\lambda) = \log_\eta(\lambda - u)$ is $\frac{1}{2\eta}$-Lipschitz and bounded on $[a-2\sqrt{t},b+2\sqrt{t}]$ by 
\[
    \max\{\abs{\log(\eta)},\abs{\log_\eta(b-u+2\sqrt{t})},\abs{\log_\eta(a-u-2\sqrt{t})}\} = \abs{\log(\eta)}
\]
where the equality holds for $\eta$ sufficiently small depending on $a$, $b$, and $u$. Since combining \cite[Corollary 11.65, Theorem 11.3.3]{Dud2002} and \cite[Proposition 4.13]{BerVoi1993} gives
\[
    d_{\textup{BL}}(\rho_{\text{sc},t} \boxplus \mu_N, \rho_{\text{sc},t} \boxplus \mu_\infty) \leq 4\sqrt{d_{\textup{BL}}(\mu_N,\mu_\infty)},
\]
we bound $\abs{\int_\R \log\abs{\cdot - u} \rd(\rho_{\text{sc},t} \boxplus \mu_N)- \int_\R \log\abs{\cdot - u} \rd(\rho_{\text{sc},t} \boxplus \mu_\infty)}$ with
\begin{align*}
    & \sum_{\nu=\mu_N,\mu_\infty}\abs{\int_\R (\log\abs{\cdot - u}-\log_\eta(\cdot - u) )\rd(\rho_{\text{sc},t} \boxplus \nu)} + \abs{\int_\R \log_\eta(\cdot - u) \rd(\rho_{\text{sc},t} \boxplus \mu_N-\rho_{\text{sc},t} \boxplus \mu_\infty)}\\
    &\leq \left(\frac{3}{4\pi^3t^2}(4+b-a)\right)^{1/3} \pi \eta + \left(\frac{1}{2\eta} + \abs{\log(\eta)}\right)d_{\textup{BL}}(\rho_{\text{sc},t} \boxplus \mu_N, \rho_{\text{sc},t} \boxplus \mu_\infty) \\
    &\leq \left(\frac{3}{4\pi^3t^2}(4+b-a)\right)^{1/3} \pi \eta + \left(\frac{1}{2\eta} + \abs{\log(\eta)}\right)4\sqrt{d_{\textup{BL}}(\mu_N,\mu_\infty)},
\end{align*}
for $\eta$ sufficiently small depending on $u$. If we choose
$
    \eta = \eta_N = \big(d_{\textup{BL}}(\mu_N,\mu_\infty)\big)^{1/4},
$
which tends to zero as $N \to \infty$, this upper bound also tends to zero as $N \to \infty$.
\end{proof}


\setcounter{equation}{0}
\setcounter{thm}{0}
\renewcommand{\theequation}{A.\arabic{equation}}
\renewcommand{\thethm}{A.\arabic{thm}}
\appendix
\setcounter{secnumdepth}{0}
\section[Appendix\ \ \ Edge behavior of general free convolutions with semicircle]
{Appendix\ \ \ Edge behavior of general free convolutions with semicircle}
\label{sec:freeconvolution}

Recall the notation of Section \ref{subsec:softspins_variational_analysis} for the free convolution of a measure $\mu_D$ with the semi-circular distribution of variance $t$, and for its left edge:
$$
    \mu_t = \rho_{\text{sc},t} \boxplus \mu_D, \ \ 
    \ell_t = \mathtt{l}(\mu_t), \ \ 
    m_t(z) = \int_\R \frac{\mu_t(\diff \lambda)}{\lambda - z}.
$$
Recall also the notation $\mu_t(\cdot)$ for the density of $\mu_t$. 
The following result might be of independent interest.

\begin{prop}
\label{prop:freeconv_edge}
Any free convolution with semicircle decays at least as fast as a square root at the extremal edges, in the following sense: 
For any compactly supported measure $\mu_D$ and any $t$, there exist $c, \epsilon > 0$ such that
\[
    \mu_t(x) \leq c \sqrt{x-\ell_t} \qquad \text{for } x \in [\ell_t, \ell_t+\epsilon].
\]
\end{prop}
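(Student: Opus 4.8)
The plan is to read off the edge behaviour from the subordination structure of free convolution with the semicircle. Write $G(w)=\int_\R\frac{\mu_D(\diff\lambda)}{\lambda-w}$ for the Stieltjes transform of $\mu_D$ and, following the Pastur relation \eqref{eqn:PasturBM}, set $\omega_t(z)=z+tm_t(z)$, so that $m_t(z)=G(\omega_t(z))$ and $\omega_t$ is the functional inverse of
\[
    \Phi(w)\;:=\;w-tG(w),\qquad w\in\C\setminus\supp(\mu_D),
\]
i.e. $z=\Phi(\omega_t(z))$. On $(-\infty,\mathtt{l}(\mu_D))$ the function $\Phi$ is real analytic, with $\Phi'(w)=1-tG'(w)=1-t\int\frac{\mu_D(\diff\lambda)}{(\lambda-w)^2}$ strictly decreasing from $\Phi'(-\infty)=1$. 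A standard description of $\rho_{\text{sc},t}\boxplus\mu_D$ (\cite{Bia1997}, or directly by inverting $\Phi$) identifies the left edge as $\ell_t=\sup\{\Phi(w):w<\mathtt{l}(\mu_D)\}$, attained at $\omega_0:=\omega_t(\ell_t)=\ell_t+tm_t(\ell_t)\le\mathtt{l}(\mu_D)$ — consistent with the Guionnet--Ma{\"i}da bound \eqref{eqn:GuionnetMaidaMagic} and with $\im m_t(\ell_t+\ii0^+)=\pi\mu_t(\ell_t)=0$ (density vanishes at the extreme edge, \cite{Bia1997}).

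The generic case is $\omega_0<\mathtt{l}(\mu_D)$, i.e. the supremum is an interior critical point, so $\Phi'(\omega_0)=0$ — this is exactly \eqref{eqn:PasturEdgeImaginary}. Since then every $\lambda-\omega_0$ with $\lambda\in\supp(\mu_D)$ is positive, we get $\Phi''(\omega_0)=-tG''(\omega_0)=-2t\int\frac{\mu_D(\diff\lambda)}{(\lambda-\omega_0)^3}<0$ \emph{strictly}, and $\Phi$ is analytic in a full neighbourhood of $\omega_0$; thus $\omega_0$ is a nondegenerate critical point, $\Phi(w)-\ell_t=\tfrac12\Phi''(\omega_0)(w-\omega_0)^2(1+o(1))$, its inverse has a square-root branch point at $\ell_t$, and (picking the branch with $\im\omega_t\ge0$) $\omega_t(\ell_t+\delta)-\omega_0=O(\sqrt\delta)$ as $\delta\downarrow0$. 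Since $G$ is analytic near $\omega_0$,
\[
    \mu_t(\ell_t+\delta)=\tfrac1\pi\im m_t(\ell_t+\delta+\ii0^+)=\tfrac1\pi\im\Big[G'(\omega_0)\big(\omega_t(\ell_t+\delta)-\omega_0\big)+O\big((\omega_t(\ell_t+\delta)-\omega_0)^2\big)\Big]=O(\sqrt\delta).
\]
A branch-free rephrasing, more convenient for rigorous error control, is to separate \eqref{eqn:PasturReal}--\eqref{eqn:PasturImaginary} along $\R+\ii0^+$: writing $m_t(\ell_t+\delta+\ii0^+)=\alpha+\ii\beta$, $v:=t\beta>0$, $u:=\ell_t+\delta+t\alpha$, equation \eqref{eqn:PasturImaginary} reads $\tfrac1t=\int\frac{\mu_D(\diff\lambda)}{(\lambda-u)^2+v^2}$, which together with the edge identity $\tfrac1t=\int\frac{\mu_D(\diff\lambda)}{(\lambda-\omega_0)^2}$ forces $u-\omega_0=O(v^2)$ (by the implicit function theorem, using $\partial_uG(\omega_0,0)=G''(\omega_0)\ne0$, $\partial_v(\cdot)|_{v=0}=0$); substituting into $\ell_t+\delta=u-t\,\re G(u+\ii v)$ and using the cancellation $1-tG'(\omega_0)=0$ gives $\delta=\tfrac{t}{2}G''(\omega_0)v^2+O(v^4)$, hence $\mu_t(\ell_t+\delta)=\tfrac{v}{t\pi}\le c\sqrt\delta$ on a fixed interval $[\ell_t,\ell_t+\e]$, with $c$ essentially $\big(2/(\pi^2t^3G''(\omega_0))\big)^{1/2}$.

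The obstacle is the boundary case $\omega_0=\mathtt{l}(\mu_D)$, where $G$ — and hence $\Phi$ — need not be analytic at $\omega_0$ and the clean critical-point expansion is unavailable; this can occur only when $\mu_D$ has a light left tail (so that $\int\frac{\mu_D(\diff\lambda)}{(\lambda-\mathtt{l}(\mu_D))^2}<\infty$) and $t$ is small. I would dispatch it by the regularisation already used in the proof of Lemma~\ref{lem:softspinsConcavity}: replace $\mu_D$ by $\mu_D^{(\e)}=(1-\e)\mu_D+\e\,\nu_I$ with $\nu_I$ uniform on the convex hull $I$ of $\supp(\mu_D)$. Then $\mu_D^{(\e)}$ has a density bounded below on $I$, so $\int\frac{\mu_D^{(\e)}(\diff\lambda)}{(\lambda-\mathtt{l}(\mu_D))^2}=+\infty$, forcing $\omega_0^{(\e)}<\mathtt{l}(\mu_D)=\mathtt{l}(\mu_D^{(\e)})$; the generic case then gives $\mu_t^{(\e)}(\ell_t^{(\e)}+\delta)\le c_\e\sqrt\delta$, with $c_\e\approx\big(2/(\pi^2t^3G_{\mu_D^{(\e)}}''(\omega_0^{(\e)}))\big)^{1/2}$ \emph{bounded} as $\e\downarrow0$ (in fact $c_\e\to0$, since $\omega_0^{(\e)}\uparrow\mathtt{l}(\mu_D)$ forces $G_{\mu_D^{(\e)}}''(\omega_0^{(\e)})\to\infty$). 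One passes to the limit using $\mu_D^{(\e)}\to\mu_D$ weakly, continuity of the edge $\ell_t^{(\e)}\to\ell_t$ (stability of the subordination/MDE edge), and locally uniform convergence of the densities $\mu_t^{(\e)}\to\mu_t$ away from the edges. Alternatively, a direct local analysis of $\Phi$ near $\mathtt{l}(\mu_D)$ shows that in this degenerate case $\im\omega_t(\ell_t+\delta)$ — and hence $\mu_t(\ell_t+\delta)$ — actually decays \emph{faster} than $\sqrt\delta$ (at worst $\sqrt\delta$ divided by a logarithm), so the bound holds a fortiori; I expect the cleanest write-up to take this second route. Finally, the right edge follows by applying the statement to $-\mu_D$, and the only other work is the quantitative implicit-function estimate controlling the $O(v^4)$ remainder, which is routine given $\Phi''(\omega_0)\neq0$.
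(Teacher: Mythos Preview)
Your generic-case argument (when $\omega_0<\mathtt{l}(\mu_D)$) is correct and is a genuinely different route from the paper's. You invert the map $\Phi(w)=w-tG(w)$ near a \emph{nondegenerate} critical point ($\Phi'(\omega_0)=0$, $\Phi''(\omega_0)<0$) to read off the square-root branch of $\omega_t$ and hence of $\mu_t$. The paper instead works entirely in Biane's parametrisation $\mu_t(\psi_t(u))=v_t(u)/(\pi t)$: it bounds $\abs{\mu_t(x)\mu_t'(x)}\le \dfrac{1}{2\pi^2 t\,\abs{v_t(u)v_t'(u)}}$ via \cite{Bia1997}, then uses the explicit identity $v_t(u)v_t'(u)=\frac{\int (x-u)/((u-x)^2+v_t^2)\,\mu_D}{\int 1/((u-x)^2+v_t^2)\,\mu_D}\ge \mathtt{l}(\mu_D)-u$ to get $\abs{\mu_t\mu_t'}\le C$ near the edge; integrating $(\mu_t^2)'$ from $\ell_t$ gives $\mu_t\le\sqrt{2C}\sqrt{x-\ell_t}$. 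Your approach gives more (the exact leading constant), while the paper's is softer and avoids any inverse-function or branch bookkeeping.

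The substantive difference is how the two handle your ``boundary case'' $\omega_0=\mathtt{l}(\mu_D)$. The paper does \emph{not} split into cases: it asserts directly that the inequality $\omega_0\le\mathtt{l}(\mu_D)$ from \eqref{eqn:GuionnetMaidaMagic} is always strict (justified tersely by ``since $m_t(\ell_t)>0$''), so its $\abs{\mu_t\mu_t'}$ bound applies uniformly. Your proof, by contrast, treats the boundary case separately, and there it is incomplete. The regularisation route has a real gap: as $\epsilon\downarrow0$ the constant $c_\epsilon$ indeed improves, but the interval on which the square-root bound $\mu_t^{(\epsilon)}(\ell_t^{(\epsilon)}+\delta)\le c_\epsilon\sqrt\delta$ is valid has length governed by the radius of analyticity of $\Phi^{(\epsilon)}$ at $\omega_0^{(\epsilon)}$, which is $\abs{\omega_0^{(\epsilon)}-\mathtt{l}(\mu_D)}\to0$. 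So you cannot pass to the limit on any fixed $\delta$-interval without an additional argument controlling the bound past this shrinking window; patching with the uniform $L^\infty$ bound on $\mu_t^{(\epsilon)}$ gives a constant $C/\sqrt{\epsilon'_\epsilon}\to\infty$, which is useless. Your second route (``direct local analysis of $\Phi$ near $\mathtt{l}(\mu_D)$ shows faster-than-$\sqrt\delta$ decay'') is the right idea and consistent with the Lee--Schnelli examples the paper mentions, but you have not carried it out. If you accept the paper's strictness claim, your generic-case argument already finishes the proof; if not, the boundary case needs honest work and the regularisation shortcut will not do it.
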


On the one hand, square-root decay is of course achieved if $\mu_D = \delta_0$ (so that the free convolution is semicircle). On the other hand, Lee and Schnelli have presented a family of examples where decay at the edge is strictly faster than square root \cite[Lemma 2.7]{LeeSch2013}. Thus the above result cannot be improved. We also mention works providing sufficient conditions on $\mu_D$ for a matching lower bound, i.e., to ensure that extremal-edge decay is \emph{exactly} square root, such as \cite[Theorem 2.2]{BaoErdSch2020B} (which actually considers free convolution between two Jacobi measures, not our special case when one of them is semicircular).

This result also complements \cite[Corollary 5]{Bia1997}, which shows that decay near \emph{any} edge is at least as fast a \emph{cube} root. As Biane shows, this is in fact the correct power at a cusp when two connected components of the support merge. Thus the ``extremal'' restriction in the above proposition is necessary.

\begin{proof}
We adapt arguments of \cite{Bia1997} as follows. Biane considers the function $v_t(u) : \R \to [0,\infty)$ defined by
\[
    v_t(u) = \inf\left\{ v \geq 0 : \int_\R \frac{\mu_D(\diff x)}{(u-x)^2+v^2} \leq \frac{1}{t}\right\}
\]
and the open set $U_t = \{u \in \R : v_t(u) > 0\}$, then defines a certain homeomorphism $\psi_t : \R \to \R$ (whose exact form is not important to us now) and proves that, for all $u \in \R$, 
\[
    \mu_t(\psi_t(u)) = \frac{v_t(u)}{\pi t}.
\]

On the one hand, by \cite[Corollary 3]{Bia1997}, we have
\[
    u_t := \psi_t^{-1}(\ell_t) = \ell_t + tm_t(\ell_t).
\]
This is at most $\mathtt{l}(\mu_D)$ by \eqref{eqn:GuionnetMaidaMagic}, and in fact the inequality is strict since $m_t(\ell_t) > 0$:
\begin{equation}
\label{eqn:strictut}
    u_t < \mathtt{l}(\mu_D).
\end{equation}

On the other hand, let $x$ be such that $\mu_t(x) > 0$. Then $x = \psi_t(u)$ for some $u \in U_t$, and adapting the proofs of \cite[Proposition 4, Lemma 5]{Bia1997} we obtain
\begin{equation}
\label{eqn:freeconv_edgeestimate}
    \abs{\mu_t(x)\mu_t'(x)} \leq \frac{\abs{v_t(u)v'_t(u)}}{\pi^2t^2\psi'_t(u)} \leq \frac{\abs{v'_t(u)}}{2\pi^2 t \abs{v_t(u)}(1+v'_t(u)^2)} \leq \frac{1}{2\pi^2 t} \cdot \frac{1}{\abs{v_t(u)v'_t(u)}}.
\end{equation}
But the proof of \cite[Lemma 5]{Bia1997} shows that
\[
    v_t(u)v'_t(u) = \frac{\int_\R \frac{(x-u)}{((u-x)^2+v_t(u)^2)} \mu_D(\diff x)}{\int_\R \frac{1}{((u-x)^2+v_t(u)^2)} \mu_D(\diff x)} \geq \mathtt{l}(\mu_D) - u.
\]
For $u$ in some $[u_t,u_t+\epsilon]$ (corresponding via $\psi_t$ to $x$ in some $[\ell_t,\ell_t+\epsilon']$), this lower bound is strictly positive by \eqref{eqn:strictut}. By \eqref{eqn:freeconv_edgeestimate}, this suffices. 
\end{proof}

\addcontentsline{toc}{section}{References}
\bibliographystyle{alpha-abbrvsort}
\bibliography{complexitybib}

\begin{thebibliography}{BAMMN19}

\bibitem[AH20]{AdhHua2020}
Arka Adhikari and Jiaoyang Huang.
\newblock Dyson {B}rownian motion for general {$\beta$} and potential at the
  edge.
\newblock {\em Probab. Theory Related Fields}, 178(3-4):893--950, 2020.

\bibitem[AT07]{AdlTay2007}
Robert~J. Adler and Jonathan~E. Taylor.
\newblock {\em Random fields and geometry}.
\newblock Springer Monographs in Mathematics. Springer, New York, 2007.

\bibitem[AEKN19]{AltErdKruNem2019}
Johannes Alt, L\'{a}szl\'{o} Erd\H{o}s, Torben Kr\"{u}ger, and Yuriy Nemish.
\newblock Location of the spectrum of {K}ronecker random matrices.
\newblock {\em Ann. Inst. Henri Poincar\'{e} Probab. Stat.}, 55(2):661--696,
  2019.

\bibitem[ABA13]{AufBen2013}
Antonio Auffinger and Gerard Ben~Arous.
\newblock Complexity of random smooth functions on the high-dimensional sphere.
\newblock {\em Ann. Probab.}, 41(6):4214--4247, 2013.

\bibitem[ABA{\v{C}}13]{AufBenCer2013}
Antonio Auffinger, G\'{e}rard Ben~Arous, and Ji\v{r}\'{\i} {\v{C}}ern\'{y}.
\newblock Random matrices and complexity of spin glasses.
\newblock {\em Comm. Pure Appl. Math.}, 66(2):165--201, 2013.

\bibitem[AG20]{AufGol2020}
Antonio Auffinger and Julian Gold.
\newblock The number of saddles of the spherical $p$-spin model, 2020.
\newblock arXiv:2007.09269v1.

\bibitem[AZ20]{AufZen2020}
Antonio Auffinger and Qiang Zeng.
\newblock Complexity of high dimensional {Gaussian} random fields with
  isotropic increments, 2020.
\newblock arXiv:2007.07668v1.

\bibitem[AD19]{AzaDel2019}
Jean-Marc Aza\"{i}s and C\'eline Delmas.
\newblock Mean number and correlation function of critical points of isotropic
  {Gaussian} fields and some results on {GOE} random matrices, 2019.
\newblock arXiv:1911.02300v3.

\bibitem[AW09]{AzaWsc2009}
Jean-Marc Aza\"{\i}s and Mario Wschebor.
\newblock {\em Level sets and extrema of random processes and fields}.
\newblock John Wiley \& Sons, Inc., Hoboken, NJ, 2009.

\bibitem[BES20]{BaoErdSch2020B}
Zhigang Bao, L\'{a}szl\'{o} Erd\H{o}s, and Kevin Schnelli.
\newblock On the support of the free additive convolution.
\newblock {\em J. Anal. Math.}, 142(1):323--348, 2020.

\bibitem[BKMN20]{BasKeaMezNaj2020}
Nicholas~P. Baskerville, Jonathan~P. Keating, Francesco Mezzadri, and Joseph
  Najnudel.
\newblock The loss surfaces of neural networks with general activation
  functions, 2020.
\newblock arXiv:2004.03959v2.

\bibitem[BKMN21]{BasKeaMezNaj2021}
Nicholas~P. Baskerville, Jonathan~P. Keating, Francesco Mezzadri, and Joseph
  Najnudel.
\newblock A spin-glass model for the loss surfaces of generative adversarial
  networks, 2021.
\newblock arXiv:2101.02524v1.

\bibitem[B{\v C}NS21]{BelCerNakSch2021}
David Belius, Ji{\v r}{\'i} {\v C}ern{\'y}, Shuta Nakajima, and Marius Schmidt.
\newblock Triviality of the geometry of mixed $p$-spin spherical hamiltonians
  with external field, 2021.
\newblock arXiv:2104.06345v1.

\bibitem[BABM21]{BenBouMcK2021I}
G{\'{e}}rard Ben~Arous, Paul Bourgade, and Benjamin McKenna.
\newblock Exponential growth of random determinants beyond invariance, 2021.
\newblock Prepublication.

\bibitem[BAFK20]{BenFyoKho2020}
G{\'e}rard Ben~Arous, Yan~V. Fyodorov, and Boris~A. Khoruzhenko.
\newblock Counting equilibria of large complex systems by instability index,
  2020.
\newblock To appear in \emph{Proc. Natl. Acad. Sci. U.S.A}. arXiv:2008.00690v1.

\bibitem[BAMMN19]{BenMeiMonNic2019}
G\'{e}rard Ben~Arous, Song Mei, Andrea Montanari, and Mihai Nica.
\newblock The landscape of the spiked tensor model.
\newblock {\em Comm. Pure Appl. Math.}, 72(11):2282--2330, 2019.

\bibitem[BASZ20]{BenSubZei2020}
G\'{e}rard Ben~Arous, Eliran Subag, and Ofer Zeitouni.
\newblock Geometry and temperature chaos in mixed spherical spin glasses at low
  temperature: the perturbative regime.
\newblock {\em Comm. Pure Appl. Math.}, 73(8):1732--1828, 2020.

\bibitem[BV93]{BerVoi1993}
Hari Bercovici and Dan Voiculescu.
\newblock Free convolution of measures with unbounded support.
\newblock {\em Indiana Univ. Math. J.}, 42(3):733--773, 1993.

\bibitem[Bia97]{Bia1997}
Philippe Biane.
\newblock On the free convolution with a semi-circular distribution.
\newblock {\em Indiana Univ. Math. J.}, 46(3):705--718, 1997.

\bibitem[BD07]{BraDea2007}
Alan~J. Bray and David~S. Dean.
\newblock Statistics of critical points of gaussian fields on large-dimensional
  spaces.
\newblock {\em Phys. Rev. Lett.}, 98:150201, Apr 2007.

\bibitem[Dud02]{Dud2002}
Richard~M. Dudley.
\newblock {\em Real analysis and probability}, volume~74 of {\em Cambridge
  Studies in Advanced Mathematics}.
\newblock Cambridge University Press, Cambridge, 2002.
\newblock Revised reprint of the 1989 original.

\bibitem[Eng93]{Eng1993}
A.~Engel.
\newblock Replica symmetry breaking in zero dimension.
\newblock {\em Nucl. Phys. B}, 410(3):617--646, 1993.

\bibitem[FMM21]{FanMeiMon2021}
Zhou Fan, Song Mei, and Andrea Montanari.
\newblock T{AP} free energy, spin glasses and variational inference.
\newblock {\em Ann. Probab.}, 49(1):1--45, 2021.

\bibitem[Fis86]{Fis1986}
Daniel~S. Fisher.
\newblock Interface fluctuations in disordered systems:
  $5\ensuremath{-}\ensuremath{\epsilon}$ expansion and failure of dimensional
  reduction.
\newblock {\em Phys. Rev. Lett.}, 56:1964--1967, May 1986.

\bibitem[Fyo04]{Fyo2004}
Yan~V. Fyodorov.
\newblock Complexity of random energy landscapes, glass transition, and
  absolute value of the spectral determinant of random matrices.
\newblock {\em Phys. Rev. Lett.}, 92(24):240601, 4, 2004.

\bibitem[FB08]{FyoBou2008}
Yan~V. Fyodorov and Jean-Philippe Bouchaud.
\newblock Statistical mechanics of a single particle in a multiscale random
  potential: {P}arisi landscapes in finite-dimensional {E}uclidean spaces.
\newblock {\em J. Phys. A}, 41(32):324009, 25, 2008.

\bibitem[FLD20a]{FyoLeD2020}
Yan~V. Fyodorov and Pierre Le~Doussal.
\newblock Manifolds in a high-dimensional random landscape: Complexity of
  stationary points and depinning.
\newblock {\em Phys. Rev. E}, 101:020101, Feb 2020.

\bibitem[FLD20b]{FyoLeD2020A}
Yan~V. Fyodorov and Pierre Le~Doussal.
\newblock Manifolds pinned by a high-dimensional random landscape: {H}essian at
  the global energy minimum.
\newblock {\em J. Stat. Phys.}, 179(1):176--215, 2020.

\bibitem[FLDRT18]{FyoLeDRosTex2018}
Yan~V. Fyodorov, Pierre Le~Doussal, Alberto Rosso, and Christophe Texier.
\newblock Exponential number of equilibria and depinning threshold for a
  directed polymer in a random potential.
\newblock {\em Ann. Physics}, 397:1--64, 2018.

\bibitem[FN12]{FyoNad2012}
Yan~V. Fyodorov and Celine Nadal.
\newblock Critical behavior of the number of minima of a random landscape at
  the glass transition point and the tracy-widom distribution.
\newblock {\em Phys. Rev. Lett.}, 109:167203, Oct 2012.

\bibitem[FS07]{FyoSom2007}
Yan~V. Fyodorov and Hans-J\"{u}rgen Sommers.
\newblock Classical particle in a box with random potential: Exploiting
  rotational symmetry of replicated hamiltonian.
\newblock {\em Nucl. Phys. B}, 764:128--167, March 2007.

\bibitem[FSW07]{FyoSomWil2007}
Yan~V. Fyodorov, Hans-J\"{u}rgen Sommers, and Ian Williams.
\newblock Density of stationary points in a high dimensional random energy
  landscape and the onset of glassy behavior.
\newblock {\em {JETP} Lett.}, 85:261--266, May 2007.

\bibitem[FW07]{FyoWil2007}
Yan~V. Fyodorov and Ian Williams.
\newblock Replica symmetry breaking condition exposed by random matrix
  calculation of landscape complexity.
\newblock {\em J. Stat. Phys.}, 129(5-6):1081--1116, 2007.

\bibitem[Gia09]{Gia2009}
Thierry Giamarchi.
\newblock Disordered elastic media.
\newblock In Robert~A. Meyers, editor, {\em Encyclopedia of Complexity and
  Systems Science}, pages 2019--2038. Springer New York, New York, NY, 2009.

\bibitem[GLD97]{GiaLeD1997}
Thierry Giamarchi and Pierre Le~Doussal.
\newblock Statics and dynamics of disordered elastic systems.
\newblock In A.~P. Young, editor, {\em Spin Glasses and Random Fields}, pages
  321--356. World Scientific, 1997.

\bibitem[GM20]{GuiMai2020}
Alice Guionnet and Myl\`ene Ma\"{\i}da.
\newblock Large deviations for the largest eigenvalue of the sum of two random
  matrices.
\newblock {\em Electron. J. Probab.}, 25:Paper No. 14, 24, 2020.

\bibitem[GZ00]{GuiZei2000}
Alice Guionnet and Ofer Zeitouni.
\newblock Concentration of the spectral measure for large matrices.
\newblock {\em Electron. Comm. Probab.}, 5:119--136, 2000.

\bibitem[HFS07]{HelFarSpe2007}
J.~William Helton, Reza~Rashidi Far, and Roland Speicher.
\newblock Operator-valued semicircular elements: Solving a quadratic matrix
  equation with positivity constraints.
\newblock {\em International Mathematics Research Notices}, 2007, 01 2007.
\newblock rnm086.

\bibitem[Kac43]{Kac1943}
Mark Kac.
\newblock On the average number of real roots of a random algebraic equation.
\newblock {\em Bull. Amer. Math. Soc.}, 49(4):314--320, 04 1943.

\bibitem[LS13]{LeeSch2013}
Ji~Oon Lee and Kevin Schnelli.
\newblock Local deformed semicircle law and complete delocalization for
  {W}igner matrices with random potential.
\newblock {\em J. Math. Phys.}, 54(10):103504, 62, 2013.

\bibitem[LH57]{Lon1957}
M.~S. Longuet-Higgins.
\newblock The statistical analysis of a random, moving surface.
\newblock {\em Philos. Trans. Roy. Soc. London Ser. A}, 249:321--387, 1957.

\bibitem[McK21a]{McK2021}
Benjamin McKenna.
\newblock Complexity of bipartite spherical spin glasses, 2021.
\newblock Prepublication.

\bibitem[McK21b]{McK2019}
Benjamin McKenna.
\newblock {Large deviations for extreme eigenvalues of deformed Wigner random
  matrices}.
\newblock {\em Electronic Journal of Probability}, 26(paper 34):1 -- 37, 2021.

\bibitem[MP91]{MezPar1991}
Marc M{\'e}zard and Giorgio Parisi.
\newblock Replica field theory for random manifolds.
\newblock {\em J. Phys. I France}, 1(6):809--836, 1991.

\bibitem[MP92]{MezPar1992}
Marc M{\'e}zard and Giorgio Parisi.
\newblock Manifolds in random media: two extreme cases.
\newblock {\em J. Phys. I France}, 2(12):2231--2242, 1992.

\bibitem[Pas72]{Pas1972}
Leonid~A. Pastur.
\newblock The spectrum of random matrices.
\newblock {\em Teoret. Mat. Fiz.}, 10(1):102--112, 1972.

\bibitem[Ric44]{Ric1944}
Stephen~O. Rice.
\newblock Mathematical analysis of random noise.
\newblock {\em Bell System Tech. J.}, 23:282--332, 1944.

\bibitem[Ric45]{Ric1945}
Stephen~O. Rice.
\newblock Mathematical analysis of random noise.
\newblock {\em Bell System Tech. J.}, 24:46--156, 1945.

\bibitem[Ros20]{Ros2020}
Valentina Ros.
\newblock Distribution of rare saddles in the {$p$}-spin energy landscape.
\newblock {\em J. Phys. A}, 53(12):125002, 53, 2020.

\bibitem[RBABC19]{RosBenBirCam2019}
Valentina Ros, Gerard Ben~Arous, Giulio Biroli, and Chiara Cammarota.
\newblock Complex energy landscapes in spiked-tensor and simple glassy models:
  Ruggedness, arrangements of local minima, and phase transitions.
\newblock {\em Phys. Rev. X}, 9:011003, Jan 2019.

\bibitem[RBC19]{RosBirCam2019}
Valentina Ros, Giulio Biroli, and Chiara Cammarota.
\newblock Complexity of energy barriers in mean-field glassy systems.
\newblock {\em {EPL} (Europhysics Letters)}, 126(2):20003, may 2019.

\bibitem[Sch38]{Sch1938}
Isaac~J. Schoenberg.
\newblock Metric spaces and completely monotone functions.
\newblock {\em Ann. of Math. (2)}, 39(4):811--841, 1938.

\bibitem[Sub17]{Sub2017}
Eliran Subag.
\newblock The complexity of spherical {$p$}-spin models---a second moment
  approach.
\newblock {\em Ann. Probab.}, 45(5):3385--3450, 2017.

\bibitem[Voi86]{Voi1986}
Dan Voiculescu.
\newblock Addition of certain noncommuting random variables.
\newblock {\em J. Funct. Anal.}, 66(3):323--346, 1986.

\bibitem[Yag57]{Yag1957}
Akiva~M. Yaglom.
\newblock Certain types of random fields in {$n$}-dimensional space similar to
  stationary stochastic processes.
\newblock {\em Teor. Veroyatnost. i Primenen}, 2:292--338, 1957.

\end{thebibliography}

\end{document}